\documentclass[10pt,a4paper,bibliography=totoc,oneside,reqno]{amsart}

\usepackage[utf8]{inputenc}
\usepackage[english]{babel}

\usepackage{amsmath,amssymb,amsthm}
\usepackage{bbm}
\usepackage{tikz-cd}
\usepackage{mathdots}
\usepackage{cancel}

\usepackage[totalwidth=14cm,totalheight=21.5cm]{geometry}
\usepackage{hyperref}
\usepackage{lipsum,xpatch}

\usepackage[backend=bibtex,
style=alphabetic,
sorting=nyt,
backref,
backrefstyle=none,
maxnames=4,
maxbibnames=999]{biblatex}
\addbibresource{PMC_biblio}

\theoremstyle{plain}
\newtheorem{thm}{Theorem}[section]
\newtheorem*{thm*}{Theorem}
\newtheorem{pro}[thm]{Proposition}
\newtheorem{cor}[thm]{Corollary}
\newtheorem{lem}[thm]{Lemma}

\makeatletter
\newtheorem*{rep@theorem}{\rep@title}
\newcommand{\newreptheorem}[2]{%
	\newenvironment{rep#1}[1]{%
		\def\rep@title{#2 \ref{##1}}%
		\begin{rep@theorem}}%
		{\end{rep@theorem}}}
\makeatother

\newtheorem{thmx}{Theorem}
\newreptheorem{thmx}{Theorem}

\newcommand{\Scal}{\mathrm{Scal}}

\newtheorem{prox}[thmx]{Proposition}
\newreptheorem{prox}{Proposition}

\newreptheorem{lemx}{Lemma}

\newreptheorem{corx}{Corollary}

\newtheorem{proxx}{Theorem}
\newreptheorem{proxx}{Theorem}

\theoremstyle{remark}
\newtheorem{rem}[thm]{Remark}
\newtheorem{de}[thm]{Definition}

\makeatletter
\newcounter{step}
\xpretocmd{\proof}{\setcounter{step}{0}}{}{}
\newcommand{\step}[1]{%
	\par
	\addvspace{\medskipamount}%
	\stepcounter{step}%
	\noindent\emph{Step \thestep: #1.}\par\nobreak\smallskip
	\@afterheading
}
\makeatother

\newcommand{\R}{\mathbb{R}}
\newcommand{\N}{\mathbb{N}}
\newcommand{\hyp}{\mathbb{H}}
\newcommand{\hpq}{\hyp^{p,q}}
\newcommand{\sph}{\mathbb{S}}
\newcommand{\proj}{\mathbb{P}}

\newcommand{\sff}{\mathrm{I\!I}}
\newcommand{\A}{\mathrm{A}}
\newcommand{\AZ}{\mathsf{A}_0}
\newcommand{\Q}{\mathsf{Q}}
\newcommand{\Einpq}{\mathsf{Ein}^{p,q}}

\renewcommand{\P}{\mathcal{P}}
\newcommand{\T}{\mathsf{T}}
\newcommand{\No}{\mathsf{N}}
\newcommand{\tr}{\mathrm{trace}}
\newcommand{\V}{\mathbf{V}}
\newcommand{\ATN}[1]{\mathcal{A}^1({#1},\mathrm{Hom}(\T {#1},\No {#1}))}
\newcommand{\OTN}[1]{\Omega^1({#1},\mathrm{Hom}(\T {#1},\No {#1}))}
\newcommand{\scal}[2]{\langle #1,#2\rangle}
\newcommand{\X}{\mathcal{X}}
\newcommand{\nb}{{\bar{\nabla}}}
\newcommand{\dn}{{\mathrm{d}_\nb}^{} }
\newcommand{\dns}{{\mathrm{d}_\nb}^{*}}

\renewcommand{\leq}{\leqslant}
\renewcommand{\le}{\leqslant}
\renewcommand{\ge}{\geqslant}
\renewcommand{\geq}{\geqslant}

\DeclareMathOperator{\ric}{Ric}

\title[Scalar curvature of maximal submanifolds in $\hyp^{p,q}$]{On the scalar curvature of complete maximal spacelike submanifolds in pseudo-hyperbolic spaces}
\author{Alex Moriani}
\address{Laboratoire J.A. Dieudonn\'e\\
Université Côte d'Azur\\
France}
\email{amoriani@unice.fr}
\author{Enrico Trebeschi}
\address{Laboratoire J.A. Dieudonn\'e\\
Université Côte d'Azur\\
France}
\email{enrico.trebeschi@univ-cotedazur.fr}
\date{}

\thanks{A.~Moriani and E.~Trebeschi acknowledge funding by the European Research Council under ERC-Advanced grant 101095722 AnSur (Geometric Analysis and Surface Groups), ERC-consolidator grant 101124349 GENERATE (GeomEtry and aNalysis for $(G, X)-$structurEs and their
defoRmATion spacEs). Views and opinions expressed are however those of the author(s) only and do not necessarily reflect those of the European Union or the European Research Council Executive Agency. Neither the European Union nor the granting authority can be held responsible for them. This collaboration have been partially funded by ANR JCJC grant GAPR (ANR-22-CE40-0001, Geometry and Analysis in the Pseudo-Riemannian setting), and PRIN F53D23002800001.}

\begin{document}
\begin{abstract}
    We study in this article the curvature of complete maximal spacelike submanifolds of in pseudo-hyperbolic spaces. We show that the scalar curvature of these submanifolds is nonpositive. This gives, together with a result of Ishihara, a sharp bound on the scalar curvature of complete maximal $p$-submanifolds of $\hpq$. We show that achieving such bound at a point is equivalent to achieving it identically, and we explicitly describe the submanifolds achieving the bound. When $q=1$, we deduce a sharp upper bound on the Ricci curvature of complete maximal hypersurfaces in Anti-de Sitter space, and characterize the hypersurfaces achieving it.
    
    Finally, we discuss the link between scalar curvature and Gromov-hyperbolicity for complete maximal spacelike $p-$submanifolds in $\mathbb{H}^{p,q}$.\end{abstract}
\maketitle

\tableofcontents

\section{Introduction}
The \textit{pseudo-hyperbolic space} is a geodesically complete pseudo-Riemanninan manifold of sectional curvature $-1$. In this article, we consider the \textit{quadric model} for the pseudo-hyperbolic space, denoted by $\hyp^{p,q}$, namely the space
\[\hpq:=\{x\in\V\ |\ \Q(x)=-1\}\ ,\]
where $\V$ is a vector space endowed with a non-degenerate quadratic form $\Q$ of signature $(p,q+1)$. The space $\hpq$ is equipped with the metric induced by restricting $\Q$ to $\T\hyp^{p,q}$, whose fiber over a point $x$ of $\hpq$ equals $x^{\perp_\Q}$.

A submanifold $M$ is \textit{spacelike} if the induced metric is Riemannian, and $M$ has \textit{parallel mean curvature} (PMC) if its mean curvature $H$ is parallel with respect to the connection on the normal bundle of $M$. In particular, $M$ is \emph{maximal} if $H$ identically vanishes.

The study of PMC submanifolds is a classic problem in differential geometry, since they are solution of a nice variational problem, involving an area-like operator. More recently, it has captured the interest of geometric topologists in the contest of \textit{higher higher Teichm\"uller theory} and the theory of \textit{Anosov representations}, as we will explain in the following.

\subsection{Main results} Our first result consists in giving a sharp upper bound for scalar curvature of complete maximal spacelike $p-$submanifolds in $\hyp^{p,q}$, together with a complete classification of the $p-$submanifolds achieving the bound.

The submanifolds achieving the bound are \textit{product submanifolds} (see Definition~\ref{de:barbot}), which are a particular class of spacelike complete PMC $p-$submanifolds of $\hyp^{p,q}$: they are embeddings of $c_1\hyp^{n_1}\times\dots\times c_k\hyp^{n_k}$ in $\hyp^{p,q}$ for $k\le q+1$, $c_i>0$ and $n_1+\dots+n_k=p$.

Among them, \textit{pseudo-flats} (Definition~\ref{de:pseudoflat}) play an important role: they are are product submanifolds characterized by the fact that all $n_i$ equal 1. The name comes from the fact that they are orbits by the action of the Cartan subgroups of $\mathrm{Isom}\hyp^{p,q}$, as shown in Subsection~\ref{sub:cartan}), hence they generalize maximal flats in symmetric spaces. Moreover, they are isometric copies of $\R^p$ inside $\hyp^{p,q}$, hence flat.

\begin{proxx}\label{pro:scal}
Let $M$ be a complete maximal spacelike $p-$submanifold in $\hyp^{p,q}$, then \[\Scal_M\leq 0.\] Moreover, if the bound is achieved at one point, then $M$ is a pseudo-flat.
\end{proxx}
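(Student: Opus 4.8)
The strategy is to compute the scalar curvature via the Gauss equation and then exploit maximality together with a maximum-principle (or completeness/Omori–Yau) argument. First I would write out the Gauss equation for a spacelike $p$-submanifold $M \subset \hyp^{p,q}$: since the ambient space has constant sectional curvature $-1$, for an orthonormal frame $e_1,\dots,e_p$ of $\T M$ one gets
\[
\Scal_M = -p(p-1) + p^2\,|H|^2 - \|\sff\|^2,
\]
where $\sff$ is the second fundamental form and $\|\sff\|^2 = \sum_{i,j}\scal{\sff(e_i,e_j)}{\sff(e_i,e_j)}$ — but here one must be careful: the normal bundle is \emph{negative definite} (signature $(p,q)$ ambient minus the $-1$ direction leaves the normal directions with the $q$ negative signs), so $\scal{\cdot}{\cdot}$ restricted to $\No M$ is negative definite and $\|\sff\|^2 \le 0$. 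For a maximal submanifold $H \equiv 0$, so $\Scal_M = -p(p-1) - \sum_{i,j}\scal{\sff(e_i,e_j)}{\sff(e_i,e_j)}$, and since $-\scal{\cdot}{\cdot}$ is positive definite on $\No M$, the term $-\sum_{i,j}\scal{\sff(e_i,e_j)}{\sff(e_i,e_j)} \ge 0$. This immediately gives $\Scal_M \le -p(p-1) \le 0$, which is even a little stronger than the stated bound; I would double-check the sign conventions since the paper may normalize the second fundamental form's sign differently, but the essential point is that maximality kills the mean-curvature term and the remaining Hessian-type term has a definite sign.

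Next, for the rigidity statement: suppose $\Scal_M(x_0) = 0$ at some point $x_0$. From the formula above this forces $p(p-1) = 0$ — so either $p=1$, in which case $M$ is a curve and the statement that it is a ``pseudo-flat'' (an isometric copy of $\R$) should follow from completeness plus the maximal/geodesic condition — or, if $p \ge 2$, the vanishing is impossible pointwise unless I have the normalization of $\Scal$ off by a factor, in which case the honest statement is that $\Scal_M \le 0$ with equality forcing $\sff \equiv 0$ at $x_0$ \emph{and} $p(p-1)$ contributing. Let me reconsider: the correct reading is presumably that the sharp bound involves the normalized scalar curvature or that Ishihara's complementary bound is $\Scal_M \ge -p(p-1)$, and the content of Proposition A is the \emph{upper} bound $\Scal_M \le 0$ together with the borderline case. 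Under that reading, $\Scal_M = 0$ at $x_0$ forces $\sum_{i,j}\scal{\sff(e_i,e_j)}{\sff(e_i,e_j)} = -p(p-1)$ at $x_0$, and combined with a global argument one wants to conclude $M$ is totally geodesic in a suitable sense and splits as a pseudo-flat. The cleanest route: show that $\Scal_M \le 0$ holds with the term structure above, then use the \textbf{strong maximum principle} — the function $u = \Scal_M$ (or a suitable multiple of $\|\sff\|^2$) satisfies an elliptic differential inequality of the form $\Delta u \ge f(u)$ with $f(0) = 0$ coming from a Simons-type identity for maximal submanifolds; attaining the interior maximum value $0$ then propagates to $\Scal_M \equiv 0$ on all of $M$, forcing $\|\sff\|^2$ to be constant and the second fundamental form to be parallel.

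The main obstacle, and the step requiring the most care, is the passage from pointwise equality to global rigidity and then to the explicit identification with a pseudo-flat. Establishing the Simons-type Bochner formula $\Delta \|\sff\|^2 = \|\nb \sff\|^2 + (\text{curvature terms})$ in the pseudo-Riemannian setting — where the normal bundle is Lorentzian/negative-definite — demands bookkeeping of signs that is genuinely delicate, since the naive Riemannian Simons identity can flip sign. Once one has the right inequality $\Delta\|\sff\|^2 \ge c\,\|\sff\|^2\,(\text{something})$, completeness of $M$ (which, combined with $\Scal_M \le 0$ and maximality, should give a controlled geometry) lets one invoke either the classical strong maximum principle on the compact-looking level set or an Omori–Yau argument if $M$ is noncompact. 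Finally, with $\sff$ parallel and $\|\sff\|^2$ extremal, I would decompose $\T M$ into the eigenspaces of the shape operators, show the distribution integrates to a product of factors each of which is a complete totally umbilic spacelike hypersurface of constant curvature, and match this with Definition~\ref{de:pseudoflat} — the equality case in the scalar-curvature bound should precisely force each factor $n_i = 1$ and the curvature of each factor to be $0$, i.e.\ a pseudo-flat. The identification of the constants $c_i$ and the count $k \le q+1$ should then be forced by the signature constraint on how many mutually orthogonal negative-norm normal directions are available, which is where the ambient signature $(p,q+1)$ enters decisively.
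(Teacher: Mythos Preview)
Your first paragraph contains a genuine error that propagates through the plan. With the paper's positive-definite convention $\scal{\cdot}{\cdot}=g_\T\oplus(-g_\No)$, tracing the Gauss equation for a maximal submanifold gives
\[
\Scal_M=-p(p-1)+\|\sff\|^2,\qquad \|\sff\|^2\ge 0,
\]
so the Gauss equation yields the \emph{lower} bound $\Scal_M\ge -p(p-1)$, not the upper bound $\Scal_M\le 0$. Your attempt to squeeze an upper bound out of the sign of the normal metric is backwards; there is no elementary pointwise argument here. The inequality $\Scal_M\le 0$ is equivalent to $\|\sff\|^2\le p(p-1)$, which is the actual content of the proposition and requires analytic work.

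You eventually gesture at the right mechanism --- a Simons/Bochner-type inequality $\Delta u\ge f(u)$ --- but leave the key step unspecified. What is needed is precisely the refined Bochner formula (Proposition~\ref{proposition formula}): every term on the right-hand side of Equation~\eqref{eq:II} is a square except $-cp\,\Scal=p\,\Scal$ (for $c=-1$), giving $\Delta\Scal_M\ge 2p\,\Scal_M$. This is not a bookkeeping triviality: the manipulation that converts the Weitzenb\"ock curvature term $\scal{S(\A)}{\A}$ into a sum of squares plus $p\,\Scal$ is the heart of the argument, and distinguishes it from Ishihara's Simons-type computation, which yields a different inequality with codimension-dependent constants.

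Finally, your plan for globalizing (``Omori--Yau if $M$ is noncompact'') is incomplete. The paper avoids Omori--Yau entirely: it uses the compactness of the space of \emph{pointed} maximal $p$-submanifolds under the action of $\mathrm{O}(p,q+1)$ (Proposition~\ref{pro:compact}, from \cite{sst23}) to guarantee that $(M,x)\mapsto\Scal_M(x)$ attains a genuine maximum on some $(M_0,x_0)$, after which the classical strong maximum principle applies directly. Your rigidity sketch (parallel $\sff$, flat normal bundle, eigenspace decomposition) is on the right track and matches Proposition~\ref{pro:parallel}, but you reach it only after the analytic inequality is in hand.
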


By tracing twice \ref{Gauss equation}, one recovers an important equation relating the norm of the second fundamental form with the scalar curvature:
\begin{equation}\label{eq:trace}
    \Scal_M=-p(p-1)-\|H\|^2+\|\sff_M\|^2,
\end{equation}
for $H$ the mean curvature of $M$. This allows to give an interpretation of Theorem~\ref{pro:scal} in terms of the extrinsic geometry of maximal $p-$submanifolds.

\begin{proxx}\label{pro:II}
Let $M$ be a complete maximal spacelike $p-$submanifold in $\hyp^{p,q}$, then \[\|\sff_M\|^2\le p(p-1).\] Moreover, if the bound is achieved at one point, then $M$ is a pseudo-flat.
\end{proxx}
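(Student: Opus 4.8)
The plan is to read off Proposition~\ref{pro:II} directly from Theorem~\ref{pro:scal} by means of the trace identity \eqref{eq:trace}. Since $M$ is maximal, its mean curvature $H$ vanishes identically, so \eqref{eq:trace} reduces to the pointwise equality
\[
\|\sff_M\|^2 = \Scal_M + p(p-1)
\]
on all of $M$. Consequently the inequality $\|\sff_M\|^2 \le p(p-1)$ holds at a point if and only if $\Scal_M \le 0$ there, and the first assertion of Proposition~\ref{pro:II} is literally the first assertion of Theorem~\ref{pro:scal} rewritten through this identity.

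For the rigidity clause, suppose $\|\sff_M\|^2 = p(p-1)$ at some $x \in M$. The displayed equality then forces $\Scal_M(x) = 0$, i.e.\ the scalar curvature of $M$ attains the bound $0$ of Theorem~\ref{pro:scal} at $x$; the rigidity part of that theorem yields that $M$ is a pseudo-flat. Conversely, a pseudo-flat is an isometric copy of $\R^p$ inside $\hpq$ (see the discussion following Definition~\ref{de:pseudoflat}), hence flat, so $\Scal \equiv 0$ and the identity above gives $\|\sff\|^2 \equiv p(p-1)$; this records that the bound is sharp and the characterization non-vacuous.

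There is essentially no obstacle to overcome here: all the analytic substance — the argument via completeness and the classification of the equality case — has already been carried out in the proof of Theorem~\ref{pro:scal}. The only point requiring care is that \eqref{eq:trace} is a purely algebraic consequence of tracing the Gauss equation twice, and that to put it in the stated form one uses nothing beyond $H \equiv 0$; in particular the identity itself needs no completeness hypothesis, which enters only through the bound $\Scal_M \le 0$ that we feed into it.
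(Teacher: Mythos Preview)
Your proposal is correct and matches the paper's own approach exactly: the paper explicitly states that the proofs of Theorem~\ref{pro:scal} and Theorem~\ref{pro:II} are equivalent via the traced Gauss identity $\Scal=-p(p-1)+\|\sff\|^2-\|H\|^2$, and then only proves Theorem~\ref{pro:scal}. Your write-up is slightly more detailed (you also note the converse direction showing sharpness), but the substance is identical.
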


An interesting consequence of Theorem~\ref{pro:scal} and Theorem~\ref{pro:II} is that both the intrinsic and extrinsic geometry of complete maximal spacelike $p-$submanifolds are uniformly bounded by quantities depending only on the dimension $p$ and not on the timelike codimension $q$. In a broad sense, this result confirms the intuition behind \cite[Theorem~6.5]{lt23} and \cite[Theorem~1.1]{mazviag}, where the intrinsic and the extrinsic distance on maximal $p-$submanifolds differ by constants only depending on the spacelike dimension $p$.

Moreover, our result proves that the $p-$submanifolds achieving the bound are contained in a totally geodesic copy of $\hyp^{p,p-1}$ inside $\hyp^{p,q}$.

We recall that Ishihara provides a different bound for the second fundamental form of complete spacelike maximal $p-$submanifolds in $\hyp^{p,q}$:
\begin{thm*}[{\cite[Theorem 1.2 and 1.3]{ish88}}]
    Let $M$ be a complete maximal spacelike $p-$submanifold in $\hyp^{p,q}$, then \[\|\sff_M\|^2\le pq.\] Moreover, if the bound is achieved at one point, then $M$ is a product submanifold with $q+1$ factors.
\end{thm*}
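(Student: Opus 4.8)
The plan is to obtain the bound from a Simons-type Bochner identity for $S:=\|\sff_M\|^2$ fed into the Omori--Yau generalized maximum principle, and to read off the rigidity from its equality case. I would first check that Omori--Yau applies. Since $\hpq$ has constant curvature $-1$, the Codazzi equation makes $(\nabla_X\sff)(Y,Z)$ symmetric in $X,Y$, and the Gauss equation (cf.\ \ref{Gauss equation}) together with maximality $\tr_M\sff=0$ gives, for a unit tangent $v$,
\[\ric_M(v,v)=-(p-1)+\sum_j\|\sff(v,e_j)\|^2\ \geq\ -(p-1),\]
so $\ric_M$ is bounded below; and \eqref{eq:trace} with $H=0$ reads $S=\Scal_M+p(p-1)$, which is bounded above by $p(p-1)$ thanks to Theorem~\ref{pro:scal}. (If $q\ge p-1$ then $p(p-1)\le pq$ and the statement already follows from Theorem~\ref{pro:scal}; so one may assume $q\le p-1$, and then $S\le p(p-1)$ at least guarantees $S$ is bounded.) Hence Omori--Yau is available for $S$.

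Next I would derive the Bochner formula. In a local orthonormal frame $(e_\alpha)_{\alpha=1}^q$ of the negative-definite normal bundle, with shape operators $A_\alpha$ (so $S=\sum_\alpha\tr(A_\alpha^2)$), the standard manipulation --- differentiate Codazzi, commute derivatives via the Ricci identity, insert the Gauss equation and $\tr A_\alpha=0$ --- gives
\[\tfrac12\Delta S=\|\nabla\sff\|^2-pS+\sum_{\alpha,\beta}\big(\tr(A_\alpha A_\beta)\big)^2+\sum_{\alpha,\beta}\big\|[A_\alpha,A_\beta]\big\|^2.\]
The curvature term carries a minus sign because the ambient curvature is $-1$, while the two quadratic terms are nonnegative precisely because the normal metric is negative-definite --- this is exactly where spacelikeness enters, the same terms being problematic in the Riemannian minimal case. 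The matrix $B_{\alpha\beta}:=\tr(A_\alpha A_\beta)$ is the Gram matrix of $A_1,\dots,A_q$ in the Hilbert--Schmidt product, hence positive semidefinite with $\tr B=S$, so $\sum_{\alpha,\beta}B_{\alpha\beta}^2=\|B\|^2\ge(\tr B)^2/q=S^2/q$. Dropping $\|\nabla\sff\|^2$ and the commutator sum yields
\[\tfrac12\Delta S\ \ge\ \frac{S}{q}\,(S-pq).\]
Omori--Yau then furnishes $(x_k)$ with $S(x_k)\to S_0:=\sup_M S$ and $\limsup_k\Delta S(x_k)\le0$; letting $k\to\infty$ gives $0\ge\frac{S_0}{q}(S_0-pq)$, and since $S_0\ge0$ this forces $S_0\le pq$, i.e.\ $\|\sff_M\|^2\le pq$ everywhere.

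For the rigidity, suppose $S(x_0)=pq$. Then $x_0$ is a global maximum, so $\Delta S(x_0)\le0$, while the identity at $x_0$ reads $0\ge\tfrac12\Delta S(x_0)=\|\nabla\sff(x_0)\|^2+\sum_{\alpha,\beta}\|[A_\alpha,A_\beta]\|^2(x_0)+(\|B\|^2-pS)(x_0)$ with $(\|B\|^2-pS)(x_0)\ge S^2/q-pS=0$; hence all three terms vanish at $x_0$, so $\nabla\sff(x_0)=0$, the $A_\alpha$ commute at $x_0$, and $B(x_0)=\tfrac{S}{q}I_q=p\,I_q$. To propagate this, put $u:=pq-S\ge0$: the inequality above gives $\Delta u\le\tfrac{2(pq-u)u}{q}\le2p\,u$, so $u\ge0$ solves $\Delta u-2p\,u\le0$ and vanishes at the interior point $x_0$, whence the strong maximum principle forces $u\equiv0$. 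Thus $S\equiv pq$, and retracing the equalities, $\nabla\sff\equiv0$ with the $A_\alpha$ simultaneously diagonalizable and $\tr(A_\alpha A_\beta)=p\,\delta_{\alpha\beta}$ at every point. The common eigendistributions of the commuting parallel operators $A_\alpha$ are then parallel, so $M$ splits as a Riemannian product $M_0\times\cdots\times M_k$ of totally umbilic spacelike submanifolds of $\hpq$, each of which is --- by completeness --- a rescaled $\hyp^{n_j}$ with $\sum_j n_j=p$. Writing $\vec\mu_j\in\R^q$ for the eigenvalue vector on the $j$-th factor, the relations $\tr A_\alpha=0$ and $\tr(A_\alpha A_\beta)=p\,\delta_{\alpha\beta}$ become $\sum_jn_j\vec\mu_j=0$ and $\sum_jn_j\vec\mu_j\vec\mu_j^{T}=p\,I_q$; the second forces the $\vec\mu_j$ to span $\R^q$, so with the first (a vanishing positive combination of spanning vectors) there must be at least $q+1$ factors, whereas realizing the product inside $\V$, which has only $q+1$ timelike directions, allows at most $q+1$. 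Hence $M$ is a product submanifold with exactly $q+1$ factors, as in Definition~\ref{de:barbot}.

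The genuine difficulties are twofold. First, the Bochner computation with its signs: routine but delicate, and the precise point where negative-definiteness of the normal bundle must be used to turn the terms that obstruct the Riemannian case into favorable ones. Second, the equality analysis, whose real content is the structure theory of parallel spacelike submanifolds of $\hpq$ together with the bookkeeping that pins the number of factors to $q+1$; the Cauchy--Schwarz estimate, the Omori--Yau step, and the strong-maximum-principle argument are otherwise standard.
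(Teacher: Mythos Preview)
The paper does not prove this statement; it is quoted from \cite{ish88} and used as an ingredient in Theorem~\ref{thm:scal}. What the paper does say, in Subsection~\ref{sub:ingredients}, is that Ishihara's argument proceeds by a Simons-type formula combined with the Omori--Yau maximum principle, and it then contrasts that route with the paper's own Bochner approach (Proposition~\ref{proposition formula}), which yields the complementary bound $p(p-1)$.

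Your proposal is a correct reconstruction of precisely that Simons--Omori--Yau line. The identity you write,
\[\tfrac12\Delta S=\|\nabla\sff\|^2-pS+\sum_{\alpha,\beta}(\tr A_\alpha A_\beta)^2+\sum_{\alpha,\beta}\|[A_\alpha,A_\beta]\|^2,\]
is the codimensional Simons formula in the negative-definite-normal setting; the Cauchy--Schwarz step on the Gram matrix and the Omori--Yau conclusion are standard; and your rigidity analysis (strong maximum principle on $pq-S$, then parallel $\sff$, flat normal bundle, and the counting of factors via the rank of $\sum n_j\vec\mu_j\vec\mu_j^{\,T}=pI_q$) is exactly the mechanism the paper packages in Lemma~\ref{cor:commutator} and Proposition~\ref{pro:parallel}. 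One small remark: you invoke Theorem~\ref{pro:scal} to guarantee $S$ is bounded above so that Omori--Yau applies. That is logically fine here, since Theorem~\ref{pro:scal} is proved independently via the Bochner route, but it is not how Ishihara proceeds (he has no access to the Bochner bound); a self-contained version would instead feed the differential inequality $\tfrac12\Delta S\ge S(S-pq)/q$ directly into a generalized maximum principle that does not presuppose boundedness, or first establish boundedness from the inequality itself.
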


We will discuss in detail in Subsection~\ref{sub:ingredients} the difference between the two approaches and results. For the moment, by combining our result with Ishihara's one, we are able to give a sharp upper bound on the scalar curvature, or equivalently the norm of the second fundamental form, of maximal $p-$submanifolds in $\hyp^{p,q}$:

\begin{thmx}\label{thm:scal}
Let $M$ be a complete maximal spacelike $p-$submanifold in $\hyp^{p,q}$, then \[\Scal_M\leq p\cdot\min\{0,q-p+1\}.\] Moreover, if the bound is achieved at one point, then $M$ is a product submanifold of the form $\hyp^{n_1}\times\dots\times\hyp^{n_k},$ for $k=\min\{p,q+1\}$.
\end{thmx}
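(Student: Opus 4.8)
The plan is to obtain Theorem~\ref{thm:scal} by synthesising the intrinsic bound of Theorem~\ref{pro:scal} (equivalently, the extrinsic bound of Theorem~\ref{pro:II}) with Ishihara's bound, using the trace identity \eqref{eq:trace} as the bridge between scalar curvature and the norm of the second fundamental form. Since $M$ is maximal, $H\equiv 0$, so \eqref{eq:trace} reads
\[
\Scal_M=-p(p-1)+\|\sff_M\|^2 ,
\]
and hence every pointwise upper bound on $\|\sff_M\|^2$ translates into one on $\Scal_M$, and conversely. So the whole statement will follow from optimally combining the two available bounds on $\|\sff_M\|^2$.

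Concretely, I would invoke Theorem~\ref{pro:II}, which gives $\|\sff_M\|^2\le p(p-1)$, together with Ishihara's theorem, which gives $\|\sff_M\|^2\le pq$; taking the better of the two yields $\|\sff_M\|^2\le p\min\{p-1,q\}$ at every point. Substituting into the displayed identity,
\[
\Scal_M\le -p(p-1)+p\min\{p-1,q\}=p\bigl(\min\{p-1,q\}-(p-1)\bigr)=p\min\{0,q-p+1\},
\]
which is the asserted inequality. Which of the two bounds is binding is governed by the sign of $q-p+1$: for $q\ge p-1$ the estimate reduces to $\Scal_M\le 0$, i.e. Theorem~\ref{pro:scal}, while for $q\le p-2$ it is equivalent to Ishihara's inequality $\|\sff_M\|^2\le pq$.

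For the rigidity statement, suppose the bound is attained at a point $x\in M$, so $\|\sff_M\|^2(x)=p\min\{p-1,q\}$. If $q\ge p-1$ this forces $\|\sff_M\|^2(x)=p(p-1)$, which is exactly the equality case of Theorem~\ref{pro:II}, so $M$ is a pseudo-flat; as a product submanifold it has $p=\min\{p,q+1\}$ factors, all of dimension $1$, hence it is of the claimed form $\hyp^{n_1}\times\dots\times\hyp^{n_k}$ with $k=p$ and $n_1=\dots=n_p=1$. If instead $q\le p-2$, then $\|\sff_M\|^2(x)=pq$, which is the equality case of Ishihara's theorem, so $M$ is a product submanifold with $q+1=\min\{p,q+1\}$ factors, again of the claimed form; the borderline $q=p-1$ is covered by either statement, since there $p(p-1)=pq$ and the two conclusions coincide. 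The argument is essentially bookkeeping once the two component theorems are available: the only point requiring care is verifying that attaining the \emph{combined} bound actually triggers the equality hypothesis of Theorem~\ref{pro:II} (resp. of Ishihara's theorem) in the appropriate range of $q$, rather than merely the non-binding inequality, and that the number of product factors each predicts matches $\min\{p,q+1\}$. No genuine obstacle remains, as the substantive analytic work lies in the proofs of Theorems~\ref{pro:scal} and \ref{pro:II}.
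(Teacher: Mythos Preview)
Your proposal is correct and is precisely the approach the paper itself indicates: Theorem~\ref{thm:scal} is obtained by combining Theorem~\ref{pro:II} with Ishihara's bound via the trace identity~\eqref{eq:trace}, and the rigidity statement follows by invoking the equality case of whichever bound is binding in the given range of $q$. Your case analysis, including the borderline $q=p-1$, is handled correctly.
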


\begin{rem}
    Incidentally, this result proves that there are no flat nor Ricci-flat complete maximal spacelike $p-$submanifolds in $\hyp^{p,q}$, for $q<p-1$.
\end{rem}

\begin{thmx}\label{thm:II}
Let $M$ be a complete maximal spacelike $p-$submanifold in $\hyp^{p,q}$, then \[\|\sff_M\|^2\le p\cdot\min\{p-1,q\}.\] Moreover, if the bound is achieved over $M$, then $M$ is a product submanifold of the form $\hyp^{n_1}\times\dots\times\hyp^{n_k},$ for $k=\min\{p,q+1\}$.
\end{thmx}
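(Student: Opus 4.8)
The plan is to deduce Theorem~\ref{thm:II} directly from Theorem~\ref{thm:scal} by means of the twice-traced Gauss equation~\eqref{eq:trace}. Since $M$ is maximal its mean curvature vanishes identically, so at every point of $M$ equation~\eqref{eq:trace} reduces to
\[
\Scal_M = \|\sff_M\|^2 - p(p-1).
\]
In other words the smooth functions $\|\sff_M\|^2$ and $\Scal_M+p(p-1)$ coincide pointwise on $M$; hence any pointwise bound on one is equivalent to the matching bound on the other, and the two are attained at exactly the same points.

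The only computation involved is the elementary identity
\[
p(p-1) + p\cdot\min\{0,\,q-p+1\} \;=\; p\cdot\min\{\,p-1,\,q\,\},
\]
which follows by factoring out $p$ and applying $\min\{a,b\}+c=\min\{a+c,\,b+c\}$ with $c=p-1$. Combining it with the inequality of Theorem~\ref{thm:scal} and with the reduced Gauss equation above gives, for every $x\in M$,
\[
\|\sff_M\|^2(x) \;=\; \Scal_M(x) + p(p-1) \;\le\; p\cdot\min\{0,\,q-p+1\} + p(p-1) \;=\; p\cdot\min\{\,p-1,\,q\,\},
\]
which is the stated bound.

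For the rigidity part, suppose $\|\sff_M\|^2$ attains the value $p\cdot\min\{p-1,q\}$ at some point of $M$. By the reduced Gauss equation this occurs precisely where $\Scal_M$ attains the value $p\cdot\min\{0,q-p+1\}$, so the equality clause of Theorem~\ref{thm:scal} applies at that point and forces $M$ to be a product submanifold of the form $\hyp^{n_1}\times\dots\times\hyp^{n_k}$ with $k=\min\{p,q+1\}$. This finishes the proof.

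There is no substantial obstacle in this argument: all the geometry — combining Theorem~\ref{pro:scal} with Ishihara's theorem, and pinning down the extremal submanifolds — has already been packaged into Theorem~\ref{thm:scal}. The only thing to watch is the bookkeeping of the constant and of the number of factors $k$ across the regimes $q\ge p-1$ and $q<p-1$ (note that on the overlap $q=p-1$ one has $p(p-1)=pq$ and $k=p=q+1$), which is why it is convenient to keep everything written in terms of $\min$.
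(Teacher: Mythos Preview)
Your proof is correct and follows exactly the paper's approach: the authors explicitly note (both after Equation~\eqref{eq:trace} in the introduction and in Section~\ref{sec:comput}) that Theorem~\ref{thm:II} is equivalent to Theorem~\ref{thm:scal} via the twice-traced Gauss equation, and for this reason they do not write out a separate argument. The arithmetic identity and the pointwise translation you record are precisely what is needed.
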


\begin{rem}
    We will carefully describe spacelike product submanifolds in Section~\ref{sec:product}. For the moment, we anticipate that spacelike product $p-$submanifolds are characterized by having parallel second fundamental form and flat normal bundle (Proposition~\ref{pro:parallel}). As a consequence, all geometric quantities depending on the second fundamental form, such as its scalar, Ricci and sectional curvature are parallel over spacelike product submanifolds.
\end{rem} 

\vspace{0.3cm}\noindent\textit{Anti-de Sitter space.} In the Lorentzian setting, that is for $q=1$, we are able to promote the upper bound on the scalar curvature (Theorem~\ref{thm:scal}) to an upper bound on the Ricci curvature. The maximal hypersurface achieving the bound on the Ricci curvature is unique, up to an ambient isometry.
\begin{thmx}\label{thm:Ricci}
Let $M$ be a properly embedded spacelike maximal hypersurface in the Anti-de Sitter space $\hyp^{p,1}$, then $M$ is Ricci non-positive.

Moreover, if $\ric$ vanishes at a vector $v$ of $\T M$, then $M$ is a product hypersurface of the form $\hyp^{p-1}\times\hyp^1$.
\end{thmx}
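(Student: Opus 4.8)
The plan is to compute $\ric_M$ from the Gauss equation, reduce the pointwise estimate to a statement about the eigenvalues of the shape operator of a maximal hypersurface, and treat the equality case through Ishihara's rigidity result. Throughout we take $p\ge 2$ (for $p=1$ there is nothing to prove) and $q=1$, and we use that a properly embedded spacelike maximal hypersurface in $\hyp^{p,1}$ is complete, so that \cite{ish88} and Theorem~\ref{thm:II} apply. Let $\hat N$ be the unit normal field, $\langle\hat N,\hat N\rangle=-1$, and let $A$ be the (self-adjoint) shape operator, so that $\tr A=0$ by maximality. Tracing the Gauss equation once — the same computation that yields \eqref{eq:trace} — gives, for a unit tangent vector $X$,
\[ \ric_M(X,X)=-(p-1)+|AX|^2 . \]
When $A\equiv 0$ this returns $\ric_M=-(p-1)g$, the Ricci curvature of $\hyp^p$, as it should.

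The core of the non-positivity statement is then the purely linear-algebraic claim that $|AX|^2\le p-1$ for every unit $X$, equivalently $\|A\|_{\mathrm{op}}^2\le p-1$. To see it, combine $\sum_i\lambda_i=\tr A=0$ with $\sum_i\lambda_i^2=\tr(A^2)=\|\sff_M\|^2\le p$, the last inequality being Theorem~\ref{thm:II} for $q=1$ (equivalently, Ishihara's bound $\|\sff_M\|^2\le pq$): if $\lambda_1$ has largest modulus, then Cauchy--Schwarz gives $\lambda_1^2=\big(\sum_{i\ge 2}\lambda_i\big)^2\le (p-1)\sum_{i\ge 2}\lambda_i^2\le(p-1)(p-\lambda_1^2)$, hence $p\,\lambda_1^2\le p(p-1)$ and $\lambda_1^2\le p-1$. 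Together with the displayed formula this yields $\ric_M\le 0$.

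For the rigidity part, assume $\ric_M(v,v)=0$ for some nonzero $v\in\T_xM$, normalised so that $|v|=1$; then $|Av|^2=p-1$. Expanding $v$ in an eigenbasis of $A_x$, the identity $\sum c_i^2\lambda_i^2=p-1$ exhibits $p-1$ as a convex combination of numbers $\le p-1$, so $\lambda_i^2=p-1$ whenever $c_i\ne 0$, and pushing this back through the Cauchy--Schwarz step above forces every inequality there to be an equality; in particular $\|\sff_M\|^2(x)=p=pq$. By Ishihara's theorem \cite{ish88}, $M$ is therefore a product submanifold with $q+1=2$ factors, $M=c_1\hyp^{n_1}\times c_2\hyp^{n_2}$ with $n_1+n_2=p$; by Proposition~\ref{pro:parallel} its shape operator is parallel, with two constant eigenvalues of multiplicities $n_1$ and $n_2$. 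On the other hand the same equality discussion pins down the eigenvalues of $A_x$: for $p\ge 3$ one of them equals $\pm\sqrt{p-1}$ with multiplicity exactly $1$ (a higher multiplicity $m$ would force $m(p-1)\le p$, impossible for $p\ge 3$) and the other equals $\mp 1/\sqrt{p-1}$ with multiplicity $p-1$; for $p=2$ the two eigenvalues are $\pm 1$. Matching multiplicities forces $\{n_1,n_2\}=\{1,p-1\}$, i.e. $M\cong\hyp^{p-1}\times\hyp^1$. Equivalently, one may compute $\ric_M$ along each factor of a maximal product $c_1\hyp^{n_1}\times c_2\hyp^{n_2}$ — using $\tr A=0$ together with the fact that mixed $2$-planes of the Riemannian product are flat, whence $\mu_1\mu_2=-1$ for the two eigenvalues $\mu_i$ — and find $\ric_M(X_j,X_j)=-\tfrac{(n_j-1)p}{n_j}|X_j|^2$ along the $j$-th factor, so that $\ric_M$ has a null direction iff $\min\{n_1,n_2\}=1$.

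The main obstacle is the equality analysis: converting the hypothesis ``$\ric_M$ degenerates along one vector at one point'' into ``$\|\sff_M\|^2$ attains Ishihara's bound at that point'' requires the sharp eigenvalue inequality together with a careful discussion of its equality case, after which one must still extract the exact dimensions of the two factors rather than settling for a generic $\hyp^{n_1}\times\hyp^{n_2}$. The other point to be careful about is the reduction ``properly embedded $\Rightarrow$ complete'' in Anti-de Sitter space, which is what lets us invoke the global results of this paper and of \cite{ish88}.
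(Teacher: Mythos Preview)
Your proof is correct and follows the same strategy as the paper --- compute $\ric(v,v)=-(p-1)+|Av|^2$ from the traced Gauss equation, bound the operator norm of $A$, and treat equality through the classification of product hypersurfaces. One substantive difference is worth recording: the paper's displayed chain in Equation~\eqref{eq:ricv} bounds $|Av|^2$ by the full Hilbert--Schmidt norm $\|\sff\|^2$ and then appeals to Theorem~\ref{thm:II}, but for $q=1$ that theorem only yields $\|\sff\|^2\le p$, hence $\ric\le 1$ rather than $\ric\le 0$. Your Cauchy--Schwarz eigenvalue lemma --- deducing $\|A\|_{\mathrm{op}}^2\le p-1$ from $\tr A=0$ together with $\tr A^2\le p$ --- is exactly what closes this gap, and it is the honest content of the non-positivity step. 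For the rigidity, the paper lands on a product $\hyp^{k}\times\hyp^{p-k}$ via the equality case of Theorem~\ref{thm:II} and then computes $\ric$ explicitly on each such product (the alternative you also sketch at the end), whereas your primary argument reads off the spectrum $\{\pm\sqrt{p-1},\mp 1/\sqrt{p-1}\}$ with multiplicities $(1,p-1)$ directly from the equality case of your inequality and matches those multiplicities against the two factors. Both routes arrive at $\{n_1,n_2\}=\{1,p-1\}$; yours has the advantage of not needing the explicit shape-operator formula for product hypersurfaces.
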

This result enlights a difference between maximal product hypersurfaces: in $\hyp^{p,1}$, any maximal product hypersurface $\hyp^{p-k}\times\hyp^{k}$ maximizes the scalar curvature, as stated in Theorem~\ref{thm:scal},  but the Ricci curvature is strictly negative if and only if $k\ne1,p-1$.

The discrepancy between product hypersurfaces in Anti-de Sitter space had already been noticed in the work of \cite{kkn91}: the sharp bound for the norm of the second fundamental form of CMC hypersurfaces in $\hyp^{p,1}$ is in fact achieved only by product hypersurfaces of the form $\hyp^{p-1}\times\hyp^1$, when $H\ne0$. 

Theorem~\ref{thm:Ricci} proves that such a difference stands even for maximal hypersurfaces, and it is detected by Ricci curvature, while scalar curvature is a too weak invariant to distinguish them for $H=0$.

\begin{rem}
    It could seem that we weakened the hypothesis on the maximal hypersurfaces, since Theorem~\ref{thm:scal} requires maximal hypersurfaces to be \textit{complete}, while in Theorem~\ref{thm:Ricci} we only ask them to be \textit{properly embedded}. However, in the Anti-de Sitter space, the two conditions are equivalent in the class of CMC hypersurfaces (\cite[Theorem~B]{ecrin}).
\end{rem}

\subsection{Historical background}
Quantitative estimates and rigidity results for complete spacelike PMC $p-$submanifolds in pseudo-Riemannian \textit{spaceforms}, that is geodesically complete pseudo-Riemannian manifolds with constant sectional curvature, are a classical problem in differential geometry.

\vspace{0.3cm}\noindent\textit{Extrinsic curvature.} After the pioneering works \cite{law,ckd} on the norm of the second fundamental form of minimal hypersurfaces in the Riemannian sphere $\sph^n$, several achievements had been obtained for complete spacelike PMC $p-$submanifolds in pseudo-Riemannian spaceforms, and it would not be possible for us to mention each one of them. 

To our knowledge, the first contribution concerning the pseudo-hyperbolic space is \cite{ish88}. In that work, the norm of the second fundamental form of a spacelike complete maximal $p-$submanifold in $\hyp^{p,q}$ is proved to be at most $pq$. Moreover, if the bound is achieved at one point, then the second fundamental form is parallel and the normal bundle is flat. Hence, Theorem~\ref{pro:II} improves Ishihara's bound for $q\ge p$, for any value of $p\ge2$.

For $p=2$ and $H=0$, the bound $2q$ of \cite{ish88} had already been sharpened independently in \cite{che94,lt23}, to
\[2=2\cdot\min\{1,q\}=p\cdot\min\{p-1,q\}.\] Hence, for the case $p=2$, Theorem~\ref{pro:II} retrieves their result, but following a different approach to the problem.

The results contained in \cite{ish88} have been generalized in \cite{kkn91} to complete spacelike CMC hypersurfaces in the Anti-de Sitter space $\hyp^{p,1}$, and in \cite{cc93} to complete spacelike PMC $p-$submanifolds of $\hyp^{p,q}$, for any value of $q$. In these works, the upper bound for the norm of the second fundamental form of a spacelike complete PMC $p-$submanifold $M$ in $\hyp^{p,q}$ is proven to be an explicit smooth function $S(p,q,\|H\|)$, for $H$ the mean curvature of $M$, which is strictly increasing in each variable. 

Since such function satifies $S(p,q,0)=pq$, combining Theorem~\ref{thm:II} together with a compactness result for PMC $p-$submanifolds (Proposition~\ref{pro:compact}), we improve their result, for $q\ge p$ and small values of $\|H\|$:  
\begin{prox}\label{pro:HII}
    For any $\varepsilon>0$, there exists $h=h(\varepsilon,p)>0$ such that
    \[\|\sff_M\|<p\cdot\min\{p-1,q\}+\varepsilon,\]
    for any complete spacelike PMC $p-$submanifold $M$ in $\hyp^{p,q}$ with mean curvature $H$ such that $\|H\|\le h$.
\end{prox}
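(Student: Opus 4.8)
The plan is to argue by contradiction, feeding the compactness result Proposition~\ref{pro:compact} into the sharp estimate for maximal submanifolds, Theorem~\ref{thm:II}. Suppose the statement fails for some $\varepsilon_0>0$: then, taking $h=1/k$, there is a sequence of complete spacelike PMC $p$-submanifolds $M_k\subset\hyp^{p,q_k}$ with (parallel) mean curvature $H_k$ of norm $\|H_k\|\le 1/k$, together with points $x_k\in M_k$ such that
\[\|\sff_{M_k}(x_k)\|\;\ge\;p\cdot\min\{p-1,q_k\}+\varepsilon_0 .\]
Crucially the $q_k$ are left arbitrary here, since the $h$ we are after is not allowed to depend on $q$.

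I would then apply Proposition~\ref{pro:compact} to the pointed sequence $(M_k,x_k)_{k\in\N}$, whose mean curvatures are uniformly bounded: up to a subsequence it converges, in the pointed $C^\infty_{\mathrm{loc}}$ topology, to a complete spacelike PMC $p$-submanifold $(M_\infty,x_\infty)$ of some $\hyp^{p,q_\infty}$. Since $\|H_k\|\to 0$, the limit has $H_{M_\infty}=0$, i.e.\ $M_\infty$ is \emph{maximal}. Smooth convergence also gives convergence of the second fundamental forms, whence
\[\|\sff_{M_\infty}(x_\infty)\|\;=\;\lim_k\|\sff_{M_k}(x_k)\|\;\ge\;\liminf_k\Bigl(p\cdot\min\{p-1,q_k\}\Bigr)+\varepsilon_0\;\ge\;p\cdot\min\{p-1,q_\infty\}+\varepsilon_0,\]
the last step because $q_\infty\le\liminf_k q_k$. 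As $p\cdot\min\{p-1,q_\infty\}\ge 1$, this forces $\|\sff_{M_\infty}(x_\infty)\|^2>p\cdot\min\{p-1,q_\infty\}$, contradicting Theorem~\ref{thm:II} applied to the maximal submanifold $M_\infty\subset\hyp^{p,q_\infty}$. Hence the desired $h=h(\varepsilon,p)$ exists, and depends on $\varepsilon$ and $p$ only.

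The subtle point — and the reason this is genuinely stronger than a continuity remark about the classical bound $S(p,q,\|H\|)$, which satisfies $S(p,q,0)=pq\to\infty$ — is the uniformity in the timelike codimension: the compactness argument must be run with $q_k$ possibly unbounded, and one has to know that (i) a subsequence of the $M_k$ really does converge and (ii) the limit still sits inside a pseudo-hyperbolic space of \emph{finite} signature, so that Theorem~\ref{thm:II} can be invoked. This is exactly what Proposition~\ref{pro:compact} is designed to supply; the mechanism is that the first normal space of a (P)MC $p$-submanifold has rank at most $\tfrac12 p(p+1)$, which confines the limit to a totally geodesic $\hyp^{p,q_\infty}$ with $q_\infty$ bounded in terms of $p$ alone. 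The one point that still demands work is excluding that $\sup_{M_k}\|\sff_{M_k}\|\to\infty$ as $q_k\to\infty$ (where the classical estimate degenerates): this is handled by blowing up at a near-maximizer, rescaling the ambient metric so that $\hyp^{p,q_k}$ degenerates to a flat pseudo-Euclidean space and $M_k$ to a complete spacelike \emph{maximal} $p$-submanifold of it with $\|\sff\|\le 1$ but $\|\sff\|=1$ somewhere, which is impossible by the Bernstein-type rigidity of complete spacelike maximal submanifolds in flat pseudo-Euclidean space. I expect this blow-up/non-concentration step to be the main obstacle, the rest being soft.
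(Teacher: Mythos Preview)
Your core strategy --- contradiction, compactness via Proposition~\ref{pro:compact}, then invoke Theorem~\ref{thm:II} on the maximal limit --- is exactly the paper's. The paper's argument is short: it works in a \emph{fixed} $\hyp^{p,q}$, takes a contradicting sequence $(M_n,x_n)$ with $\|H_n\|<1/n$ and $\|\sff_{M_n}(x_n)\|^2>p(p-1)+\varepsilon/2$, normalizes by isometries $\phi_n\in\mathrm{Isom}(\hyp^{p,q})$, extracts a limit $M_\infty$ via Proposition~\ref{pro:compact}, and observes that $M_\infty$ is maximal with $\|\sff_{M_\infty}(x_0)\|^2>p(p-1)\ge p\min\{p-1,q\}$, contradicting Theorem~\ref{thm:II}. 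That is the whole proof; no rescaling, no Bernstein rigidity in flat space.

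Where you depart from this, you create a genuine gap. You let the codimension $q_k$ vary along the sequence and then appeal to Proposition~\ref{pro:compact}; but that proposition is stated and proved only for a fixed $\hyp^{p,q}$ --- its proof rests on the $q$-dependent bound $C(p,q,h)$ of \cite{cc93} --- and it says nothing about sequences living in $\hyp^{p,q_k}$ with $q_k$ unbounded. The first-normal-space mechanism you attribute to it is not part of its content, and the inequality $q_\infty\le\liminf_k q_k$ has no meaning without a notion of convergence across ambient spaces that the paper never sets up. Your closing paragraph is then a sketch of how one \emph{might} repair this (blow-up to pseudo-Euclidean space, Bernstein rigidity), not a proof; you yourself flag it as ``the main obstacle.'' If you drop the varying-$q$ machinery and run the argument in a single $\hyp^{p,q}$, your proof collapses to the paper's --- the paper simply does not engage with the $q$-uniformity issue you are trying to address.
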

However, for the moment we are not able to give a sharp estimates for $H\ne0$: although our techniques can be used even for non-maximal PMC $p-$submanifolds, there are terms in Bochner equation which vanish for $H=0$, and we are not able to control them yet when $H\ne0$.

\vspace{0.3cm}\noindent\textit{Intrinsic curvature.} Using similar techniques to \cite{ish88}, estimates on the scalar curvature spacelike complete PMC $p-$submanifolds in pseudo-Riemannian spaceform are given in \cite{scal}: for maximal $p-$submanifolds in the pseudo-hyperbolic space (see \cite[Theorem~1.1(2)]{scal}), they proved that
\[\Scal_M\le pq-p(p-1),\]
which is then significatively improved by Theorem~\ref{pro:scal}, when $q\ge p$. For the case $p=2$, the scalar curvature reduces to a multiple of the sectional curvature, hence we retrieve \cite{lt23}.

Using Equation~\eqref{eq:trace}, Proposition~\ref{pro:HII} can be rephrased in terms of scalar curvature for PMC $p-$submanifolds:
\begin{prox}\label{pro:Hscal}
    For any $\varepsilon>0$, there exists $h=h(\varepsilon,p)>0$
    \[\Scal_M<p\cdot\min\{0,q-p+1\}-\|H\|^2+\varepsilon,\]    
    for any complete spacelike PMC $p-$submanifold $M$ in $\hyp^{p,q}$ with mean curvature $H$ such that $\|H\|\le h$.
\end{prox}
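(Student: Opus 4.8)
The plan is to obtain Proposition~\ref{pro:Hscal} as a direct restatement of Proposition~\ref{pro:HII} via the trace identity~\eqref{eq:trace}. First I fix $\varepsilon>0$ and feed it, together with $p$, into Proposition~\ref{pro:HII}, which supplies a threshold $h=h(\varepsilon,p)>0$ — crucially independent of the timelike codimension $q$ — with the property that every complete spacelike PMC $p$-submanifold $M\subset\hyp^{p,q}$ with $\|H\|\le h$ satisfies $\|\sff_M\|^2<p\cdot\min\{p-1,q\}+\varepsilon$ pointwise.

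Next I substitute this estimate into~\eqref{eq:trace}, namely $\Scal_M=-p(p-1)-\|H\|^2+\|\sff_M\|^2$, to get
\[\Scal_M<-p(p-1)-\|H\|^2+p\cdot\min\{p-1,q\}+\varepsilon.\]
It then remains only to simplify the constant term: since $p\cdot\min\{p-1,q\}-p(p-1)=p\bigl(\min\{p-1,q\}-(p-1)\bigr)=p\cdot\min\{0,q-p+1\}$, the right-hand side becomes $p\cdot\min\{0,q-p+1\}-\|H\|^2+\varepsilon$, which is precisely the asserted inequality. Completeness of $M$ enters only through the hypotheses of Proposition~\ref{pro:HII}, so no extra argument is needed there.

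The deduction carries no real obstacle of its own — all the analytic content sits in Proposition~\ref{pro:HII} (and hence in Theorem~\ref{thm:II} and the compactness statement Proposition~\ref{pro:compact} on which it depends). The one point to be careful about is whether Proposition~\ref{pro:HII} is stated for $\|\sff_M\|$ or for $\|\sff_M\|^2$; because~\eqref{eq:trace} is quadratic in the second fundamental form, I would invoke (rewriting if need be) the squared form of that proposition, pushing the harmless change of $\varepsilon$ into the choice of $h$.
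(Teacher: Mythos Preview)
Your proposal is correct and matches the paper's approach exactly: the paper does not give a separate proof of Proposition~\ref{pro:Hscal} but simply introduces it as the rephrasing of Proposition~\ref{pro:HII} via Equation~\eqref{eq:trace}. Your observation about $\|\sff_M\|$ versus $\|\sff_M\|^2$ is apt---the paper's statement of Proposition~\ref{pro:HII} writes $\|\sff_M\|$ while its proof and Theorem~\ref{thm:II} work with $\|\sff_M\|^2$, so the squared version you need is indeed what is actually established.
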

Again, this estimate improve \cite[Theorem~1.1(2)]{scal}, for $q\ge p$ and $\|H\|$ small enough.

\subsection{Geometric relevance} To conclude the motivational context, we consider our work as another step in the direction of proving that complete maximal $p-$submanifolds are non-positively curved. 
We will rapidly recall the state of art, then show our progress in this direction, and finally motivate the importance of such result in the framework of \textit{higher-dimensional higher-rank Teichm\"uller theory}.

\vspace{0.3cm}\noindent\textit{Hyperbolicity.} The notion of \textit{Gromov-hyperbolicity} (see Definition~\ref{de:gromov-hyperbolic}) generalizes many metric features of negatively curved Riemannian manifolds for metric spaces.

We prove that complete maximal spacelike $p-$submanifolds in $\hyp^{p,q}$ which are Gromov-hyperbolic must have uniformly negative scalar curvature.


\begin{prox}\label{pro:delta}
     Let $M$ a complete maximal $p-$submanifold in $\hyp^{p,q}$. If $M$ is $\delta-$hyperbolic, then there exists a constant $c=c(\delta,p)>0$ such that 
     \[\Scal_M<-c.\]
\end{prox}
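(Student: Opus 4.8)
The plan is to argue by contradiction using a compactness/limiting argument. Suppose there were a sequence of complete maximal spacelike $p$-submanifolds $M_k$ in $\hyp^{p,q}$, each $\delta$-hyperbolic, but with $\sup_{M_k}\Scal_{M_k}\to 0$. By Theorem~\ref{pro:scal} we know $\Scal_{M_k}\le 0$ everywhere, so we may pick points $x_k\in M_k$ where $\Scal_{M_k}(x_k)\to 0$. After acting by an ambient isometry we may assume all the $x_k$ lie over a fixed basepoint $x_0\in\hyp^{p,q}$ with a fixed tangent $p$-plane. The first key step is a compactness result for pointed complete maximal $p$-submanifolds: using Proposition~\ref{pro:compact} (or the standard curvature and higher-derivative estimates available for maximal spacelike submanifolds, which follow from the uniform bound $\|\sff_M\|^2\le p(p-1)$ of Theorem~\ref{pro:II} together with elliptic regularity for the maximal-submanifold equation), the pointed submanifolds $(M_k,x_k)$ subconverge smoothly on compact sets to a complete maximal spacelike $p$-submanifold $(M_\infty,x_\infty)$, and the convergence is strong enough that $\Scal_{M_\infty}(x_\infty)=\lim_k\Scal_{M_k}(x_k)=0$.

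By the rigidity part of Theorem~\ref{pro:scal}, attaining $\Scal=0$ at a single point forces $M_\infty$ to be a pseudo-flat, hence isometric to $\R^p$ with its flat metric. The second key step is to show that this contradicts $\delta$-hyperbolicity: $\delta$-hyperbolicity is a property that passes to the geometric limit in the pointed sense (a $\delta$-thin-triangles condition on metric balls of radius $R$ is closed under pointed Gromov--Hausdorff convergence, and smooth convergence on compacta of complete manifolds implies pointed Gromov--Hausdorff convergence of the induced length metrics). Therefore $M_\infty$, being a limit of $\delta$-hyperbolic spaces, is itself $\delta$-hyperbolic. But $\R^p$ with its flat metric is not Gromov-hyperbolic for $p\ge 2$ (it contains arbitrarily large flat squares, so triangles are not uniformly thin), and for $p=1$ the scalar curvature is identically zero and the statement is vacuous or trivially handled separately. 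This contradiction proves that such a sequence cannot exist, i.e.\ there is $c=c(\delta,p)>0$ with $\Scal_M<-c$ for every $\delta$-hyperbolic complete maximal $p$-submanifold $M$.

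The main obstacle I expect is the compactness step: one must ensure that the limit submanifold is again \emph{complete} and that the convergence controls the scalar curvature at the marked point. Completeness of the limit is not automatic from smooth convergence on compacta alone; here it should follow from the uniform second fundamental form bound of Theorem~\ref{pro:II}, which gives a uniform lower bound on the injectivity radius and uniform control of the metric in normal coordinates of a fixed size around each point, so that balls of a fixed radius in $M_k$ converge to balls of the same radius in $M_\infty$ and no collapsing or loss of points occurs. A secondary technical point is verifying that $\delta$-hyperbolicity genuinely survives the limit with the \emph{same} constant $\delta$ — this is where one uses that the thin-triangles inequality is a closed condition and that geodesics in the $M_k$ converge to geodesics in $M_\infty$, which again rests on the uniform geometry supplied by Theorem~\ref{pro:II}. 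Once these two ingredients are in place, the rigidity statement of Theorem~\ref{pro:scal} closes the argument cleanly.
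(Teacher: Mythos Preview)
Your proposal is correct and follows essentially the same approach as the paper: argue by contradiction, pick points where the scalar curvature tends to zero, normalize by ambient isometries, use the cocompactness result (Proposition~\ref{pro:compact}) to extract a limit, apply the rigidity in Theorem~\ref{pro:scal} to identify the limit as a pseudo-flat, and then contradict the survival of $\delta$-hyperbolicity under the limit (the paper records this as Lemma~\ref{lem:deltaconv}). The paper presents the argument in two steps---first for a single $M$, then for the uniformity of $c$ in $\delta$---whereas you go directly to the uniform statement; your single contradiction argument already subsumes both steps.
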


\vspace{0.3cm}\noindent\textit{State of the art.} For $p=2$, the problem is well understood by the work of \cite{bs10,lt23}: the sectional curvature of maximal surfaces is non-positive, and maximal surfaces with uniformly negative sectional curvature are fully characterized. For $q=1$, the work of \cite{andreamax} produces quantitative estimates relating sectional curvature and the width of the convex hull.

In the general case, the picture is not clear. For the moment, we have several partial results: a consequence of \cite[Theorem B]{sst23} is that maximal $p-$submanifolds in $\hyp^{p,q}$ with sufficiently regular asymptotic boundary are uniformly negatively curvated outside a compact set, hence Gromov-hyperbolic. For $q=1$, the upcoming work \cite{hshift} proves that maximal hypersurfaces in Anti-de Sitter have uniformly negative sectional curvature if contained in sufficiently small convex bodies. 

The Gromov-hyperbolicity of complete spacelike maximal $p-$submanifolds admitting a cocompact action by a discrete subgroup of $\mathrm{O}(p,q+1)$ is completely understood after the work \cite{beykas}. They give an obstruction of being Gromov-hyperbolic, which consists in containing a so-called $j-$crown in the asymptotic boundary. 

In our language, $j-$crowns are precisely boundaries of pseudo-flat submanifolds of dimension $j+1$ (see Definition \ref{de:pseudoflat}). Proposition~\ref{pro:delta} is a first generalization of their result in the non-equivariant case.

We see Theorem~\ref{thm:scal}, Theorem~\ref{thm:Ricci} and Proposition~\ref{pro:delta} as a further hint of the fairness of our claim about the curvature of maximal $p-$submanifolds, and the methods used as a stepping stone to prove it.

\vspace{0.3cm}\noindent\textit{Geometric topology.} Higher higher Teichm\"uller theory is a recent branch of geometric topology, which investigates connected components of the character variety \[\chi\left(\pi_1(M),G\right):=\mathrm{Hom}\left(\pi_1(M),G\right)/\!/\,G\] entirely consisting of discrete and faithful representations, for $M$ a closed manifold of dimension $p\ge2$ and $G$ a semi-simple Lie group of rank $r\ge1$. Such components are called \textit{higher higher Teichm\"uller spaces}. See \cite{wien} for a survey on the topic.

If $M=S_g$ is a closed surface of genus $g\ge2$ and $G=\mathbb{P}\mathrm{SL}(2,\R)=\mathrm{Isom}^+(\hyp^2)$, it is well known that \[\chi\left(\pi_1(S_g),\proj\mathrm{SL}(2,\R)\right)\] contains exactly two higher higher Teichm\"uller space, which are the two classical Teichm\"uller spaces $\mathcal{T}(S_g)$ and $\mathcal{T}(\overline{S_g})$ of $S_g$ corresponding to the different two orientations of $S_g$.

In the general setting, it is still a huge problem to understand whether $\chi\left(\pi_1(M),G\right)$ contains higher higher Teichm\"uller spaces, namely if there exist connected components entirely consisting of discrete and faithful representations.

In this framework, the role of the pseudo-hyperbolic space $\hyp^{p,q}$ arises from being a pseudo-Riemannian symmetric space for the semi-simple group $G=\mathrm{O}(p,q+1)$. As a homogeneous space,
\[\hyp^{p,q}=\mathrm{O}(p,q+1)/\mathrm{O}(p,q).\]

There is a long standing tradition, which can be traced back to the seminal work \cite{mes}, of studing a representation $\rho\colon\pi_1(M)\to\mathrm{O}(p,q+1)$ through its action on $\rho-$invariant subsets of $\hyp^{p,q}$.


A natural $\rho-$invariant subset is the so-called \textit{limit set} $\Lambda_\rho$, namely the adherence of the orbit of a (suitably chosen) point $x_0\in\hyp^{p,q}$ by the action of $\rho\left(\pi_1(M)\right)$. The limit set is well defined under further assumptions on the representation $\rho$ (see \cite{dgk} for details). Among the several $\rho-$invariant subsets which can be associated to $\Lambda_\rho$, two families stands out: convex bodies and spacelike $p-$submanifolds. Indeed, their geometric features, respectively the projective structure and the Riemannian structure, are suitable settings to studing the properties of the subgroup $\rho\left(\pi_1(M)\right)$.

The former approach, namely the study of $\rho-$equivariant convex subsets of $\hyp^{p,q}$, has been used in \cite{mes} for $\hyp^{2,1}$ and then extended to higher dimension, that is for $G=\mathrm{SO}(p,2)$, in \cite{barmer,bar15}. The general discussion has been carried out in several recent works including \cite{dgk,beykas,mazviag}.

In particular, the works \cite{dgk,beykas} show the existence of higher higher Teichm\"uller spaces in $\chi\left(\pi_1(M),\mathrm{O}(p,q+1)\right)$.

The latter one has also a rich history: in the $3-$dimensional Anti-de Sitter space $\hyp^{2,1}$, pleated convex surfaces and surfaces with constant Gaussian curvature have been studied respectively in \cite{mes} and \cite{areapres}. The study of pleated surfaces has been pursued in higher codimension, that is in $\hyp^{2,q}$, in \cite{mazviagpleated}. Finally, the importance of PMC $p-$submanifolds, and among them maximal $p-$submanifolds, is indisputable.

The first classification results for maximal surfaces in $\hyp^{2,1}$ are contained in \cite{bbz,bs10}, and they have been rapidly generalized to any dimension, that is to $\hyp^{p,1}$, for any $p\ge2$ (see \cite{abbz,bs10}) and to any value of mean curvature $H\in\R$, since in codimension $1$ having parallel mean curvature is equivalent to having \textit{constant} mean curvature (see \cite{bbz,abbz,tamb,ecrin}). In particular, the classification of CMC hypersurfaces produces an invariant foliation of the invisibility domain. The classification of maximal $p-$submanifolds has been further generalized in higher codimension, \textit{i.e.} in pseudo-hyperbolic space $\hyp^{p,q}$, for $q>1$: first for $p=2$ (see \cite{ctt,ltw22}) and finally for any value of $p$ and $q$ in \cite{sst23}.

The classification results disclose an incredible richness, opposed to the Bernstein theorems holding in pseudo-flat and pseudo-spherical Riemannian spaceforms (see \cite{cheng-yau,ish88}). Indeed, complete spacelike maximal $p-$submanifolds in $\hyp^{p,q}$ are in bijection with $1-$Lipschitz maps from $\sph^p$ to $\sph^q$ whose image contains no antipodal points.

From a higher higher Teichm\"uller perspective, the contribution of these results can be summarized by \cite[Corollary 1.2]{sst23}, which provides an obstruction to $\chi\left(\pi_1(M),\mathrm{O}(p,q)\right)$ to contain higher higher Teichm\"uller spaces, based on the topology and the geometry of $M$: if $\pi_1(M)$ admits a $P_1-$Anosov representation, then it acts properly discontinuously and cocompactly over the maximal $p-$submanifold $M_\rho$ whose asymptotic boundary is the limit set $\Lambda_\rho$, \textit{i.e.} the universal cover of $M$ is diffeomorphic to $M_\rho$. Since maximal $p-$submanifolds are proven to be diffeomorphic to $\R^p$, this gives a constraint on $M$.

Proving that the sectional curvature of maximal $p-$submanifolds is negative would constitute an even stronger constraint form $\chi\left(\pi_1(M),\mathrm{O}(p,q)\right)$ to contain higher higher Teichm\"uller spaces. Moreover, for $p=3$, it would imply that any $\hyp^{3,q}-$convex cocompact subgroup is a Kleinian group, which would promote pseudo-hyperbolic geometry to be a privileged setting to attack Cannon conjecture, which asserts that a hyperbolic group whose boundary at infinity is homoeomorphic to $\sph^2$ should be virtually (up to a finite index) a lattice in $\mathbb{P}\mathrm{SL}(2,\mathbb{C})$. 

\subsection{Main ingredients}\label{sub:ingredients} The strategy is to apply the (strong) maximum principle to the norm of the second fundamental form. Both in \cite{ish88,che94}, the authors produce Simon-type formulae to apply the Omori-Yau maximum principle. On the contrary, in \cite{lt23} the dimension $p=2$ has been exploited to use a holomorphic version of the Bochner formula.

The last approach cannot be used for $p>2$, since there is no natural complex structure over the maximal $p-$submanifold. For this reason, we first prove that PMC $p-$submanifolds have harmonic second fundamental form, in order to use a Bochner-Weitzenb\"{o}ck-type formula (Corollary~\ref{cor: sff harmonic iff PMC}). Through a careful analysis of the Weitzenb\"{o}ck curvature operator of a maximal submanifold, we modify the Bochner formula so that a maximum principle can be applied.

A compactness result (Proposition~\ref{pro:compact}) allows to conclude using the classical maximum principle, instead of the Omori-Yau maximum principle.

By the refined Bochner formula (Equation~\eqref{eq:II}), achieving the maximum implies that the submanifold has parallel second fundamental form and flat normal bundle. For this reason, we give a full classification of the $p-$submanifolds having these properties, showing that they coincide with the class of product $p-$submanifolds (Proposition~\ref{pro:parallel}). Among them, the maximum is actually achieved by the ones maximizing the number of factors, that are pseudo-flats.

\vspace{0.3cm}\noindent\textit{Bochner vs Simons.} The cornerstone of this work lies in the refined Bochner formula (Equation~\eqref{eq:II}), which allows us to compute the Laplacian of the scalar curvature, or equivalently of the norm of the second fundamental form, thanks to Equation~\eqref{eq:trace}.

Equation~\eqref{eq:II} and \cite[Equation~(3.5)]{ish88} give an exact expression of the laplacian of the norm second fundamental form of a maximal $p-$submanifold in a pseudo-Riemannian space form, using respectively Bochner-Weitzenb\"ock and Simons-Calabi equations. However, the content of Theorem~\ref{pro:II} and \cite[Theorem 1]{ish88} are quite different.  We would like to discuss the differences between the two methods.

Quite surprisingly, the refined Bochner formula we produce only depends on the intrinsic geometry of the $p-$submanifold, when the mean curvature $H$ identically vanishes: for this reason our result contains no codimensional contribution, as stated in Theorem~\ref{pro:scal} and Theorem~\ref{pro:II}. 

In other words, Bochner formula only detects pseudo-flat submanifolds. Since the rank of $\mathrm{O}(p,q+1)$ equals $\min\{p,q+1\}$, there are no pseudo-flat submanifolds for $p>q+1$ (where our bound can never be achieved). This is the case where the result of \cite{ish88} is sharp.

Conversely, the manipulation of the Simons formula produces an expression which mostly depends on the codimensional contribution. In the same flavour as in our case, yet in the opposite direction, Ishiara's result does not detect the lack of dimension: indeed, one cannot have more that $p$ factors for dimensional reasons, namely his bound cannot be achieved in higher codimension, \textit{i.e.} exactly when our bound sharpen the previous result.

It could be interesting to explore further how the two complementary approaches can be combined to extract geometric informations, and to understand why they detect different perspective while expressing the same quantities, namely the laplacian of the norm second fundamental form.

\subsection{Organization of the paper}

In Section \ref{section PseudoR spaceforms}, the necessary background on pseudo-Riemannian spaceforms is presented. In Section \ref{sec:product}, \emph{product submanifolds} are described and characterized. Section \ref{sec: harmonic forms} gives the necessary background on harmonic forms on Riemannian bundles, needed to produce the Bochner formula. Section \ref{sec:comput} is the core of this article: we manipulate the Bochner formula and prove Proposition~\ref{pro:scal} and Proposition~\ref{pro:II}. Lastly, in Section \ref{sec:applications} we apply our main results to prove Proposition~\ref{pro:HII}, Theorem~\ref{thm:Ricci}, and Proposition~\ref{pro:delta}.

\subsection*{Acknowledgements}
We are grateful to Andrea Seppi and Jérémy Toulisse for encouraging and promoting this collaboration since the earlier stage, and for several related discussions. We also want to thank Filippo Mazzoli and Gabriele Viaggi for the interest shown in our project.

The second author is particularly grateful to Jérémy Toulisse for pointing out a na\"{i}f mistake in the first draft, to Francesco Bonsante for helpful comments, and to Stefano Pigola for giving a deeper insight about Weitzemb\"ock formulas.

\section{Pseudo-Riemannian spaceforms}\label{section PseudoR spaceforms}
Let $(X,g)$ be a pseudo-Riemannian manifold, namely $X$ is a $(p+q)-$smooth manifold and $g$ is a non-degenerate symmetric $(2,0)-$tensor of signature $(p,q)$.

Tangent vectors are distinguished according to the sign of their norm: a vector $v$ in $\T X$ is said to be \textit{timelike} (resp. \textit{spacelike}, \textit{lightlike}, \textit{causal}) if $g(v,v)$ is negative (resp. positive, null or non-positive).

\begin{de}\label{de:spaceform}
    Let $p,q$ be two integers and $c$ be a real number. A \textit{pseudo-Riemannian spaceform} $\X^{p,q}(c)$ is a complete pseudo-Riemannian manifold of signature $(p,q)$ and constant sectional curvature $c$.
\end{de}

\begin{rem}
    Up to rescale the pseudo metric by a constant factor, we can assume that the curvature $c$ equals $-1$, $0$ or $1$.
\end{rem}
\begin{rem}
    We make no asumption on the topology of what we call a spaceform. Hence, there are several non isometric spaceform of a given signature and curvature, but they all share the same universal cover.
\end{rem}
Accordingly with the Riemannian case, a negatively curved pseudo-Riemannian spaceform is called \emph{pseudo-hyperbolic}, a flat pseudo-Riemannian spaceform is called \emph{pseudo-Euclidean} and a positively curved pseudo-Riemannian spaceform is called \emph{pseudo-spherical}.

\subsection{Explicit models of spaceforms}
Let $\V$ be a real vector space of dimension $p+q$, and $\Q$ a quadratic form with signature $(p,q)$. The space $\left(\V,\Q\right)$, denoted $\R^{p,q}$, is a pseudo-Euclidean spaceform of signature $(p,q)$. The quadric of timelike (respectively spacelike) unitary vectors
\begin{align*}
    \hyp^{p,q-1}&:=\left\{v\in\V\ |\ \Q(v,v)=-1\right\}\\
    \sph^{p-1,q}&:=\left\{v\in\V\ |\ \Q(v,v)=+1\right\}
\end{align*} endowed with the pseudo-Riemannian metric induced by restricting $\Q$ to its tangent space, is a pseudo-hyperbolic spaceform (respectively pseudo-spherical spaceform). Hereafter, we will consider $\V=\R^{p+q}$ and $\Q=\mathrm{I}_p\oplus(-\mathrm{I}_q)$.

The isometry groups of these spaces are the following:
\[\mathrm{Isom}(\R^{p,q})=\mathrm{O}(p,q)\rtimes\R^{p+q},\qquad \mathrm{Isom}(\hyp^{p,q-1})=\mathrm{Isom}(\sph^{p-1,q})=\mathrm{O}(p,q).\]

\begin{rem}
    A totally geodesic submanifold of signature $(n,k,d)$ in $\R^{p,q}$ is an affine subspace $W$ of dimension $n+k+d$ such that the restriction of the quadratic form $\Q$ to $W$ has signature $(n,k,d)$.
    
    Similarly, a totally geodesic submanifold of signature $(n,k,d)$ in $\hyp^{p,q-1}$ (respectively $\sph^{p-1,q}$) is a connected component of $W\cap\hyp^{p,q-1}$ (respectively $W\cap\sph^{p-1,q}$), for $W$ a linear subspace of $\R^{p,q}$ such that the restriction of the quadratic form $\Q$ to $W$ has signature $(n,k+1,d)$ (respectively $(n+1,k,d)$).
\end{rem}

\subsection{Spacelike submanifolds} A submanifold $M$ of a pseudo-Riemannian manifold $(X,g)$ is \textit{spacelike} if the induced metric is positive-definite, \textit{i.e.} $M$ carries a Riemannian structure. It follows that a pseudo-Riemannian manifold $X$ with signature $(p,q)$ contains no spacelike submanifold of dimension greater than $p$. In this work, we focus on spacelike submanifolds with maximal dimension, that is $p-$submanifolds.

\subsubsection{First and second fundamental forms}

A spacelike $p-$submanifold $M$ naturally induces an orthogonal splitting $\T X=\T M\oplus\No M$ over $M$. The pseudo-metric splits, as well, as \[g=g_{|\T M}\oplus g_{|\No M}=:g_\T\oplus g_\No.\]
Since the spacelike dimension is maximal, the quadratic form $g_\No$ is negative definite. Hence, a spacelike $p-$submanifold naturally induces a positive definite scalar product over $\T X|_M$, that is
\[\scal{\cdot}{\cdot}:=g_\T\oplus -g_\No,\ \mbox{ and }\ \|u\|^2:=\scal{u}{u}\ .\]

The \emph{second fundamental form} of $M$ is the symmetric covariant $2$-tensor on $M$ defined by
\[\sff(u,v)=\left(\nabla_uv\right)^\No\ ,\]
where $\nabla$ is the induced connection on $M$ and $()^\No$ denotes the normal component.

\subsubsection{Curvature}
The \emph{mean curvature} $H$ of $M$ is the trace, with respect to $g$, of the second fundamental form.
The convention for the \emph{curvature tensor} $R$ of $M$ is the following, for $u,v,w$ vectors fields of $M$:
\[R_{u,v}w=[\nabla_u,\nabla_v]w-\nabla_{[u,v]}w\ .\]
The \emph{sectional curvature} of a $2-plane$ in $\T M$ spanned by the orthonormal family $(u,v)$ is defined by
\[\mathrm{sec}(\mathrm{Span}(u,v))=g(R_{u,v}v,u)\ .\]

\subsubsection{Fundamental equations in a spaceform}

Submanifolds of spaceforms satisfy the following fundamental equations:
\begin{thm}[Fundamental equations of a submanifold]
    Let $M$ be a $p$-dimensional spacelike submanifold of a spaceform $\X^{p,q}(c)$ with constant sectional curvature $c$. Let $(u,v)$ be an orthonormal family in $\T_x M$. Then, the sectional curvature of $M$ in the tangent subspace spanned by $u$ and $v$ satisfies
\begin{equation}\tag{Gauss Equation}\label{Gauss equation}
    \sec\left(\mathrm{Span}(u,v)\right)=c+g_\No(\sff(u,u),\sff(v,v))-g_\No\left(\sff(u,v),\sff(u,v)\right).
\end{equation}
Let $X,Y,Z$ be vector fields on $M$, then
\begin{equation}\tag{Codazzi Equation}\label{Codazzi equation}
    (\nabla_X\sff)(Y,Z)=(\nabla_Y\sff)(X,Z).
\end{equation}
Let $X,Y$ be vector fields and $U,V$ be normal vector fields on $M$, then
\begin{equation}\tag{Ricci Equation}\label{Ricci equation}
    g_\No(R^\No_{X,Y}U , V) = g_\T(B_U(Y) , B_V(X)) - g_\T(B_U(X) , B_V(Y)).
\end{equation}
\end{thm}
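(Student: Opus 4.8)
The plan is to obtain all three identities from a single computation: express the ambient curvature tensor $\bar R$ of $\X^{p,q}(c)$ through data intrinsic to the pair $(M,\X^{p,q}(c))$ by means of the Gauss and Weingarten formulas, and then read off its tangential, normal and mixed components. Let $\bar\nabla$ denote the Levi-Civita connection of the ambient spaceform. The first step is to record the two decomposition formulas. For vector fields $X,Y$ tangent to $M$ I split $\bar\nabla_X Y$ into its tangential and normal parts: the tangential part is a torsion-free connection compatible with $g_\T$, hence equals the induced Levi-Civita connection $\nabla$, while the normal part $(\bar\nabla_X Y)^\No$ is symmetric and tensorial, hence equals $\sff(X,Y)$, giving $\bar\nabla_X Y=\nabla_X Y+\sff(X,Y)$. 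For $X$ tangent and $\xi$ a normal field, differentiating $g(\xi,Y)=0$ shows that $B_\xi X:=-(\bar\nabla_X\xi)^\T$ satisfies $g_\T(B_\xi X,Y)=g_\No(\sff(X,Y),\xi)$, and the normal part of $\bar\nabla_X\xi$ is the normal connection $\nabla^\No$, giving $\bar\nabla_X\xi=-B_\xi X+\nabla^\No_X\xi$.

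Next comes the key computation. For $X,Y,Z$ tangent to $M$ I substitute the two formulas into $\bar R_{X,Y}Z=\bar\nabla_X\bar\nabla_Y Z-\bar\nabla_Y\bar\nabla_X Z-\bar\nabla_{[X,Y]}Z$; after collecting terms (using torsion-freeness of $\nabla$ and the definition $(\nabla_X\sff)(Y,Z)=\nabla^\No_X(\sff(Y,Z))-\sff(\nabla_X Y,Z)-\sff(Y,\nabla_X Z)$) this yields
\[\bar R_{X,Y}Z=R_{X,Y}Z-B_{\sff(Y,Z)}X+B_{\sff(X,Z)}Y+(\nabla_X\sff)(Y,Z)-(\nabla_Y\sff)(X,Z),\]
in which the first three terms on the right are tangent to $M$ and the last two are normal. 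Since $\X^{p,q}(c)$ has constant sectional curvature, $\bar R_{X,Y}Z=c\,(g(Y,Z)X-g(X,Z)Y)$, which is tangent. Matching normal parts gives $(\nabla_X\sff)(Y,Z)=(\nabla_Y\sff)(X,Z)$, that is \ref{Codazzi equation}. Matching tangential parts, pairing with a tangent vector $W$, and using $g_\T(B_\xi X,W)=g_\No(\sff(X,W),\xi)$, gives
\[c\big(g(Y,Z)g(X,W)-g(X,Z)g(Y,W)\big)=g_\T(R_{X,Y}Z,W)-g_\No(\sff(Y,Z),\sff(X,W))+g_\No(\sff(X,Z),\sff(Y,W)),\]
and specializing to $Z=Y=v$, $X=W=u$ with $(u,v)$ orthonormal produces \ref{Gauss equation}.

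For the Ricci equation I repeat the computation with a normal field $\xi$ in place of $Z$. Substituting the Weingarten formula, and then Gauss and Weingarten once more, into $\bar R_{X,Y}\xi$ and keeping only the normal component gives $(\bar R_{X,Y}\xi)^\No=R^\No_{X,Y}\xi-\sff(X,B_\xi Y)+\sff(Y,B_\xi X)$. Since $X,Y$ are tangent and $\xi$ is normal, $\bar R_{X,Y}\xi=c\,(g(Y,\xi)X-g(X,\xi)Y)=0$, so this normal component vanishes; pairing with a normal field $V$, turning $g_\No(\sff(\,\cdot\,,\,\cdot\,),V)$ back into $g_\T(B_V\,\cdot\,,\,\cdot\,)$, and using symmetry of the shape operators, I arrive at \ref{Ricci equation}.

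I expect the only delicate part to be the sign bookkeeping inherent to the pseudo-Riemannian setting: $g_\No$ is negative definite, so the dictionary between $\sff$ and the shape operators $B_\xi$ and the placement of minus signs in the Gauss--Weingarten formulas must be kept consistent, and one must not confuse $g$, $g_\T$, $g_\No$ and the auxiliary positive-definite product $\scal{\cdot}{\cdot}$. The short verification that the tangential part of $\bar\nabla$ really is the Levi-Civita connection of $(M,g_\T)$ is also needed. None of this is a true obstacle: these are the classical structure equations of a submanifold, and the constant-curvature hypothesis enters only to make $\bar R$ explicit and to force its normal and mixed components to vanish.
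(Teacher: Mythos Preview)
Your argument is correct and is precisely the classical derivation of the Gauss, Codazzi and Ricci equations via the Gauss and Weingarten decompositions of $\bar\nabla$. The paper does not give its own proof of this statement: it simply records the three equations as classical and refers to \cite{ONEILL} (Corollary~4.6, Corollary~4.34 and Exercise~4.11), where the same computation you outline is carried out. So your proposal supplies exactly the standard proof the paper chose to omit.
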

This results are classic: the three equations can be found respectively as Corollary~4.6, Corollary~4.34 and Exercise~4.11 in \cite{ONEILL}.

\begin{rem}
    The second fundamental form is a symmetric $2-$form on $M$ taking values in the normal bundle, namely \[\sff\left(\cdot,\cdot\right)\in\Gamma\left(\left(\T^*M\otimes\T^*M\right)\otimes\No M\right).\]
    Equivalently, we can see it as a $1-$form taking values in the bundle \[\mathrm{Hom}\left(\T M,\No M\right)=\T^*M\otimes\No M.\] For the sake of clearness, we will denote by $\A$ the second fundamental form seen as an element of $\OTN{M}$, that is $\A(u):=\sff(u,\cdot)$.
\end{rem}
\subsubsection{Harmonicity of the second fundamental form in a spaceform}
\begin{lem}
    Let $M$ be a spacelike $p-$dimensional submanifold of a pseudo-Riemannian spaceform. The second fundamental form $\A$ of $M$, seen as an element of $\OTN{M}$, satisfies
    \[\dns \A=-\nabla^\No H,\]
    where $H$ is the mean curvature vector of $M$.
\end{lem}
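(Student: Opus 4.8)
The plan is to unwind the definition of the codifferential $\dns$ on $\OTN M$, reduce everything to the total covariant derivative of $\sff$, and then let the \eqref{Codazzi equation} convert a divergence-type contraction into an $\No M$-covariant derivative of a trace. The first thing I would record is the (routine) identification of how $\nb$ acts: since $\A(u)=\sff(u,\cdot)$, and the connection $\nb$ on $\mathrm{Hom}(\T M,\No M)=\T^*M\otimes\No M$ is the one built from the Levi-Civita connection $\nabla$ of $M$ on $\T M$ and the normal connection $\nabla^\No$ on $\No M$, a direct computation in a local frame gives, for vector fields $X,Y,Z$ on $M$,
\[(\nb_X\A)(Y)(Z)=\nabla^\No_X\big(\sff(Y,Z)\big)-\sff(\nabla_XY,Z)-\sff(Y,\nabla_XZ)=(\nabla_X\sff)(Y,Z),\]
i.e. $\nb\A$ is just the total covariant derivative of $\sff$ viewed as a symmetric $\No M$-valued $2$-tensor. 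This is exactly what makes the Codazzi equation applicable in the next step.

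Next I would fix $x\in M$, choose a local $g_\T$-orthonormal frame $(e_i)_{i=1}^p$ of $\T M$, and use $\dns\A=-\sum_i(\nb_{e_i}\A)(e_i)$ (equivalently $\dns=-\tr_{g_\T}\nb$ on $1$-forms). Evaluating at $Z\in\T_xM$ and using the identification above,
\[(\dns\A)(Z)=-\sum_{i=1}^p(\nabla_{e_i}\sff)(e_i,Z).\]
Because $\sff$ and each $\nabla_X\sff$ are symmetric in their two tangential slots, the \eqref{Codazzi equation} gives $(\nabla_{e_i}\sff)(e_i,Z)=(\nabla_{e_i}\sff)(Z,e_i)=(\nabla_Z\sff)(e_i,e_i)$, so $(\dns\A)(Z)=-\sum_i(\nabla_Z\sff)(e_i,e_i)$. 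Finally, since the metric contraction $\tr_{g_\T}$ commutes with covariant differentiation ($g_\T$ being $\nabla$-parallel), $\sum_i(\nabla_Z\sff)(e_i,e_i)=\nabla^\No_Z\big(\sum_i\sff(e_i,e_i)\big)=\nabla^\No_Z H$, the last equality being the definition of $H$ as the $g$-trace of $\sff$. Hence $(\dns\A)(Z)=-\nabla^\No_Z H$ for all $Z$, which is the claimed identity $\dns\A=-\nabla^\No H$.

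I do not expect a genuine obstacle: the argument is a short frame computation. The only points requiring care are (i) fixing the sign/normalisation convention for $\dns$ — taken as $\dns=-\tr_{g_\T}\nb$, which is what matches the stated formula and the later Bochner--Weitzenb\"ock discussion; and (ii) applying Codazzi in exactly the slot needed to trade the contraction $\sum_i(\nabla_{e_i}\sff)(e_i,\cdot)$ for $\nabla^\No_{(\cdot)}$ of the trace. It is worth noting in passing that the complementary consequence of Codazzi, namely $\dn\A=0$, is what will pair with this lemma to characterise harmonicity of $\A$.
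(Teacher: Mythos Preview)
Your argument is correct and follows the same route as the paper: unwind $\dns\A$ as a trace of $\nb\A$, identify $(\nb_X\A)(Y)(Z)=(\nabla_X\sff)(Y,Z)$, apply the \ref{Codazzi equation} together with the symmetry of $\sff$ to swap the contracted slot, and recognise the resulting trace as $\nabla^\No H$. The only cosmetic difference is that the paper chooses the frame $(e_i)$ to be parallel at $x$ so the cross-terms visibly cancel, whereas you invoke directly that $\tr_{g_\T}$ commutes with covariant differentiation; both are the same statement.
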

\begin{proof}
    Let $\nb$ be the Levi-Civita connection of $\X^{p,q}$. Let $u$ be a vector in $\T_x M$ and $(e_k)_{1\leq k\leq p}$ a local orthonormal frame of $\T M$ around $x$, that is parallel at $x$. The adjoint of the covariant exterior derivative of $\A$ is then, at $x$
    \begin{align*}
        \dns \A(u) = & -\tr((\nb_\cdot \A)(\cdot)(u))\\
        = & -\tr((\nb_\cdot \sff)(\cdot,u))\\
        = & -\sum_{1\leq k\leq p}(\nb_{e_k}\sff)(e_k,u)\\
        = & -\sum_{1\leq k\leq p}(\nb_u\sff)(e_k,e_k) & \text{by symmetry and \ref{Codazzi equation}}\\
        = & -\sum_{1\leq k\leq p}\nabla^\No_u(\sff(e_k,e_k)) + \cancel{2\sff(\nabla_u e_k,e_k)} & \text{$\nabla_u e_k$ vanish at $x$}\\
        = & -\sum_{1\leq k\leq p}\nabla^\No_u(\sff(e_k,e_k))
    \end{align*}
\end{proof}
\begin{cor}\label{cor: sff harmonic iff PMC}
    The second fundamental form of a spacelike $p-$submanifold $M$ of $\X^{p,q}$ is a harmonic 1-form with values in $E$ if and only if $M$ has parallel mean curvature.
\end{cor}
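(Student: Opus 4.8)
The plan is to deduce this immediately from the Lemma together with the \ref{Codazzi equation}. Recall that a $1$-form with values in a Riemannian bundle is \emph{harmonic} precisely when it is both closed for the covariant exterior derivative $\dn$ and coclosed for its formal adjoint $\dns$; so I would check these two conditions separately for $\A$, viewed as an element of $\OTN{M}$.

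First, $\A$ is closed without any hypothesis on $M$. Unwinding the definition of $\dn$ on a bundle-valued $1$-form, $\dn\A(X,Y)=(\nabla_X\A)(Y)-(\nabla_Y\A)(X)$, where $\nabla$ denotes the connection induced on $\mathrm{Hom}(\T M,\No M)$ by the Levi-Civita connection of $M$ and the normal connection. Since $\A(u)=\sff(u,\cdot)$, this equals $(\nabla_X\sff)(Y,\cdot)-(\nabla_Y\sff)(X,\cdot)$, which vanishes identically by the \ref{Codazzi equation}. The only point to be careful about is that the connection differentiating $\A$ in the definition of $\dn$ is the same one implicit in the covariant derivative of $\sff$ appearing in Codazzi — which holds by construction.

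Second, the Lemma gives $\dns\A=-\nabla^\No H$. Hence $\A$ is coclosed if and only if $\nabla^\No H=0$, i.e. if and only if $H$ is parallel in the normal bundle, which is exactly the definition of $M$ having parallel mean curvature. Combining the two observations: $\A$ is harmonic if and only if $\dn\A=0$ and $\dns\A=0$, if and only if $\dns\A=0$, if and only if $M$ is PMC.

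There is no genuine obstacle here — the statement is a corollary of the Lemma. The only thing worth flagging is the notion of harmonicity in play: I read it as closed-and-coclosed rather than as lying in the kernel of the Hodge Laplacian $\dn\dns+\dns\dn$ (these differ on a noncompact $M$), but since $\dn\A=0$ holds unconditionally, the distinction is immaterial for this statement.
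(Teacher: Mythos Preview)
Your proof is correct and matches the paper's intent: the corollary is stated without proof, immediately after the Lemma computing $\dns\A=-\nabla^\No H$, and your argument---$\dn\A=0$ by \ref{Codazzi equation}, hence harmonicity reduces to $\dns\A=0$, i.e.\ to $\nabla^\No H=0$---is exactly the intended deduction. Your closing remark about the two notions of harmonicity is apt; the paper only ever uses the direction ``PMC $\Rightarrow$ harmonic'' (to apply the Bochner formula), for which the ambiguity is irrelevant.
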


\subsection{Two results in $\hpq$}

We explain here two results from \cite{sst23} concerning $p$-dimensional complete maximal submanifolds of $\hpq$:
\begin{itemize}
    \item the Plateau problem is uniquely solvable for a class of spheres in the boundary of $\hpq$,
    \item the isometry group of $\hpq$ acts cocompactly on the space of pointed complete maximal submanifolds of $\hpq$.
\end{itemize}

The second result is extended to the case of Parallel Mean Curvature submanifolds.

\subsubsection{Boundary of $\hpq$}

Recall that $\hpq$ is defined as being the set
\[\hpq :=\left\{v\in\V\ |\ \Q(v,v)=-1\right\}\ ,\]
where $(\V,\Q)$ is a signature $(p,q+1)$ real quadratic space.

It has then a topological boundary, called the \emph{Einstein universe}, and defined as follows:
\[\Einpq:=\{v\in\V\setminus\{0\}\ |\ \Q(v)=0\}/\R_{>0}\ .\]

The space boundary of a $p$-dimensional spacelike submanifold $M$ of $\hpq$ is then 
\[\partial M:=\overline{M}\cap\Einpq\ ,\]
where $\overline{M}$ denotes the closure of $M$ in $\hpq\cup\Einpq$.

\subsubsection{Solution to the Plateau problem}

In \cite{sst23}, Seppi, Smith and Toulisse solve the following Plateau problem. They call a topological sphere in $\Einpq$ \emph{admissible} if it bounds some complete spacelike $p$-dimensional submanifold of $\hpq$. For these admissible spheres, the Plateau problem is uniquely solvable:
\begin{thm*}[Theorem A in \cite{sst23}]
    For every admissible sphere $\Lambda$ in $\Einpq$, there is a unique complete maximal $p$-dimensional submanifold $M$ of $\hpq$ such that $\partial M$ equals $\Lambda$.
\end{thm*}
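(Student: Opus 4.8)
The plan is to recast the statement as a Dirichlet problem at infinity for the maximal-submanifold equation, treating uniqueness by the maximum principle and existence by a priori estimates combined with an approximation and continuity argument. First I would fix the graphical picture: since $\hpq$ is diffeomorphic to $\R^p\times\sph^q$ with the $\sph^q$-directions timelike, a complete spacelike $p$-submanifold $M$ is globally a graph of a map $\sigma\colon\R^p\to\sph^q$ (working in the universal cover of $\hpq$ when $q=1$, or, more invariantly, as a graph over a fixed totally geodesic spacelike $\hyp^p\subset\hpq$). Its asymptotic boundary is then the graph of the radial boundary value $\varphi\colon\sph^{p-1}\to\sph^q$ of $\sigma$, and admissibility of $\Lambda$ translates into $\varphi$ being $1$-Lipschitz with no pair of antipodal points in its image; this acausality is exactly what confines $M$ to a bounded region $\mathcal D(\Lambda)\subset\hpq$ (the analogue of a domain of dependence) and keeps it uniformly spacelike up to infinity. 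In these coordinates $H\equiv0$ becomes a quasilinear equation $\mathcal M(\sigma)=0$ that is uniformly elliptic wherever $\sigma$ stays uniformly spacelike.

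For uniqueness, given two complete maximal spacelike $p$-submanifolds $M_1,M_2$ with $\partial M_1=\partial M_2=\Lambda$, I would express both as graphs $\sigma_1,\sigma_2$ over a common domain with the same boundary value $\varphi$ (possible since both sit in $\mathcal D(\Lambda)$ and are uniformly spacelike up to the ideal boundary), and subtract the two equations: $w:=\sigma_1-\sigma_2$ then solves a linear, uniformly elliptic equation $Lw=0$ whose zeroth-order coefficient, if present, carries the sign favourable to the maximum principle --- this is where negativity of the ambient sectional curvature is used. Combined with a barrier at infinity built from totally geodesic copies of $\hyp^{p,p-1}$ in $\hpq$ to force the decay of $w$ near $\partial\hpq$, the maximum principle yields $w\equiv0$, i.e.\ $M_1=M_2$.

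Existence is the substantial part. I would first establish a priori estimates for solutions of $\mathcal M(\sigma)=0$ whose boundary value lies in a neighbourhood of $\varphi$: acausality of $\Lambda$ gives a $C^0$ bound, since the graph is trapped in the convex hull of $\Lambda$; in the maximal spacelike setting this upgrades, by an interior gradient estimate of Bartnik--Ecker type using completeness and the sign of the ambient curvature, to a uniform spacelike bound on compact sets; and then the concavity of $\mathcal M$ yields interior $C^{2,\alpha}$ bounds by an Evans--Krylov argument --- these being, essentially, the estimates that also underpin the compactness of the space of complete maximal submanifolds used later. With the estimates in hand I would solve the smooth case first: approximate $\varphi$ by smooth $1$-Lipschitz maps $\varphi_k\to\varphi$ with no antipodal points in their images, and for each $\varphi_k$ solve the Dirichlet problem by the method of continuity, deforming $\varphi_k$ to a constant map (whose solution is a totally geodesic $\hyp^p$) through such maps --- openness from invertibility of the linearised operator (the maximum-principle step again), closedness from the a priori estimates. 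Letting $k\to\infty$ and invoking the compactness, the solutions subconverge in $C^2_{\mathrm{loc}}$ to a complete maximal submanifold $M$, and I would check, using barriers adapted to $\Lambda$, that $\partial M=\Lambda$ exactly, with no part of the boundary lost or created.

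The hard part will be the a priori estimates \emph{uniformly up to the ideal boundary}, for what is a degenerate-elliptic equation on a noncompact domain, together with the control on the asymptotics of the approximating solutions needed --- since a general admissible $\Lambda$ is only a topological sphere --- to guarantee that the limiting submanifold has exactly the prescribed boundary. Both difficulties are met by exploiting the acausal geometry of $\Lambda$ to manufacture explicit upper and lower barriers out of totally geodesic and product submanifolds tailored to $\Lambda$, which simultaneously confine the graphs, force the correct decay, and pin down the boundary of the limit.
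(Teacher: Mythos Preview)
This theorem is not proved in the present paper: it is stated as Theorem~A of \cite{sst23} and used as a black box in Section~\ref{section PseudoR spaceforms}. There is therefore no proof here to compare your proposal against.

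For what it is worth, your outline is broadly in the spirit of the argument carried out in \cite{sst23} --- graphical parametrisation over a totally geodesic $\hyp^p$, uniqueness via a maximum-principle argument exploiting the ambient negative curvature, existence by a priori estimates, compactness, and approximation of the boundary sphere --- so as a high-level sketch it is reasonable. The genuinely delicate points you flag at the end (estimates uniform up to the ideal boundary for a degenerate problem, and control of the asymptotics to ensure no boundary is lost in the limit) are exactly where the work in \cite{sst23} is concentrated, and your proposal does not yet supply the mechanisms that resolve them; but since the paper under review does not attempt any of this, there is nothing further to assess here.
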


\subsubsection{Compactness of spacelike $p-$submanifolds with bounded mean curvature}
In this brief paragraph, we generalize \cite[Theorem~5.1]{sst23} to spacelike complete PMC $p-$submanifold in $\hyp^{p,q}$, in a weaker version.
\begin{pro}\label{pro:compact}
The group $\mathrm{O}(p,q+1)$ acts cocompactly on the space of pointed spacelike complete PMC $p-$submanifolds of $\hpq$ with bounded mean curvature.
\end{pro}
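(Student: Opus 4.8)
The plan is to follow the scheme of \cite[Theorem~5.1]{sst23}, which proves the maximal ($H\equiv 0$) case, replacing its input --- Ishihara's uniform bound $\|\sff\|^2\le pq$ --- by a uniform bound valid for PMC submanifolds with $\|H\|\le h$. Fix $h>0$ and let $\mathcal{M}_h$ denote the space of pointed complete spacelike PMC $p$-submanifolds $(M,x)$ of $\hpq$ with $\|H\|\le h$, with the topology of pointed $C^\infty$-convergence on compact sets; cocompactness of the $\mathrm{O}(p,q+1)$-action means that every sequence $(M_k,x_k)$ in $\mathcal{M}_h$ admits isometries $g_k\in\mathrm{O}(p,q+1)$ such that $(g_kM_k,g_kx_k)$ subconverges in $\mathcal{M}_h$. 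First I would normalise: $\mathrm{O}(p,q+1)$ acts transitively on $\hpq$, and the stabiliser $\mathrm{O}(p,q)$ of a point $x_0$ acts transitively on the spacelike $p$-planes of $\T_{x_0}\hpq\cong\R^{p,q}$, so one may choose the $g_k$ with $g_kx_k=x_0$ and $\T_{x_0}(g_kM_k)=P_0$ for a fixed reference $p$-plane $P_0$. It then remains to extract a convergent subsequence from complete spacelike PMC $p$-submanifolds $M_k$, each through $x_0$ with tangent plane $P_0$ there and with $\|H_{M_k}\|\le h$.

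The core of the argument is a set of uniform a priori bounds amounting to ``uniformly bounded geometry''. By \cite{cc93}, any complete spacelike PMC $p$-submanifold of $\hpq$ satisfies $\|\sff\|^2\le S(p,q,\|H\|)$ for an explicit function $S$ increasing in $\|H\|$, so $\|\sff_{M_k}\|^2\le S(p,q,h)=:C$ uniformly; since this is a prior result, independent of Theorem~\ref{thm:II}, there is no circularity. As $M_k$ is PMC, its second fundamental form $\A$ is harmonic (Corollary~\ref{cor: sff harmonic iff PMC}), so $\A$ satisfies a Simons--Bochner-type equation whose right-hand side is polynomial in $\sff$, $H$ and the constant ambient curvature; combined with the $C^0$-bounds on $\|\sff_{M_k}\|$ and $\|H_{M_k}\|$, elliptic bootstrapping (in harmonic coordinates) then yields uniform bounds on every covariant derivative $\|\nabla^{j}\sff_{M_k}\|$ on balls of fixed radius. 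Together with a lower bound on the injectivity radius of the induced metric on $M_k$ --- which in this setting follows from completeness and the bound $\|\sff_{M_k}\|\le C$ as in \cite{sst23} --- this gives the $M_k$ uniformly bounded geometry; in particular, thanks to the normalisation, the induced metrics stay uniformly spacelike near $x_0$.

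I would then conclude by a submanifold version of the Cheeger--Gromov compactness theorem: after passing to a subsequence, $(M_k,x_0)$ converges in pointed $C^\infty$ to a limit which, by the uniform bounds, realises as a complete spacelike $p$-submanifold $M_\infty$ of $\hpq$, still PMC with $\|H_{M_\infty}\|\le h$ --- all closed conditions under $C^\infty_{\mathrm{loc}}$-convergence. The step I expect to be the main obstacle is making this compactness statement precise in the \emph{pseudo}-Riemannian ambient: one must guarantee that ``spacelike'' survives in the limit, i.e. that the $M_k$ do not degenerate towards a lightlike configuration, and that $M_\infty$ is genuinely complete and not merely a local piece --- which requires the spacelikeness and injectivity-radius controls to be uniform over all of $M_k$, not only near $x_0$. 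For $H\equiv 0$ these points are settled in \cite{sst23} using only completeness and the homogeneity of $\hpq$, and the same arguments should go through once the uniform bound $\|\sff\|\le C$ supplied by \cite{cc93} replaces Ishihara's.
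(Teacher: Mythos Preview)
Your proposal is correct and follows essentially the same approach as the paper: both reduce to the argument of \cite[Theorem~5.1]{sst23} by replacing Ishihara's bound $\|\sff\|^2\le pq$ with the uniform bound $\|\sff\|^2\le C(p,q,h)$ from \cite{cc93}, and observe that PMC submanifolds still satisfy an elliptic PDE so that the rest of the compactness argument goes through unchanged. Your write-up is considerably more detailed --- spelling out the normalisation, the elliptic bootstrapping, and the potential degeneracy issues in the limit --- but the paper's proof is simply the two-line observation that these two ingredients (ellipticity and a uniform $\|\sff\|$-bound) are all that \cite{sst23} uses, and both persist in the PMC setting.
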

\begin{proof}
    The proof of \cite[Theorem~5.1]{sst23} uses the fact that maximal $p-$submanifolds are solution of an elliptic PDE and that there exists a uniform bound on the norm of the second fundamental form, only depending on the ambient manifold, namely on $p$ and $q$, which is guaranteed by \cite[Theorem 1]{ish88}.
    
    By \cite[Theorem 1]{cc93}, there exists a universal constant $C=C(p,q,h)$ bounding from above the norm of the second fundamental form of any spacelike complete PMC $p-$submanifold of $\hpq$ with mean curvature $H$ such that $\|H\|\le h$, hence the result extends.
\end{proof}

\section{Product submanifolds}\label{sec:product}
In this section we discuss a family of PMC spacelike submanifolds of $\hpq$, the \emph{product submanifolds}. They are defined as a particular embedding of product of hyperbolic spaces in $\hpq$, and we characterize them, following \cite{ish88}, as being the submanifolds with flat normal bundle and parallel second fundamental form (Section \ref{subsec parallel sff}).

\subsection{Definition}
Let $k$ be a positive integer smaller than or equal to $q+1$, and $n_1,\dots,n_k$ be positive integers such that $n_1+\dots n_k=p$. Consider the orthogonal splitting
  \[\R^{p,q+1}=\R^{n_1,1}\times\dots\R^{n_k,1}\times\R^{0,q+1-k}.\]
  Let $\hyp^{n_i}\hookrightarrow\R^{n_i,1}$ be the canonical embedding of the hyperbolic space in the Minkowski space.
\begin{de}\label{de:barbot}
  A \textit{product submanifold} in $\hyp^{p,q}$ is a $p-$dimensional submanifold defined by
  \begin{align*}
    \hyp^{n_1}\times\ldots\times\hyp^{n_k} & \to\hpq\\
    (x_1,\ldots,x_k) & \mapsto \sum_{i=1}^k \alpha_ix_i\ ,
\end{align*}
where the $\alpha_i$'s are positive coefficients satisfying
\[\sum_{i=1}^k \alpha_i^2=1\ .\]
\end{de}

\subsection{Geometry of product submanifold}

We describe in this paragraph the structure of the product submanifolds. Let $P=P(k,n_1,\ldots,n_k,\alpha_1,\ldots,\alpha_k)$ be a product submanifold.
\subsubsection{Isometry group}
The isometry group $\mathrm{O}(n_i,1)$ of each factor $\hyp^{n_i}$ of $P$ embeds diagonally in $\mathrm{O}(p,q+1)$.
The isometry group of a product submanifold hence contains the product of the isometry groups of the factors.
In particular, the product submanifolds are homogeneous.

\subsubsection{Geodesics}
Let $x=(x_1,\ldots,x_k)$ be a point in $P$.
A geodesic $\gamma$ in the factor $\hyp^{n_i}$ and starting at $x_i$ has the following parametrization
\[\gamma_i(t)=\cosh(t)x_i+\sinh(t)X_i\ ,\]
where $X_i$ is a unit norm vector in $\T_{x_i}\hyp^{n_i}$.
We obtain a unit speed geodesic $c_i$ in $P$ starting at $x$ by the following formula:
\[c_i(t):=\alpha_i\gamma_i\left(t/\alpha_i\right)+\sum_{j\neq i}\alpha_jx_j\ .\]

\subsubsection{Second fundamental form}

If $\ddot{c}_i$ denotes the second covariant derivative of $\hpq$, we obtain
\[\ddot{c}_i(t)=\frac{1}{\alpha_i}\left(\sum_{j\neq i}\alpha_j^2\right)\gamma_i\left(t/\alpha_i\right)-\sum_{j\neq i}\alpha_jx_j\ .\]
This implies that, for $X_i$ a unit vector tangent to the factor $\hyp^{n_i}$, we have
\[\sff_x(X_i,X_i)=\frac{1}{\alpha_i}\left(\sum_{j\neq i}\alpha_j^2\right)x_i-\sum_{j\neq i}\alpha_jx_j\ .\]

\begin{rem}
    The image of the second fundamental form, restricted to the tangent space to a given factor $\hyp^{n_i}$ is 1-dimensional.
    Indeed, each factor $\hyp^{n_i}$ is totally umbilical in $\hpq$ and is equidistant to a totally geodesic copy of $\hyp^{n_i}$.
\end{rem}

\subsubsection{Normal bundle}

The image of the second fundamental form, at the point $x$, is spanned by the vectors
\[N_i:=\sff_x(X_i,X_i)\ ,\]
that are parallel for the normal connection. Hence $P$ has flat normal bundle. Moreover, $P$ is included in a totally geodesic copy of $\hyp^{p,k-1}$ in $\hpq$.

We also have, for $1\leq i\neq j\leq k$,
\begin{align*}
    \scal{N_i}{N_i} & =1-\frac{1}{\alpha_i^2}\ ,\\
    \scal{N_i}{N_j} & =1\ .
\end{align*}

\subsubsection{Mean curvature vector}

The mean curvature vector of $P$ is, at the point $x=(x_1,\ldots,x_k)$
\[H_x=\sum_{i=1}^k\left(\frac{n_i}{\alpha_i}-p\alpha_i\right)x_i\ .\]
The mean curvature vector is parallel along the submanifold $P$, hence $P$ is a PMC submanifold of $\hpq$.

\subsection{Pseudo-flats}\label{section pseudo-flats}
\begin{de}\label{de:pseudoflat}
    A product submanifold that has $p$ factors of dimension 1 is called a \emph{pseudo-flat}.
\end{de}
This terminology will be justified below, as we will see that they are orbits of Cartan subgroups of $\mathrm{O}(p,q+1)$.

First remark that pseudo-flats exist in $\hpq$ if and only if $p\leq q+1$, and that they are always contained in a totally geodesic copy of $\hyp^{p,p-1}$.

\subsubsection{Barbot surfaces}

When $p=2$, pseudo-flats are the so-called \emph{Barbot surfaces} that have been studied in \cite[Section 4.7]{bar15}.

\subsubsection{Cartan subalgebra}

Let us fix a basis
\[(v_1,\ldots, v_p,w_1,\ldots, w_{q+1-p},v_{-p},\ldots,v_{-1})\]
of $\V$ in which the quadratic form $\Q$ is represented by the bloc matrix
\[\begin{pmatrix}
    0 & 0 & \frac{1}{2p}J_p \\
    0 & -\mathrm{Id}_{q+1-p} & 0\\
    \frac{1}{2p}J_p & 0 & 0
\end{pmatrix}\ ,\]
where $J_p$ is the $p\times p$ matrix defined by
\[J_p:=\begin{pmatrix}
    & & -1\\
    & \iddots & \\
    -1 & &
\end{pmatrix}\ .\]
We represent elements of $\mathrm{O}(p,q+1)$ and $\mathfrak{o}(p,q+1)$ in this basis.

The subspace $\mathfrak{a}$ of $\mathfrak{o}(p,q+1)$ defined by
\[\mathfrak{a}:=\{\mathrm{diag}(u_1,\ldots,u_p,0,\ldots,0,-u_p,\ldots,-u_1)\ |\ u_1,\ldots,u_p\in\R\}\]
is a Cartan subspace. Denoting by $\mathrm{K}$ the Killing form of $\mathfrak{o}(p,q+1)$, the map
\begin{align*}
    (\R^p,\scal{\cdot}{\cdot}) & \longrightarrow\quad(\mathfrak{a},K)\\
    u & \longmapsto \mathrm{diag}(u_1,\ldots,u_p,0,\ldots,0,-u_p,\ldots,-u_1)
\end{align*}
is a homothety, when the source is endowed with the natural scalar product.

\subsubsection{Cartan subgroups}

The connected Lie subgroup of $\mathrm{O}(p,q+1)$ with Lie algebra $\mathfrak{a}$ is
\[\AZ =\left\{a(u):=\mathrm{diag}(e^{u_1},\ldots,e^{u_p},1,\ldots,1,e^{-u_p},\ldots,e^{-u_1})\ |\ u=(u_1,\ldots,u_p)\in\R^p\right\}\ ,\]
and is a Cartan subgroup of $\mathrm{O}(p,q+1)$. One-parameter subgroups of $\AZ$ are the curves parametrized, for $u$ in $\R^p$, by
\begin{equation}\label{equation one-parameter subgroup Azero}
    \gamma_u(t)=a(tu)\ .
\end{equation}

\subsubsection{Polyhedron}

The points $v_1,\ldots,v_p,v_{-p},\ldots,v_{-1}$ are the vertices of a semi-positive polyhedron $\P$ in $\Einpq$ as we explain now.

The vectors $v_i$ represent points of $\Einpq$. We define a polyhedron $\widehat{\P}$ in $\Einpq$ as being the polyhedron with vertices $\{v_i,v_{-i} : 1\leq i\leq p\}$ and $(p-1)$-dimensional faces the convex combinations of sets of the form \[\{v_{\varepsilon(i)} : \varepsilon(i)\in\{\pm 1\},\ 1\leq i\leq p\}\ .\] More precisely,
\[\widehat{\P}:=\left\{\sum_{i=1}^p t_iv_{\varepsilon(i)}\ :\ t_i\geq 0,\ \sum_{i=1}^p t_i=1,\ \varepsilon(i)\in\{\pm 1\}\right\}/\ \R_+^*\ .\]

\begin{pro}\label{proposition polyhedron admissible sphere}
    The polyhedron $\P$ is a semi-positive sphere.
\end{pro}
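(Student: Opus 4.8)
The plan is to establish the two assertions separately — that $\P$ is homeomorphic to $\sph^{p-1}$, and that it is semi-positive — and to reduce both to one elementary computation carried out at the outset. From the block matrix representing $\Q$ in the basis $(v_1,\dots,v_p,w_1,\dots,w_{q+1-p},v_{-p},\dots,v_{-1})$ one reads off that $\Q(v_a,v_b)=0$ for all $a,b\in\{\pm1,\dots,\pm p\}$, with the sole exception
\[
\Q(v_i,v_{-i})=-\tfrac1{2p}<0\qquad(1\le i\le p).
\]
In particular every $v_{\pm i}$ is isotropic, and since the only non-vanishing pairing among the $v_{\pm i}$ joins $v_i$ to $v_{-i}$ — which cannot both appear in a single sum $\sum_i t_iv_{\varepsilon(i)i}$ — every such sum is again isotropic, hence represents a point of $\Einpq$ as soon as it is nonzero.

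\emph{$\P$ is a sphere.} The abstract simplicial complex on the vertex set $\{v_{\pm i}\}_{1\le i\le p}$ whose facets are the $2^p$ sets $F_\varepsilon:=\{v_{\varepsilon(1)1},\dots,v_{\varepsilon(p)p}\}$, $\varepsilon\in\{\pm1\}^p$, is exactly the boundary complex of the $p$-dimensional cross-polytope $\mathrm{conv}(\pm e_1,\dots,\pm e_p)$, so its geometric realization is homeomorphic to $\sph^{p-1}$. I would then identify this realization with $\P$ via the tautological map sending a point of the facet $F_\varepsilon$ with barycentric coordinates $(t_i)$ to $\bigl[\sum_i t_iv_{\varepsilon(i)i}\bigr]\in\Einpq$: it is well defined and continuous by the first paragraph, and it is injective because, $v_1,\dots,v_p,v_{-p},\dots,v_{-1}$ being linearly independent, a positive proportionality $\sum_i t_iv_{\varepsilon(i)i}=\lambda\sum_j s_jv_{\delta(j)j}$ forces the two simplices and the sign choices on their common support to agree, and then, imposing $\sum_i t_i=\sum_j s_j=1$, forces $\lambda=1$ and the coordinates to coincide. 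A continuous injection from a compact space into a Hausdorff space is an embedding, so $\P\cong\sph^{p-1}$.

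\emph{$\P$ is semi-positive.} Semi-positivity asks that the isotropic representatives of the points of $\P$ can be chosen so that $\Q$ is everywhere non-positive off the diagonal. Taking the representatives provided by the vertices $v_{\pm i}$ and the barycentric parametrization, bilinearity and the first paragraph give, for $x=\sum_i t_iv_{\varepsilon(i)i}$ and $y=\sum_j s_jv_{\delta(j)j}$ with all $t_i,s_j\ge0$,
\[
\Q(x,y)=-\frac1{2p}\sum_{\substack{1\le i\le p\\\varepsilon(i)=-\delta(i)}}t_is_i\ \le\ 0,
\]
and the sign of $\Q$ on isotropic vectors is unaffected by scaling by positive reals. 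Hence $\P$ is semi-positive. It is not positive: two distinct points of a common facet $F_\varepsilon$ pair to $0$; this is unavoidable, since $\P$ is the asymptotic boundary of a pseudo-flat and hence a $(p-1)$-crown in the sense of \cite{beykas}.

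I expect the only genuinely delicate point to be the topological identification in the second step — recognizing the glued family of simplices as the boundary complex of a cross-polytope and checking that the realization map is injective; everything else is bookkeeping with the Gram matrix of the $v_{\pm i}$. As an alternative, sphericity also follows from the fact that $\P$ is the asymptotic boundary of the pseudo-flat $\AZ\cdot x_0$, which is a properly embedded copy of $\R^p$ in $\hpq$, but the combinatorial argument above has the advantage of being self-contained.
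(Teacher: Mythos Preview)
Your topological argument for sphericity is the same as the paper's: both identify $\P$ with the boundary of the cross-polytope on $\{\pm e_i\}$. Your pairwise computation $\Q(x,y)\le 0$ is also essentially the paper's core computation; the paper then feeds this into the Gram determinant of an arbitrary triple to conclude that no triple spans a subspace of signature $(1,2)$.

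There is, however, a genuine gap. You declare that ``semi-positivity asks that the isotropic representatives can be chosen so that $\Q$ is everywhere non-positive off the diagonal'', and stop once you have established that. But the paper's definition of \emph{semi-positive sphere} (imported from \cite{sst23}) is the three-clause one: (i) no triple spans a subspace containing a negative definite $2$-plane, (ii) for $p=2$, some triple spans a subspace of signature $(2,1)$, and (iii) no photon is entirely contained in $\P$. Your pairwise inequality does yield (i) via the Gram determinant argument (which you should spell out), but you do not address (ii) or (iii) at all. For (iii), the paper observes that $\P$ contains no antipodal pair of $\Einpq$, which rules out photons since the image of an isotropic $2$-plane in $\Einpq$ always contains $[v]$ and $[-v]$; your injectivity argument does \emph{not} give this, because injectivity of the realization map says nothing about whether the image meets its own antipodal image in $\Einpq$. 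For (ii), the paper simply exhibits the triple $([v_1+v_2],[v_2+v_{-1}],[v_{-1}+v_{-2}])$.

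Both omissions are easy to repair with your setup (linear independence of the $v_{\pm i}$ plus the positivity of the barycentric coefficients forbids $x=-\lambda y$ with $\lambda>0$, and any concrete triple will do for (ii)), but as written the proof does not verify the definition actually in force.
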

Semi-positive spheres have been defined in \cite{sst23} (with the name \emph{admissible nonnegative sphere}) as being topological spheres $S$ in $\Einpq$ satisfying the following properties:
\begin{itemize}
    \item any triple of points in $S$ span a subspace that does not contain a negative definite 2-plane,
    \item if $p=2$, there is a triple of points in $S$ spanning a subspace of signature $(2,1)$,
    \item no photon is totally included in $S$.
\end{itemize}
\begin{lem}
    The polyhedron $\hat{\P}$ is a $p-1$-dimensional topological sphere, and does not contain two antipodal points.
\end{lem}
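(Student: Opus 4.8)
The plan is to identify $\widehat{\P}$ explicitly as a join of $p$ copies of $\mathbb{S}^0$. Concretely, I would first set up coordinates: since the $v_i$ and $v_{-i}$ are linearly independent null vectors (they are part of a basis of $\V$), the $2p$ points $[v_{\pm i}]$ in $\Einpq$ are in "general position" in the sense that any subset spans a subspace of the expected dimension. The definition of $\widehat{\P}$ as convex combinations $\sum_i t_i v_{\varepsilon(i)}$ with $t_i\geq 0$, $\sum t_i=1$, taken modulo $\R_+^*$, is exactly the description of the simplicial join $\{v_1,v_{-1}\}*\{v_2,v_{-2}\}*\dots*\{v_p,v_{-p}\}$, projectivized. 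I would make this precise by exhibiting the homeomorphism with the topological join $(\mathbb{S}^0)^{*p}$.

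**Identifying the join with a sphere.**
The key classical fact is that the $k$-fold join of $\mathbb{S}^0$ with itself is homeomorphic to $\mathbb{S}^{k-1}$: $(\mathbb{S}^0)^{*k}\cong \mathbb{S}^{k-1}$, which one proves by induction using $\mathbb{S}^{a}*\mathbb{S}^{b}\cong\mathbb{S}^{a+b+1}$, or directly by noting that $(\mathbb{S}^0)^{*k}$ is the boundary of the $k$-dimensional cross-polytope (the $\ell^1$-unit ball in $\R^k$). So I would argue that the map sending $\sum_i t_i v_{\varepsilon(i)}$ to the point $(\varepsilon(1)t_1,\dots,\varepsilon(p)t_p)$ on the boundary of the cross-polytope $\{x\in\R^p : \|x\|_1=1\}$ is a well-defined homeomorphism onto $\partial(\text{cross-polytope})\cong\mathbb{S}^{p-1}$; well-definedness uses that the representatives $v_{\pm i}$ are linearly independent so the coefficients $(\varepsilon(i)t_i)$ are uniquely determined by the point in $\Einpq$ (the $\R_+^*$-quotient is absorbed by normalizing $\sum t_i = 1$).

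**The antipodal-points claim.**
For the second assertion, I would unwind what "antipodal" means in $\Einpq$: two points $[x],[y]$ are antipodal if $x^{\perp_\Q}$... actually the relevant notion here (from \cite{sst23}) is that $[x]$ and $[y]$ are antipodal when the line through them is tangent to the quadric, equivalently $\Q(x,y)\neq 0$ while $\Q(x)=\Q(y)=0$ — i.e., they are \emph{not} connected by a photon. So I must check: for any two distinct points $P_1=\sum t_i v_{\varepsilon(i)}$ and $P_2=\sum s_i v_{\delta(i)}$ of $\widehat{\P}$, one has $\Q(P_1,P_2)=0$. Using the block form of $\Q$ in the given basis, $\Q(v_{\varepsilon(i)},v_{\delta(j)})$ is nonzero only when $\{\varepsilon(i)\text{-index}\}$ and $\{\delta(j)\text{-index}\}$ are $\{i,-i\}$ for the same $i$, and in that case it equals (up to the $\tfrac{1}{2p}J_p$ normalization) a nonzero constant with sign depending only on $i$. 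Hence $\Q(P_1,P_2) = c\sum_{i=1}^p \big(\text{contribution when }\varepsilon(i)=-\delta(i)\big)$; the point is that all these contributions have the \emph{same sign}, so $\Q(P_1,P_2)=0$ forces $\varepsilon(i)t_i s_i = 0$ pattern...

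**The main obstacle.**
The delicate point — and where I expect to spend the most care — is precisely this last sign computation, because a priori $\Q(P_1,P_2)$ is a signed sum and could vanish "accidentally" without $P_1,P_2$ being antipodal in the intended sense. I would resolve it by computing $\Q(P_1,P_2) = -\tfrac{1}{2p}\sum_{i=1}^{p}\big(t_i s_i'\big)$-type expression where the product of the $i$-th coordinates of $P_1$ and $P_2$ (with signs $\varepsilon(i),\delta(i)$) enters with a \emph{uniform} sign (coming from $J_p$ being anti-diagonal with all entries $-1$); then $\Q(P_1,P_2)=0$ iff for each $i$ either $t_i=0$, or $s_i=0$, or $\varepsilon(i)=\delta(i)$ — which is exactly the condition that $P_1$ and $P_2$ lie in a common face, i.e., are joined by a photon (a segment of the polyhedron lying in $\Einpq$), hence \emph{not} antipodal. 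So "no two antipodal points in $\widehat{\P}$" reduces to: distinct points of the cross-polytope boundary with no common support face still satisfy the photon relation — which follows because \emph{every} pair of points of $\widehat{\P}$ satisfies $\Q(\cdot,\cdot)\leq 0$ with the degenerate directions handled as above. I would double-check the boundary case where $P_1,P_2$ disagree in sign on several coordinates, confirming the sum stays $\leq 0$ and vanishes only in the claimed configuration. The topological-sphere part is then routine given the join identification.
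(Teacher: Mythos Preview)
Your identification of $\widehat{\P}$ with the boundary of the cross-polytope is correct and is exactly the argument the paper uses (in one sentence). The issue is in the second half: you have misread what ``antipodal'' means in $\Einpq$.

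Recall that the paper sets $\Einpq=\{v\in\V\setminus\{0\}:\Q(v)=0\}/\R_{>0}$, a quotient by \emph{positive} scalars only. So $\Einpq$ double-covers the projectivized null cone, and the antipodal map is $[v]\mapsto[-v]$. ``No two antipodal points in $\widehat{\P}$'' therefore means: one cannot have $[x],[-x]\in\widehat{\P}$. Your proposed criterion $\Q(x,y)\neq 0$ is not this notion; indeed for the genuine antipodal pair $[v],[-v]$ one has $\Q(v,-v)=-\Q(v,v)=0$, so your condition would declare them \emph{non}-antipodal. The entire sign computation of $\Q(P_1,P_2)$ that follows is thus aimed at the wrong target (it is closer in spirit to the \emph{next} lemma about triples and negative 2-planes).

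Once the correct meaning is in place, the claim really is immediate, as the paper says: every point of $\widehat{\P}$ is represented by a vector in the closed positive cone $\{\sum_i t_i v_{\varepsilon(i)}:t_i\ge 0\}$ spanned by the basis vectors $v_{\pm 1},\dots,v_{\pm p}$. Since these $2p$ vectors are linearly independent, the only vector in this cone whose negative also lies in the cone is $0$. Hence $[x]$ and $[-x]$ cannot both belong to $\widehat{\P}$.
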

\begin{proof}
    By definition, $\P$ is homeomorphic to the polyhedron of $\R^p$ with vertices $\{\pm e_i\}$ and $p-1$-dimensional faces the simplexes spanned by set of vertices of the form $\{e_{\varepsilon(i)} : \varepsilon(i)\in\{\pm 1\},\ 1\leq i\leq p\}$. Hence it is a $p-1$-dimensional sphere. The last assertion is immediate.
\end{proof}

\begin{lem}
    Let $(p,q,r)$ be a triple of points in $\P$. Then the vector subspace of $\V$ spanned by $\{p,q,r\}$ does not contain a negative definite 2-plane.
\end{lem}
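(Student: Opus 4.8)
The plan is to exploit the explicit basis $(v_1,\dots,v_p,w_1,\dots,w_{q+1-p},v_{-p},\dots,v_{-1})$ fixed above together with the cross-polytope combinatorics of $\widehat{\P}$: three points of $\widehat{\P}$, viewed as vectors of $\V$, turn out to be isotropic and pairwise non-positively paired, and the signature of their span can then be read off a $3\times 3$ Gram matrix.

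First I would record the values of $\Q$ on that basis from the given block matrix: all the $v_{\pm i}$ are isotropic, $\Q(v_i,v_j)=\Q(v_{-i},v_{-j})=0$ for all $i,j$, $\Q(v_i,v_{-j})=0$ for $i\neq j$, $\Q(v_i,v_{-i})=-\tfrac1{2p}$, and $\Q(v_{\pm i},w_l)=0$. A point of $\widehat{\P}$ lies in a face spanned by a choice of one vector from each pair $\{v_i,v_{-i}\}$, hence is represented by a vector $x=\sum_{i=1}^p(a_iv_i+b_iv_{-i})$ with $a_i,b_i\ge 0$ and $a_ib_i=0$ for every $i$. From the table, $\Q(x,x)=-\tfrac1p\sum_i a_ib_i=0$, so every point of $\widehat{\P}$ is isotropic; and for a second such vector $y=\sum_i(c_iv_i+d_iv_{-i})$ one gets $\Q(x,y)=-\tfrac1{2p}\sum_i(a_id_i+b_ic_i)\le 0$, all coefficients being nonnegative.

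Now choose vectors $x,y,z$ of this form representing the three given points and set $W=\mathrm{span}(x,y,z)$. By the previous step the Gram matrix of $(x,y,z)$ is
\[
G=\begin{pmatrix}0&a&b\\ a&0&c\\ b&c&0\end{pmatrix},\qquad a,b,c\le 0 .
\]
It satisfies $\tr(G)=0$ and $\det(G)=2abc\le 0$. If $G$ had two negative eigenvalues $\lambda_1\le\lambda_2<0$, then $\lambda_3=-(\lambda_1+\lambda_2)>0$, so $\lambda_1\lambda_2\lambda_3>0$, contradicting $\det(G)\le 0$; hence $G$ has at most one negative eigenvalue, i.e.\ negative index $\le 1$. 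Since $x,y,z$ span $W$, the form $\Q|_W$ is obtained from the form with Gram matrix $G$ by quotienting out a subspace of its radical (the linear relations among $x,y,z$), an operation that leaves the negative index unchanged; therefore $\Q|_W$ has negative index $\le 1$, and $W$ contains no negative definite $2$-plane, which is the claim.

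The only points needing care are reading off the entries of $G$ correctly in the slightly unusual $\tfrac1{2p}$-normalization, and the elementary inertia bookkeeping in the last step (passing from a spanning set to the span cannot increase the negative index); I do not expect a genuine obstacle.
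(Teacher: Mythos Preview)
Your argument is correct and follows essentially the same route as the paper: represent the three points by isotropic vectors with nonnegative coefficients in the $v_{\pm i}$'s, observe that the resulting $3\times 3$ Gram matrix has zero diagonal and off-diagonals of a fixed sign, and read off the signature. Your use of $\tr(G)=0$ together with $\det(G)\le 0$ to bound the negative index by $1$ is in fact a little more complete than the paper's version, which only explicitly excludes the signature $(1,2)$ case; your treatment of the possibly linearly dependent case via the radical is also a nice touch the paper leaves implicit.
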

\begin{proof}
    Let $p$, $q$ and $r$ be three points in $\P$. By definition, $p$, $q$ and $r$ are semi-lines spanned by vectors $x$, $y$ and $z$, respectively, where
    \[x=\sum_{i=1}^p x_iv_{\varepsilon(i)},\quad y=\sum_{i=1}^p y_iv_{\omega(i)},\quad z=\sum_{i=1}^p z_iv_{\sigma(i)}\ ,\]
    the $x_i, y_i, z_i$ are nonnegative and sum, respectively, to 1, and $\varepsilon(i),\omega(i),\sigma(i)$ are $\pm 1$. Then
    $\scal{x}{y}$, $\scal{y}{z}$ and $\scal{x}{z}$ are nonnegative numbers. We compute the Gram determinant of the family $(x,y,z)$ to obtain
    \[\begin{vmatrix}
        0 & \scal{x}{y} & \scal{x}{z}\\
        \scal{y}{x} & 0 & \scal{y}{z}\\
        \scal{z}{x} & \scal{z}{y} & 0
    \end{vmatrix}=2\scal{x}{y}\scal{x}{z}\scal{y}{z}\leq 0\ .\]
    The subspace of $\V$ spanned by $p$, $q$ and $r$ cannot be of signature $(1,2)$ and hence does not contain a negative definite 2-plane.
\end{proof}
\begin{proof}[Proof of Proposition \ref{proposition polyhedron admissible sphere}]
    By the previous two lemmas, $\P$ is a topological sphere that does not contain photons and no negative triple. The triple $([v_1+v_2],[v_2+v_{-1}],[v_{-1}+v_{-2}])$ span a subspace of signature $(2,1)$.
\end{proof}

By \cite[Theorem A]{sst23}, there is a unique maximal surface in $\hpq$ bounded by $\P$.
We will show that this maximal surface is a pseudo-flat and is indeed an orbit of the Cartan subgroup $\AZ$.

\subsubsection{Orbits of the Cartan subgroup $\AZ$}\label{sub:cartan}

Denote by $x$ the point of $\hpq$ defined by 
\begin{equation}\label{equation definition point x}
    x:=\sum_{i=1}^p(v_i+v_{-i})\ ,
\end{equation}
and by $x_{\mu,\theta}$ the point of $\hpq$ defined, for $\mu=(\mu_1,\ldots,\mu_{q+1-p})$ a unit vector in $\R^{q+1-p}$ and $\theta$ in $[0,2\pi)$, by
\begin{equation}\label{equation definition point x mu theta}
    x_{\mu,\theta}:=\left[\cos(\theta)\sum_{i=1}^p(v_i+v_{-i})+\sin(\theta)\sum_{j=1}^{q+1-p}\mu_jw_j\right]\ .
\end{equation}

Denote by $\Sigma_{\mu,\theta}$ the orbit of the point $x_{\mu,\theta}$ by the action of $\AZ$.

\begin{pro}
    The set $\Sigma_{\mu,\theta}$ is a $p$-dimensional spacelike PMC submanifold of $\hpq$. Its space boundary consists of the polyhedron $\P$. The induced metric is flat and the mean curvature vector at $x_{\mu,\theta}$ equals
    \[H_{x_{\mu,\theta}}= \frac{\sqrt{p}\sin(\theta)^2}{\cos(\theta)} \sum_{i=1}^p(v_i+v_{-i})-p\sin(\theta)\sum_{j=1}^{q+1-p}\mu_jw_j.\]
    If $\theta$ vanishes, then $\Sigma_{\mu,0}$ is the complete maximal submanifold bounded by $\P$.
\end{pro}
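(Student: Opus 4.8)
The strategy is to compute everything explicitly from the definition of $\Sigma_{\mu,\theta}$ as an $\AZ$-orbit, using the Cartan subgroup structure set up in the previous subsections. First I would observe that the point $x_{\mu,\theta}$ decomposes, under the orthogonal splitting $\V = \mathrm{Span}(v_1,v_{-1})\oplus\dots\oplus\mathrm{Span}(v_p,v_{-p})\oplus\mathrm{Span}(w_1,\dots,w_{q+1-p})$, as $\cos\theta\sum_i(v_i+v_{-i}) + \sin\theta\sum_j\mu_j w_j$, and that the one-parameter subgroup $a(tu)$ acts on the $i$-th hyperbolic plane $\mathrm{Span}(v_i,v_{-i})$ by the hyperbolic rotation $\mathrm{diag}(e^{tu_i},e^{-tu_i})$ while fixing the $w_j$ directions. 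Thus $\Sigma_{\mu,\theta}$ is exactly the image of the embedding
\[
    (y_1,\dots,y_p)\ \longmapsto\ \cos\theta\sum_{i=1}^p \sqrt{\tfrac1p}\,y_i\ +\ \sin\theta\sum_{j=1}^{q+1-p}\mu_j w_j,
\]
where each $y_i$ runs over a copy of $\hyp^1\hookrightarrow\mathrm{Span}(v_i,v_{-i})\cong\R^{1,1}$ (after identifying, via the Killing-form homothety of the previous subsection, the $v_i+v_{-i}$/$v_i-v_{-i}$ basis of that plane with a scaled Minkowski basis, so that $v_i+v_{-i}$ has norm $1/p$). Hence $\Sigma_{\mu,\theta}$ is a translate, by the fixed vector $\sin\theta\sum_j\mu_j w_j$, of a product submanifold of $\hyp^{p,q}$ of the "pseudo-flat type'' $\hyp^1\times\dots\times\hyp^1$ — but with a nonstandard normalization encoded by $\theta$.

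Next I would invoke the explicit formulas from Subsection on the geometry of product submanifolds: since all $n_i = 1$ and the coefficients $\alpha_i$ are all equal (to $\cos\theta/\sqrt p$ up to the Killing normalization), the induced metric is flat (each factor $\hyp^1$ is flat and the product metric of flat metrics is flat), which gives the flatness claim. The second fundamental form formula $\sff_x(X_i,X_i) = \frac1{\alpha_i}\big(\sum_{j\neq i}\alpha_j^2\big)x_i - \sum_{j\neq i}\alpha_j x_j$ then feeds directly into the mean curvature vector formula $H_x = \sum_i(\frac{n_i}{\alpha_i} - p\alpha_i)x_i$; plugging $n_i=1$ and the $\theta$-dependent $\alpha_i$, and re-expanding in the original basis $\{v_i,v_{-i},w_j\}$, should reproduce exactly
\[
    H_{x_{\mu,\theta}} = \frac{\sqrt p\,\sin^2\theta}{\cos\theta}\sum_{i=1}^p(v_i+v_{-i}) - p\sin\theta\sum_{j=1}^{q+1-p}\mu_j w_j.
\]
The parallelism of $H$ along the submanifold — hence that $\Sigma_{\mu,\theta}$ is PMC — follows from the corresponding statement for product submanifolds, or directly from $\AZ$-homogeneity (the orbit is homogeneous under $\AZ$, and $H$ is $\AZ$-equivariant). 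For the boundary statement, as $t\to\pm\infty$ along $\gamma_u(t)=a(tu)$ the point $a(tu)\cdot x_{\mu,\theta}$ has its $i$-th hyperbolic coordinate blowing up like $e^{t|u_i|}$ in the direction $v_i$ or $v_{-i}$ (according to the sign of $u_i$), and after projectivizing the limit is a convex combination $\sum_i t_i v_{\varepsilon(i)}$ with $\varepsilon(i)=\mathrm{sign}(u_i)$ — precisely a face of $\widehat\P$; letting $u$ range over $\R^p$ sweeps out all of $\P$, so $\partial\Sigma_{\mu,\theta}=\P$. Finally, when $\theta=0$ the mean curvature vector vanishes identically, so $\Sigma_{\mu,0}$ is a complete maximal submanifold bounded by $\P$; by the uniqueness in Theorem A of \cite{sst23} it is \emph{the} maximal submanifold bounded by $\P$, and it is a pseudo-flat by construction.

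The main obstacle I anticipate is bookkeeping the two competing normalizations: the Killing-form homothety $(\R^p,\langle\cdot,\cdot\rangle)\to(\mathfrak a,K)$ identifies the abstract $\R^p$ with the Cartan subspace, but the quadratic form $\Q$ on each plane $\mathrm{Span}(v_i,v_{-i})$ is represented by $\frac1{2p}J_1$, i.e. is scaled by a factor depending on $p$; keeping track of this factor is exactly what makes the coefficient $\sqrt p$ (rather than $1$) appear in $H$, and getting the normalization of the $\alpha_i$'s right — so that $\sum\alpha_i^2=\cos^2\theta$ rather than $1$, reflecting that $x_{\mu,\theta}$ only has "$\cos\theta$ worth'' of its norm in the hyperbolic directions — is where a careless computation would go wrong. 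Once the dictionary between the $\AZ$-orbit picture and Definition~\ref{de:barbot} is pinned down precisely, the flatness, PMC, mean-curvature, and boundary assertions are all immediate consequences of the product-submanifold computations and the $\theta=0$ specialization together with Theorem A of \cite{sst23}.
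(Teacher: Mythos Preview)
Your approach---recognising $\Sigma_{\mu,\theta}$ as a product of $\hyp^1$'s and importing the ready-made formulas from the product-submanifold subsection---is different from the paper's, which simply computes $\dot c_i$, $\ddot c_i$ and the normal projection directly from the $\AZ$-action. Your route is conceptually cleaner for flatness (a product of one-dimensional factors is trivially flat, whereas the paper runs the Gauss equation) and you actually address the boundary claim $\partial\Sigma_{\mu,\theta}=\P$, which the paper's proof omits.

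There is, however, a genuine gap in your mean-curvature step. For $\theta\ne0$ the orbit $\Sigma_{\mu,\theta}$ is \emph{not} a product submanifold in the sense of Definition~\ref{de:barbot}: the coefficients satisfy $\sum_i\alpha_i^2=\cos^2\theta\ne1$, and the basepoint carries the extra timelike vector $w:=\sin\theta\sum_j\mu_jw_j$. The formula $H_x=\sum_i(n_i/\alpha_i-p\alpha_i)x_i$ was derived using $\sum_i\alpha_i^2=1$; more to the point, since each $x_i\in\mathrm{Span}(v_i,v_{-i})$ has no $w_j$-component, no amount of ``re-expanding in the original basis'' can produce the term $-p\sin\theta\sum_j\mu_jw_j$. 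If you redo the second-fundamental-form computation with the translated basepoint you find $\sff(X_i,X_i)=\tfrac{1}{\alpha_i}x_i-x$ (now $x=\sum_j\alpha_jx_j+w$), whence $H=\sum_i(n_i/\alpha_i-p\alpha_i)x_i-pw$: the translation contributes an extra $-pw$ that your invocation of the product formula misses. Equivalently, you can rescue the formula by formally adjoining a ``trivial'' factor $\hyp^0=\{x_{p+1}\}$ with $n_{p+1}=0$, $\alpha_{p+1}=\sin\theta$, which restores $\sum\alpha_i^2=1$ and supplies the missing $(0/\alpha_{p+1}-p\alpha_{p+1})x_{p+1}=-p\sin\theta\sum_j\mu_jw_j$; but this lies outside Definition~\ref{de:barbot} and needs to be argued, not assumed.
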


\begin{rem}
The submanifold $\Sigma_{\mu,0}$ is a product submanifold with factors $\hyp^1$ hyperbolic spaces of dimension 1.
\end{rem}

\begin{rem}
    We call $\Sigma_{\mu,0}$ a \emph{pseudo-flat} by analogy with maximal flats of a Riemannian symmetric space. Remark that the submanifolds $\Sigma_{\mu,\theta}$ are not totally geodesic, but always isometric to the Euclidean space of dimension $p$.
\end{rem}

\begin{proof}
    Since $\Sigma$ is homogeneous, we only need to make computations at a single point, say at $x_{\mu,\theta}$. Given a vector $u$ in $\R^p$, we can draw the curve $c_u=\gamma_u\cdot x_{\mu,\theta}$ where $\gamma_u$ is the one-parameter subgroup of $\AZ$ defined by Equation \eqref{equation one-parameter subgroup Azero}. We denote by $(e_1,\ldots,e_p)$ the canonical basis of $\R^p$ and by $c_i$ the curve $c_{e_i}$. Given an integer $i$ between $1$ and $p$, $\dot{c}_i(0)=\cos({\theta})(v_i-v_{-i})$, hence the family
    \[\left(\frac{\sqrt{p}}{\cos(\theta)}\dot{c}_i(0)\right)_{i=1}^p\]
    is an orthonormal basis of $ \T_{x_{\mu,\theta}} \Sigma_{\mu,\theta} $ and we see that $\Sigma_{\mu,\theta}$ is spacelike.

    The submanifold $\Sigma_{\mu,\theta}$ is also a PMC submanifold because it is an orbit of a subgroup of $\mathrm{O}(p,q)$, that is the isometry group of $\hpq$.

    For a fixed $i$ between $1$ and $p$, we compute $\sff(\dot{c}_i(0),\dot{c}_i(0))$. To this aim, we compute the orthogonal projection of $\ddot{c}_i(0)$ on $\No_{x_{\mu,\theta}}\Sigma_{\mu,\theta}$. We have
    \[\ddot{c}_i(0)=\cos(\theta)(v_i+v_{-i})\ ,\]
    hence
    \[\scal{\ddot{c}_i(0)}{x_{\mu,\theta}}=-\frac{\cos(\theta)^2}{p}\quad\mbox{and}\quad\scal{\ddot{c}_i(0)}{\dot{c}_j(0)}=0\ .\]
    At the end,
    \begin{equation}\label{equation calcul II_{ii}}
    \sff(\dot{c}_i(0),\dot{c}_i(0))=\ddot{c}_i(0)^\No=\ddot{c}_i(0)-\frac{\scal{\ddot{c}_i(0)}{x_{\mu,\theta}}}{\Q(x_{\mu,\theta})} x_{\mu,\theta}= \ddot{c}_i(0)-\frac{\cos(\theta)^2}{p}x_{\mu,\theta}\ .
    \end{equation}

    Now, if $i$ and $j$ are two distinct integers between $1$ and $p$, then
    \begin{equation}\label{equation calcul II_{ij}}
    \sff(\dot{c}_i(0),\dot{c}_j(0))=0\ .
    \end{equation}
    Indeed, the vector $(v_i-v_{-i})$ is invariant under the action of the one-parameter subgroup $(\gamma_j(t))_{t\in\R}$ of $\AZ$.

    We compute the mean curvature vector by the following formula:
    \[H_{x_{\mu,\theta}} = \sum_{i=1}^p \sff \left(\frac{\sqrt{p}}{\cos(\theta)}\dot{c}_i(0),\frac{\sqrt{p}}{\cos(\theta)}\dot{c}_i(0)\right) = \frac{p}{\cos(\theta)^2} \sum_{i=1}^p \ddot{c}_i(0)^\No\ .\]

    By Equations \eqref{equation calcul II_{ii}},

    \begin{align*}
        \sum_{i=1}^p\sff(\dot{c}_i(0),\dot{c}_i(0)) & = \frac{\cos(\theta)}{\sqrt{p}}\sum_{i=1}^p(v_i+v_{-i})-\cos(\theta)^2\left[\frac{\cos(\theta)}{\sqrt{p}}\sum_{i=1}^p(v_i+v_{-i})+\sin(\theta)\sum_j \mu_jw_j\right]\\
        & = \sin(\theta)^2 \frac{\cos(\theta)}{\sqrt{p}} \sum_{i=1}^p(v_i+v_{-i})-\cos(\theta)^2\sin(\theta)\sum_{j=1}^{q+1-p}\mu_jw_j\ ,
    \end{align*}
    hence
    \[ H_x = \frac{\sqrt{p}\sin(\theta)^2}{\cos(\theta)} \sum_{i=1}^p(v_i+v_{-i})-p\sin(\theta)\sum_{j=1}^{q+1-p}\mu_jw_j\ .\]

    We see from this last formula that when $\theta$ vanishes, $\Sigma_{\mu,\theta}$ is a maximal submanifold.
    
    Now for the curvature assumption, we use \ref{Gauss equation} that says, for $i$ and $j$ two distinct integers between $1$ and $p$ and denoting $e_i=\frac{\sqrt{p}}{\cos(\theta)}\dot{c}_i(0)$,
    \[\sec(e_i,e_j)=-1+\scal{\sff(e_i,e_i)}{\sff(e_j,e_j)}-\scal{\sff(e_i,e_j)}{\sff(e_i,e_j)}\ .\]
    By Equations \eqref{equation calcul II_{ii}} and \eqref{equation calcul II_{ij}}, we obtain
    \[\sec(e_i,e_j)=-1+1-0=0\ .\qedhere\]
\end{proof}

\subsection{Parallel second fundamental form and flat normal bundle}\label{subsec parallel sff}

In this section, we prove that the class of spacelike $p-$submanifolds of $\hpq$ having parallel second fundamental form and normal flat bundle coincides with the class of product submanifolds.

\begin{lem}\label{cor:commutator}
    Let $M$ be a spacelike $p-$submanifold of a spaceform $\X^{p,q}$ of signature $(p,q)$. The following are equivalent:
    \begin{enumerate}
        \item the normal bundle is flat, that is $R^\No =0$;\label{it:normalflat}
        \item for any $U,V$ sections of $\No M$, $B_U$ and $B_V$ commute;\label{it:commute}
        \item for any $x$ in $M$, there exists an orthonormal basis $e_1,\dots,e_p$ such that the matrix of $B_U$ with respect to the basis $e_i$ is diagonal, for any section $U$ of $\No M$.\label{it:simmultdz}
    \end{enumerate}
\end{lem}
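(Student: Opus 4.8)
The plan is to prove the three-way equivalence by showing $(1)\Leftrightarrow(2)$ via the Ricci equation, then $(2)\Leftrightarrow(3)$ via simultaneous diagonalization of commuting self-adjoint operators. Throughout, recall that since $M$ is spacelike of maximal dimension, the normal metric $g_\No$ is negative definite, so $\scal{\cdot}{\cdot}=-g_\No$ is a genuine (positive-definite) inner product on $\No M$, and each shape operator $B_U$ is self-adjoint with respect to the Riemannian metric $g_\T$ on $\T M$.

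\medskip

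\emph{Step 1: $(1)\Leftrightarrow(2)$.} I would read this directly off the \ref{Ricci equation}, which expresses, for $X,Y$ tangent and $U,V$ normal,
\[
g_\No(R^\No_{X,Y}U,V)=g_\T(B_U Y,B_V X)-g_\T(B_U X,B_V Y).
\]
Using self-adjointness of $B_V$, the right-hand side equals $g_\T\big((B_V B_U-B_U B_V)Y,X\big)$, i.e. $g_\T([B_V,B_U]Y,X)$. Hence $R^\No\equiv 0$ if and only if $g_\T([B_V,B_U]Y,X)=0$ for all tangent $X,Y$ and all normal $U,V$; since $g_\T$ is nondegenerate (Riemannian), this is equivalent to $[B_U,B_V]=0$ for all $U,V$. (One should note that it suffices to check this for $U,V$ in a local frame of $\No M$ and extend by $C^\infty(M)$-bilinearity, which is immediate since $B$ is tensorial in the normal argument.)

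\medskip

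\emph{Step 2: $(3)\Rightarrow(2)$.} If at each point there is an orthonormal basis $e_1,\dots,e_p$ of $\T_x M$ diagonalizing every $B_U$, then at that point all the $B_U$ are represented by diagonal matrices, which pairwise commute; hence $[B_U,B_V]=0$ at every point, giving $(2)$.

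\medskip

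\emph{Step 3: $(2)\Rightarrow(3)$.} This is the one genuine point of content, and it is the classical fact that a commuting family of self-adjoint endomorphisms of a finite-dimensional Euclidean space is simultaneously diagonalizable by an orthonormal basis. Fix $x\in M$. The set $\{B_U(x):U\in\No_x M\}$ spans a finite-dimensional commutative subspace $\mathcal{S}$ of the self-adjoint endomorphisms of $(\T_x M,g_\T)$; I would argue by induction on $\dim\T_x M=p$. If every element of $\mathcal S$ is a scalar multiple of the identity, any orthonormal basis works. Otherwise pick $B_{U_0}$ not scalar; decompose $\T_x M=\bigoplus_\lambda E_\lambda$ into its eigenspaces, which are $g_\T$-orthogonal and each of dimension $<p$. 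Since every $B_V$ commutes with $B_{U_0}$, each $E_\lambda$ is $B_V$-invariant, and $B_V|_{E_\lambda}$ is still self-adjoint for $g_\T|_{E_\lambda}$; applying the inductive hypothesis on each $E_\lambda$ and concatenating the resulting orthonormal bases yields the desired common orthonormal eigenbasis. This gives $(3)$, completing the cycle $(1)\Leftrightarrow(2)\Leftrightarrow(3)$.

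\medskip

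The main (really only) obstacle is Step 3: one must be careful that the diagonalizing basis exists \emph{pointwise} with no claim of smoothness in $x$ — which is all the statement asserts — and that the inner product used for self-adjointness is the Riemannian $g_\T$ rather than $g_\No$. Everything else is a formal manipulation of the \ref{Ricci equation}.
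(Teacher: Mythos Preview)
Your proof is correct and follows essentially the same approach as the paper: the paper derives $(1)\Leftrightarrow(2)$ by the identical manipulation of the \ref{Ricci equation} via self-adjointness of the shape operators, and dismisses $(2)\Leftrightarrow(3)$ as ``classical linear algebra,'' which you have simply spelled out in more detail.
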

\begin{proof}
    The equivalency between \eqref{it:normalflat} and \eqref{it:commute} is due to \eqref{Ricci equation}. Indeed, since $B_U$ is a self-adjoint operator with respect to the metric $g_\T$, we get
    \begin{align*}
        g_\No\left(R^\No_{X,Y}U , V\right)&= g_\T\left(B_U(Y) , B_V(X)\right) - g_\T\left(B_U(X) , B_V(Y)\right)\\
        &= g_\T\left(B_V B_U(Y) , X\right) - g_\T\left(X , B_U B_V(Y)\right)\\
        &=g_\T\left(B_V B_U(Y)-B_U B_V(Y), X\right).
    \end{align*}
    
    The equivalency between \eqref{it:commute} and \eqref{it:simmultdz} is classical linear algebra.
\end{proof}

\begin{pro}\label{pro:parallel}
    Let $M$ be a complete spacelike $p-$submanifold in $\hpq$. If the second fundamental form $\sff$ of $M$ is parallel with respect to the normal connection and the normal bundle is flat, then $M$ is a product submanifold.
\end{pro}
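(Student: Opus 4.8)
The plan is to reconstruct $M$ explicitly as a product of hyperbolic spaces embedded in $\hyp^{p,q}$ as in Definition~\ref{de:barbot}. First I would diagonalise the shape operators simultaneously: since $R^\No=0$, Lemma~\ref{cor:commutator} gives at each point an orthonormal basis in which every $B_U$ ($U\in\No M$) is diagonal; grouping the basis by common eigenvalues and using that each eigenvalue depends linearly on $U$, one obtains pairwise distinct normal vectors $\eta_1,\dots,\eta_k$ and an orthogonal decomposition $\T M=V_1\oplus\dots\oplus V_k$ with $\sff(X,Y)=\sum_i\scal{\pi_iX}{\pi_iY}\,\eta_i$, where $\pi_i$ is the projection onto $V_i$; in particular $\operatorname{Image}(\sff)=\operatorname{span}\{\eta_1,\dots,\eta_k\}$, and $\sff(X,Y)=0$ whenever $X\in V_i$, $Y\in V_j$ with $i\neq j$. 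Since $\sff$ is $\nabla^\No$-parallel, $k$ and $n_i:=\dim V_i$ are constant on the connected manifold $M$ and the $V_i$ are smooth subbundles.

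Second, I would upgrade the parallelism of $\sff$ to parallelism of the pieces. Plugging $X\in V_i$, $Y\in V_j$ ($i\neq j$) into $(\nabla_Z\sff)(X,Y)=0$ and comparing with $Z\scal{X}{Y}=0$ forces $\pi_j\nabla_ZX=0$, so each $V_i$ is $\nabla$-parallel; plugging a local unit section $e\in V_i$ into $(\nabla_Z\sff)(e,e)=0$ gives $\nabla^\No_Z\eta_i=0$, so each $\eta_i$ is $\nabla^\No$-parallel. Parallel distributions make $M$ locally a Riemannian product, hence $\sec(e,e')=0$ for unit $e\in V_i$, $e'\in V_j$ with $i\neq j$, whereas \ref{Gauss equation} (whose cross term vanishes) gives $\sec(e,e')=-1+g_\No(\eta_i,\eta_j)$; thus $g_\No(\eta_i,\eta_j)=1$ for $i\neq j$, and in particular $\eta_i=0$ for some $i$ is incompatible with $\eta_j\neq0$ for some $j$. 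Therefore either $\sff\equiv0$ — and then $M$ is a complete totally geodesic spacelike $p$-submanifold, i.e. a copy of $\hyp^p\subset\hyp^{p,q}$, which is a one-factor product submanifold — or every $\eta_i$ is nonzero, so that $g_\No(\eta_i,\eta_i)=\Q(\eta_i,\eta_i)<0$, and \ref{Gauss equation} shows each leaf of $V_i$ has constant curvature $-\mu_i^2<0$ with $\mu_i^2:=1-\Q(\eta_i,\eta_i)>1$; being complete, a leaf of $V_i$ is isometric to the hyperbolic $n_i$-space of curvature $-\mu_i^2$.

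Third, I would recover the ambient embedding. Let $D$ be the flat connection of $M\times\V$ and $x\colon M\to\hyp^{p,q}\subset\V$ the inclusion, so that $D_XY=\nabla_XY+\sff(X,Y)+\scal{X}{Y}x$ and $D_XU=-B_UX+\nabla^\No_XU$. Using $\nabla^\No\eta_i=0$ and the values of $B_{\eta_i}$ on the $V_j$ made explicit by $g_\No(\eta_i,\eta_j)=1$ and $g_\No(\eta_i,\eta_i)=1-\mu_i^2$, one computes
\[
D_X(x+\eta_i)=\mu_i^2\,\pi_iX .
\]
Hence $x+\eta_i$ is constant along the leaves of $V_j$ ($j\neq i$), so its image spans a fixed subspace $W_i\subset\V$; since $D_X(x+\eta_i)=\mu_i^2X$ on $V_i$ and $\Q(x+\eta_i)\equiv-\mu_i^2$, that image is a round totally geodesic copy of the curvature $-\mu_i^2$ hyperbolic $n_i$-space, so $\dim W_i=n_i+1$ and $W_i$ has signature $(n_i,1)$; and $\Q(x+\eta_i,\,x+\eta_j)=-1+g_\No(\eta_i,\eta_j)=0$ for $i\neq j$, so the $W_i$ are $\Q$-orthogonal and $\bigoplus_iW_i$ has signature $(p,k)$. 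Setting $y_i:=\mu_i^{-1}(x+\eta_i)$ (so $\Q(y_i)=-1$, i.e. $y_i\in\hyp^{n_i}\subset W_i$) and $\alpha_i:=\mu_i^{-1}$, it remains to see that $x=\sum_i\alpha_iy_i$ with $\sum_i\alpha_i^2=1$: the vector $\sum_i\mu_i^{-2}(x+\eta_i)-x$ is $D$-parallel, hence a constant $v_0$ that is $\Q$-orthogonal to each $W_i$, and then $\Q(x)\equiv-1$ together with the $\Q$-orthogonality of the $W_i$ and the negative-definiteness of $(\bigoplus_iW_i)^{\perp}$ pin down $\sum_i\mu_i^{-2}=1$ and $v_0=0$. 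This is exactly the description of Definition~\ref{de:barbot}, so $M$ is a product submanifold (and incidentally $M$ lies in the totally geodesic $\hyp^{p,k-1}$ spanned by the $W_i$).

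The first two steps are essentially linear algebra together with Codazzi bookkeeping. I expect the main obstacle to be the last part of Step 3 — the normalization $\sum_i\alpha_i^2=1$, equivalently the vanishing of the translation term $v_0$ — since this is where completeness and the global rigidity provided by $D$-parallelism have to be combined, and where the various orthogonality and sign relations must be pinned down exactly.
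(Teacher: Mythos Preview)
Your approach parallels the paper's: simultaneous diagonalisation of the shape operators, parallel distributions and the de~Rham splitting, and then identification with a product submanifold. Where the paper's Step~4 appeals informally to the fundamental theorem of submanifolds, your Step~3 gives a direct reconstruction inside $\V$ via the maps $m\mapsto(x+\eta_i)(m)$; the identity $D_X(x+\eta_i)=\mu_i^2\,\pi_iX$ is a clean device and makes explicit what the paper's Step~4 only gestures at.

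You are right that the vanishing of $v_0$ is the crux, and your proposed justification does not close. From $v_0\in(\bigoplus_iW_i)^\perp$ you only get $\Q(v_0,v_0)\le0$; expanding $\Q(x,x)=-1$ in the decomposition $x=\sum_i\mu_i^{-2}(x+\eta_i)-v_0$ yields $-\sum_i\mu_i^{-2}+\Q(v_0,v_0)=-1$, while the direct computation of $\Q(v_0,v_0)$ from $v_0=(\sum_i\mu_i^{-2}-1)\,x+\sum_i\mu_i^{-2}\eta_i$ gives back the \emph{same} relation, so nothing forces $\sum_i\mu_i^{-2}=1$. In fact the statement as written admits a counterexample: for $0<\alpha<1$ and $w\in\R^{0,q}$ with $\Q(w)=-(1-\alpha^2)$, the complete spacelike $p$-submanifold $M'=\{\alpha y+w:y\in\hyp^p\}\subset\hyp^{p,q}$ is totally umbilic with parallel $\sff$ and flat normal bundle, yet here $k=1$, $\mu_1=1/\alpha$, $v_0=-w\neq0$, and $M'$ is not a product submanifold in the sense of Definition~\ref{de:barbot}. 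The paper's own Step~4 has the same blind spot. The difficulty disappears under the extra hypothesis $H=0$ (the only case used later): then $\sum_in_i\eta_i=0$, and pairing with $\eta_j$ together with $g_\No(\eta_i,\eta_j)=1$ for $i\neq j$ gives $n_j\mu_j^2=p$, hence $\sum_i\mu_i^{-2}=\sum_in_i/p=1$ and $v_0=0$. So your scheme is correct for maximal submanifolds; in general one must either allow a zero-dimensional factor in Definition~\ref{de:barbot} or add a hypothesis (such as maximality) that forces $v_0=0$.
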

\begin{rem}
    If the second fundamental form of $M$ is parallel, then clearly the mean curvature is parallel, as well.
\end{rem}

\begin{proof}
The proof is divided in four steps: first, we prove that $\nabla^\No\sff=0$ is equivalent to say that $\sff$ commutes with the parallel transport along any curve, hence we can work at a fixed point $x$ of $M$ and then extend the result along $M$. The second step consists in building parallel subbundles, which are morally the eigenspaces of the shape operator: by De Rham decomposition theorem, $M$ is product of the integral submanifolds of these subbundles. Each factor is isometric to the hyperbolic spaces of a suitable dimension, up to a constant conformal factor. Finally, the study of the image of the second fundamental form concludes the proof.
\step{$\sff$ commutes with the parallel transport}\label{step:par1}
    Let $x,y$ be points of $M$ and consider a curve $\gamma\colon[0,1]\to M$ joining them. Denote
    \[P_\gamma\colon\T_x M\to\T_y M\]
    the parallel transport along $\gamma$. We want to prove that for any pair $(v,w)$ of vectors in $\T_x M$, it holds
    \[\sff\left(P_\gamma(v),P_\gamma(w)\right)=P_\gamma\left(\sff(v,w)\right).\]

    Let $X,Y$ be the two parallel vector fields along $\gamma$ such that $X(0)=v$ and $Y(0)=w$, then $\sff(X,Y)$ is parallel along $\gamma$: indeed,
    \[\nabla^\No_{\dot{\gamma}}\left(\sff(X,Y)\right)=(\nabla^\No_{\dot{\gamma}}\sff)(X,Y)+\sff(\nabla_{\dot{\gamma}}X,Y)+\sff(X,\nabla_{\dot{\gamma}}Y),\]
    which vanishes since $\nabla^\No_{\dot{\gamma}}\sff=0$ by hypothesis, while $\nabla_{\dot{\gamma}}X=\nabla_{\dot{\gamma}}Y=0$ by construction.
    
    It follows that
    \[P_\gamma\left(\sff(v,w)\right)=\sff(X(1),Y(1))=\sff\left(P_\gamma(v),P_\gamma(w)\right),\]
    concluding the proof of Step~\ref{step:par1}.
    
    \step{Generalized eigenspaces}\label{step:par2}
    Let us fix $x$ in $M$, and consider $N_1,\dots,N_q$ an orthonormal basis of $\No_x M$. Define
    \[\sff^\alpha:=-g_\No\left(\sff(\cdot,\cdot),N_\alpha\right),\]
    namely $\sff^\alpha$ is the projection of $\sff$ along the direction $N_\alpha$. Denote by $B^\alpha:=B_{N_\alpha}$ the adjoint endomorphism of $\sff^\alpha$, namely
    \[\begin{tikzcd}[row sep=1ex]
        B^\alpha\colon\T_x M\arrow[r] & \T_x M\\
        v\arrow[r,mapsto] & \sum_{i=1}^p \sff^\alpha(v,e_i)e_i,
    \end{tikzcd}\]
    for $e_1,\dots,e_p$ an orthonormal basis of $\T_x M$.

    One can check that $B^\alpha$ is a symmetric endomorphism of $\T_x M$, hence diagonalizable. Denote $V^\alpha_1,\dots,V^\alpha_{n_\alpha}$ the eigenspaces of $B^\alpha$, and consider the set of multi-index
    \[\mathcal{I}=\{1,2,\dots,n_1\}\times\dots\times\{1,2,\dots,n_q\}.\]
    For any multi-index $I=(i_1,\dots,i_q)\in\mathcal{I}$, define $V_I:=V^1_{i_1}\cap\ldots\cap V^q_{i_q}$. 
    
    Since the normal bundle is flat, the $B^\alpha$'s are simultaneously diagonalizable (Lemma~\ref{cor:commutator}). It follows that \[\T_x M=\bigoplus_{I\in\mathcal{I}} V_I.\] One can check that such decomposition does not depend on the choice of the basis $N_\alpha$. Hence, by Step~\ref{step:par1}, one can extend each $V_I$ to a parallel subbundle of $\T M$.
    
    \step{$M$ is a product of hyperbolic spaces}
    Denote $M_I$ the integral submanifold of $V_I$ containing $x$. By De Rham decomposition theorem (see \cite[Chapter~IV, Section~6]{kobanom1}), $M_I$ is totally geodesic and $M$ is isometric to the product of the $M_I$'s. To conclude the proof, it suffices to prove that $M_I$ has negative constant sectional curvature.

    For $I\in\mathcal{I}$, denote $\lambda_I:=(\lambda^1_{i_1},\dots,\lambda^q_{i_q})$, where $\lambda^\alpha_{i_\alpha}$ is the eigenvalue of $B^\alpha$ relative to the eigenspace $V^\alpha_{i_\alpha}$. By definition, for each pair of vectors $(v,w)$ in $\T_x M_I$, one has
    \begin{equation*}
        \sff^\alpha(v,w)=g_\T\left(B^\alpha(v),w\right)=\lambda^\alpha_{i_\alpha}g_\T(v,w).
    \end{equation*}
    If $v,w$ are orthonormal, then
    \begin{equation}\label{eq:gaussparal}
    \begin{split}
    g_\No(\sff(v,w),\sff(v,w))&=-\sum_{\alpha=1}^q \sff^\alpha(v,w)\sff^\alpha(v,w)=0\\
    g_\No(\sff(v,v),\sff(w,w))&=-\sum_{\alpha=1}^q \sff^\alpha(v,v)\sff^\alpha(w,w)=-\sum_{\alpha=1}^q\lambda^\alpha_{i_\alpha}\lambda^\alpha_{i_\alpha}=-\|\lambda_I\|^2,
    \end{split}
    \end{equation}
    for $\|\cdot\|$ the standard norm of $\R^q$. Substituting in \ref{Gauss equation}, one obtains

    \[
    K(v,w)=-1+g_\No\left(\sff(v,v),\sff(w,w)\right)-g_\No\left(\sff(v,w),\sff(v,w)\right)=-1-\|\lambda_I\|^2<0,
    \]
    that is $M_I$ has constant negative sectional curvature, hence it is a copy of $\hyp^{\dim V_I}$, up to the constant conformal factor $(1+\|\lambda_I\|^2)^{-1}$ .
    \step{$M$ has the same second fundamental form as a product submanifold}
    For every unit vector $u$ in $V_I$,
    \[\sff(u,u)=\sum_\alpha(\lambda^\alpha_{i_\alpha} e^\alpha)\ ,\]
    hence the image of $\sff_{|V_I}$ is of dimension 1 and each factor $M_I$ is a totally umbilical submanifold of $\hpq$. Indeed, $M_I$ is a totally umbilic submanifold of $\R^{p,q+1}$, isometric to a hyperbolic space. Such spaces are included in copies of $\R^{p,1}$ in $\R^{p,q+1}$. In particular, $M$ is included in a copy of $\hyp^{p,k-1}$ in $\hpq$.

    If $u,v$ are unit vectors in $\T_xM_I$, $\T_xM_J$, respectively, then by \ref{Gauss equation} we have
    \[g_\No(\sff(u,u),\sff(v,v))=1\ .\]
    
    Now, denoting $N_I$ unit normal vectors in $\No_xM$, the equations
    \begin{align*}
        \scal{N_I}{N_I} & =\scal{\lambda_I}{\lambda_I}^*\\
        \scal{N_I}{N_J} & =1
    \end{align*}
    determine the vectors $N_I$ up to rotations.

    By the fundamental theorem of submanifolds in spaceforms, since the submanifold $M$ has the same metric and second fundamental form as a product submanifold, $M$ is a product submanifold.
\end{proof}


\section{Harmonic forms and Bochner formula}\label{sec: harmonic forms}
In this section, we briefly present the theory of harmonic forms with values in a Riemannian vector bundle. The goal is to deduce a Bochner formula contained in Theorem~\ref{thm:bochner}. 

Let $M$ be a Riemannian manifold, and $\left(E,\scal{\cdot}{\cdot}\right)\to M$ a Riemannian vector bundle over $M$, endowed with a metric connection $\nb$. In the next paragraphs, we define two Laplacian operators over the bundle $E$, present a Weitzenböck decomposition linking these two operators and deduce a Bochner formula. This is classical and can be found in \cite{xin96}.

\subsection{The rough Laplacian}
We have a natural Laplacian on $\Omega^1(M,E)$. Indeed, the Levi-Civita connections $\nabla$ of $M$ and the connection $\nb$ induce a connection on $\Omega^1(M,E)$, still denoted $\nb$. Taking twice the differential of $\alpha$ in $\Omega^1(M,E)$, we obtain a $2-$tensor with values in $\Omega^1(M,E)$. The \emph{rough Laplacian} of $\alpha$ is the trace of such $2-$tensor. We denote the rough Laplacian by $\tr(\nb^2)$. With this convention, the rough Laplacian naturally extends the usual Laplacian on functions, and is a semi-negative operator. If $(e_i)$ is a given local orthonormal frame for $\T M$, and $\alpha$ a 1-form on $M$ with values in $E$, then
\begin{align*}
    \tr(\nb^2(\alpha)) & =\sum_{i=1}^p \nb^2 \alpha(e_i,e_i) \\
    & = \sum_{i=1}^p(\nb_{e_i}\nb\alpha)(e_i)\\
    & = \sum_{i=1}^p\nb_{e_i}\nb_{e_i}\alpha-\nb_{\nabla_{e_i}e_i}\alpha\ .
\end{align*}

\subsection{The Hodge Laplacian}

There is another Laplacian-type operator, denoted by $\Delta$, the \textit{Hodge Laplacian} on $E$. It is defined by
\begin{align*}
    \Delta : \Omega^1(M,E)\ \to\ &  \Omega^1(M,E)\\
    \alpha \ \mapsto\ & \dns\dn\alpha+\dn\dns\alpha,
\end{align*}
where $\dn$ is the exterior covariant derivative associated with $\nb$ and $\dns$ is its formal adjoint with respect to the natural pairing on $\Omega^k(M,E)$. We recall some formulae. If $\alpha$ is a 1-form with values in $E$, we have
\begin{align*}
    \dn \alpha= & \mathcal{A}(\nb\alpha)\ ,\\
    \dns\alpha= & -\tr(\nb\alpha)\ ,\\
    \dn\dns\alpha= & -\nb\tr(\nb\alpha)\ ,
\end{align*}
where $\mathcal{A}$ denotes the antisymmetrization of a covariant $2$-tensor.

To express $\dns\dn\alpha$ we take $(e_i)_{1\leq i\leq p}$ a local orthonormal frame of $\T M$ such that each $e_i$ is parallel at the point $x$. We also take $X$ a local vector field parallel at $x$. Then, at $x$,
\begin{align*}
    \dns\dn\alpha(X) & =\sum_{i=1}^p -(\nb_{e_i}\dn\alpha) (e_i,X)\\
    & =\sum_{i=1}^p -\nb_{e_i}(\dn\alpha(e_i,X))+\dn\alpha(\nabla_{e_i}e_i,X)+\dn\alpha(e_i,\nabla_{e_i}X)\\
    & = \sum_{i=1}^p -\nb_{e_i}((\nb_{e_i}\alpha)(X)-(\nb_X\alpha)(e_i)-\alpha([e_i,X]))\\
    & = \sum_{i=1}^p -(\nb_{e_i}\nb_{e_i}\alpha)(X)+(\nb_{e_i}\nb_X\alpha)(e_i).
\end{align*}

\subsection{The Weitzenböck decomposition of $\Delta$}

We denote by $\bar{R}$ the curvature tensor of the bundle $\mathcal{A}^1(M,E)$ endowed with the tensor product connection $\nb$. We denote by $S$ the following operator on 1-forms with values in $E$:
\[S(\alpha)(X):=\sum_{i=1}^p(\bar{R}_{e_i,X}\alpha)(e_i)\ ,\]
for every vector field $X$ on $M$ and for $(e_i)_{1\leq i\leq p}$ an orthonormal frame of $\T M$.
\begin{pro}[Weitzenböck decomposition]
    Let $\alpha$ be an element of $\Omega^1(M,E)$. Then
    \[\Delta\alpha=-\tr(\nb^2\alpha)+S(\alpha)\ .\]
\end{pro}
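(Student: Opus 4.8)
The plan is to compute $\Delta\alpha=\dns\dn\alpha+\dn\dns\alpha$ pointwise, starting from the formulae already recorded just above, and then to recognize the remaining second-order term as the curvature operator $S$. Fix a point $x\in M$, choose a local orthonormal frame $(e_i)_{1\le i\le p}$ of $\T M$ parallel at $x$, and let $X$ be a local vector field parallel at $x$. All the identities below are to be evaluated at $x$, where $\nabla_{e_i}e_j$, $\nabla_{e_i}X$ and $[e_i,X]=\nabla_{e_i}X-\nabla_Xe_i$ all vanish; since the three operators $\Delta$, $\tr(\nb^2)$ and $S$ are tensorial in $X$ and $x$ is arbitrary, establishing the identity this way suffices.

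First I would handle $\dn\dns\alpha$. As $\dns\alpha=-\tr(\nb\alpha)$ is a section of $E$, its exterior covariant derivative is just $-\nb\tr(\nb\alpha)$, so, evaluating at $X$, writing $\tr(\nb\alpha)=\sum_i(\nb_{e_i}\alpha)(e_i)$ near $x$ and using that the $e_i$ are parallel at $x$,
\[\dn\dns\alpha(X)=-\sum_{i=1}^p(\nb_X\nb_{e_i}\alpha)(e_i)\ .\]
Combining this with the expression
\[\dns\dn\alpha(X)=\sum_{i=1}^p\Big(-(\nb_{e_i}\nb_{e_i}\alpha)(X)+(\nb_{e_i}\nb_X\alpha)(e_i)\Big)\]
already obtained in the excerpt gives
\[\Delta\alpha(X)=-\sum_{i=1}^p(\nb_{e_i}\nb_{e_i}\alpha)(X)+\sum_{i=1}^p\Big((\nb_{e_i}\nb_X\alpha)(e_i)-(\nb_X\nb_{e_i}\alpha)(e_i)\Big)\ .\]
The first sum is precisely $-\tr(\nb^2\alpha)(X)$ at $x$, since the correction term $\nb_{\nabla_{e_i}e_i}\alpha$ in the definition of the rough Laplacian vanishes there.

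For the second sum I would invoke the definition of the curvature $\bar{R}$ of the tensor product connection $\nb$ on $\mathcal{A}^1(M,E)$, namely $\bar{R}_{e_i,X}\alpha=\nb_{e_i}\nb_X\alpha-\nb_X\nb_{e_i}\alpha-\nb_{[e_i,X]}\alpha$. Because $[e_i,X]$ vanishes at $x$, the last term drops out at $x$, hence $(\bar{R}_{e_i,X}\alpha)(e_i)=(\nb_{e_i}\nb_X\alpha)(e_i)-(\nb_X\nb_{e_i}\alpha)(e_i)$ there, and summing over $i$ identifies the second sum with $S(\alpha)(X)$. This gives $\Delta\alpha(X)=-\tr(\nb^2\alpha)(X)+S(\alpha)(X)$ at $x$, and the proof is complete. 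There is no genuine obstacle: the only care needed is in the bookkeeping of the sign conventions for $\dns$, $\Delta$ and $\tr(\nb^2)$ fixed earlier, and in checking that the frame being parallel at $x$ legitimately annihilates the bracket term inside $\bar{R}$, which holds precisely because $\nabla_{e_i}X$ and $\nabla_Xe_i$ both vanish at $x$.
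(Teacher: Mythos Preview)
Your proof is correct and follows essentially the same approach as the paper's own proof: fix a point, choose an orthonormal frame and a vector field parallel there, combine the previously displayed expressions for $\dns\dn\alpha$ and $\dn\dns\alpha$, and identify the resulting commutator of covariant derivatives with the curvature term $S(\alpha)$ using that $[e_i,X]$ vanishes at the point. Your write-up is slightly more explicit about why the bracket term in $\bar R$ drops out and why the correction term in the rough Laplacian vanishes, but the argument is the same.
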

\begin{proof}
    Let $\alpha$ be a section of $\mathcal{A}^1(M,E)$ and $X$ be a vector field on $M$, parallel at $x$. Again, $(e_i)_{1\leq i\leq p} $ is a local orthonormal frame, parallel at $x$.
    
    By the above formulae, we have, at $x$
    \begin{align*}
        \Delta\alpha (X) & = \sum_{i=1}^p-(\nb_{e_i}\nb_{e_i}\alpha)(X)+(\nb_{e_i}\nb_X\alpha)(e_i)-(\nb_X\nb_{e_i}\alpha)(e_i)\\
        & = -\tr(\nb^2\alpha)(X)+\sum_{i=1}^p +(\nb_{e_i}\nb_X\alpha)(e_i)-(\nb_X\nb_{e_i}\alpha)(e_i)\\
        & = -\tr(\nb^2\alpha)(X) + \sum_{i=1}^p (\bar{R}_{e_i,X}\alpha)(e_i)
    \end{align*}
\end{proof}

\subsection{The Bochner formula}

Take $\alpha$ a $1$-form on $M$ with values in $E$. We can compute
\[\Delta\|\alpha\|^2,\]
where $\Delta$ is the usual Laplacian on functions, that coincides with the rough Laplacian on sections of the trivial line bundle $M\times\R$. Using the Weitzenböck decomposition, we obtain
\begin{align*}
    \frac{1}{2} \Delta \|\alpha\|^2 & = \frac{1}{2}\tr (D^2\scal{\alpha}{\alpha})\\
    & = \tr (D \scal{\nb\alpha}{\alpha})\\
    & = \tr \scal{\nb^2\alpha}{\alpha}+\tr \scal{\nb\alpha}{\nb\alpha}\\
    & = \scal{S(\alpha)-\Delta\alpha}{\alpha}+\scal{\nb\alpha}{\nb\alpha}.
\end{align*}
We deduce from this computation the following:

\begin{thm}[Bochner formula]\label{thm:bochner}
    Let $\alpha$ be a harmonic 1-form on $M$ with values in $E$. Then 
    \[\frac{1}{2}\Delta\|\alpha\|^2=\|\nb\alpha\|^2+\langle S(\alpha),\alpha\rangle\ ,\]
    where $\Delta$ is the usual Laplacian on functions.
\end{thm}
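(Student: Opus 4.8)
The plan is simply to differentiate the function $\|\alpha\|^2=\scal{\alpha}{\alpha}$ twice and take a trace, using that the tensor-product connection $\nb$ on $\mathcal{A}^1(M,E)$ is compatible with the metric $\scal{\cdot}{\cdot}$, so that the Leibniz rule applies. Working at an arbitrary point $x$ with a local orthonormal frame $(e_i)_{1\le i\le p}$ of $\T M$ parallel at $x$ — which is harmless since the assertion is pointwise — I would expand $\tfrac12\Delta\|\alpha\|^2=\tfrac12\tr(\nabla^2\scal{\alpha}{\alpha})$. Applying the product rule once turns this into $\tr(\nabla\scal{\nb\alpha}{\alpha})$, and a second application gives $\tr\scal{\nb^2\alpha}{\alpha}+\tr\scal{\nb\alpha}{\nb\alpha}$. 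The last summand is exactly $\|\nb\alpha\|^2$ by definition of the pointwise norm on $\mathcal{A}^1(M,E)$, and in the first summand the trace over the $\T M$-slots can be pulled inside the pairing (whose second argument $\alpha$ is fixed), yielding $\scal{\tr(\nb^2\alpha)}{\alpha}$.

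The next step is to invoke the Weitzenböck decomposition established just above, namely $\tr(\nb^2\alpha)=S(\alpha)-\Delta\alpha$. Substituting, one obtains
\[\tfrac12\Delta\|\alpha\|^2=\|\nb\alpha\|^2+\scal{S(\alpha)}{\alpha}-\scal{\Delta\alpha}{\alpha}.\]
Finally, using the hypothesis that $\alpha$ is harmonic, that is $\Delta\alpha=0$, the last term vanishes and the claimed identity follows.

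I do not expect any genuine obstacle here: the computation is precisely the one already displayed in the paragraph preceding the statement, so the "proof" amounts to recording it and then setting $\Delta\alpha=0$. The only points requiring a moment's care are that $\nb$ must be a metric connection for the Leibniz rule to hold, and that the trace appearing in $\tr\scal{\nb^2\alpha}{\alpha}$ is taken over the same pair of $\T M$-directions that defines the rough Laplacian $\tr(\nb^2\alpha)$, so that the two traces genuinely agree. Reducing to a frame parallel at $x$ makes this identification completely transparent and kills the Christoffel terms that would otherwise clutter the bookkeeping.
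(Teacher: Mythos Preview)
Your proposal is correct and follows exactly the approach of the paper: the computation you outline is precisely the one displayed just before the statement, and the theorem is obtained by setting $\Delta\alpha=0$ in that identity. There is nothing to add.
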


\section{Bochner formula applied to maximal submanifolds}\label{sec:comput}
Let $M$ be a spacelike $p-$submanifold in $\X^{p,q}$, and let $(e_i)_{1\leq i\leq p+q}$ be an orthonormal frame of $\T\X^{p,q}$ along $M$, adapted to $M$ in the following sense:
\begin{itemize}
    \item the vector fields $(e_i)_{1\leq i\leq p}$ are tangent to $M$,
    \item the vector fields $(e_\alpha)_{p+1\leq \alpha\leq p+q}$ are normal to $M$.
\end{itemize}
We use the convention that latin indices $i,j,k$ range from $1$ to $p$ and greek indices $\alpha,\beta,\gamma$ range from $p+1$ to $p+q$.

\begin{de}
    Let $(e_i)_{1\leq i\leq p+q}$ be an orthonormal frame of $\X^{p,q}$ adapted to $M$. The second fundamental form writes as
    \[\sff=\sum h_{ij}^\alpha dx^i\otimes dx^j\otimes e_\alpha.\] We denote $H^\alpha$ the matrix $(h_{ij}^\alpha)_{ij}$ in $\mathcal{M}(p,\R)$, for $\alpha=p+1,\dots, p+q$, namely $H^\alpha=-\scal{\sff}{e_\alpha}$ written in coordinates $e_1,\dots,e_p$.
\end{de}

\subsection{The Laplacian of the scalar curvature}
The main goal of this section is to prove the following formula, from which we deduce the results of this article.
\begin{pro}\label{proposition formula}
    Let $M$ be a complete maximal spacelike $p-$submanifold in a pseudo-Riemannian spaceform $\X^{p,q}(c)$, then
    \begin{equation}\label{eq:II}
    \begin{split}
        \frac{1}{2}\Delta\|\sff\|^2=&\|\nabla\sff\|^2+\sum_{\alpha,\beta=1}^q\|[H^\alpha,H^\beta]\|^2+\sum_{i\ne j=1}^p\sec(e_i,e_j)^2-c\,p\,\Scal+\\
        &+\sum_{i=1}^p\ric(e_i,e_i)^2+\sum_{(i,j)\ne(l,k)}(R_{ijk}^l)^2.
    \end{split}
    \end{equation}
\end{pro}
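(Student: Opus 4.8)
The plan is to apply the Bochner formula of Theorem~\ref{thm:bochner} to the second fundamental form, seen as a $1$-form $\A\in\OTN{M}$, and then to unfold the Weitzenböck curvature term by means of the Gauss, Codazzi and Ricci equations. Since $M$ is maximal, its mean curvature vanishes, so by Corollary~\ref{cor: sff harmonic iff PMC} the form $\A$ is a harmonic $1$-form with values in $E:=\mathrm{Hom}(\T M,\No M)$. As $\A$ and $\sff$ are the same tensor carrying the same connection, we have $\|\A\|^2=\|\sff\|^2$ and $\|\nb\A\|^2=\|\nabla\sff\|^2$, so Theorem~\ref{thm:bochner} becomes
\[
\frac{1}{2}\Delta\|\sff\|^2=\|\nabla\sff\|^2+\scal{S(\A)}{\A},
\]
and it only remains to identify $\scal{S(\A)}{\A}$ with the other terms of \eqref{eq:II}.

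Fix $x\in M$ and orthonormal frames $(e_i)$ of $\T M$ and $(e_\alpha)$ of $\No M$, both parallel at $x$. The connection $\nb$ on $\mathcal{A}^1(M,E)=\T^*M\otimes\T^*M\otimes\No M$ is a tensor product connection, so its curvature $\bar R$ acts as a derivation on the three factors; unfolding $S(\A)(e_k)=\sum_i(\bar R_{e_i,e_k}\A)(e_i)$ one gets, for every $k$ and every $W\in\T M$,
\[
\big(S(\A)(e_k)\big)(W)=\sum_i R^\No_{e_i,e_k}\big(\sff(e_i,W)\big)-\sum_i\sff\big(e_i,R^M_{e_i,e_k}W\big)-\sum_i\sff\big(R^M_{e_i,e_k}e_i,W\big),
\]
where $R^M$ is the curvature tensor of $M$. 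Pairing with $\A(e_k)=\sff(e_k,\cdot)$ and summing over $k$ splits $\scal{S(\A)}{\A}$ into a \emph{normal curvature} contribution (the $R^\No$ terms) and an \emph{intrinsic curvature} contribution (the two groups of $R^M$ terms); note that $\sum_iR^M_{e_i,e_k}e_i$ is, up to sign, the Ricci endomorphism applied to $e_k$.

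For the normal curvature part one writes $\sff(e_i,e_j)=\sum_\gamma h^\gamma_{ij}e_\gamma$ and applies the Ricci equation~\ref{Ricci equation} together with the self-adjointness of the shape operators $H^\alpha$ (as in the proof of Lemma~\ref{cor:commutator}): the normal curvature $R^\No$ is encoded by the commutators $[H^\alpha,H^\beta]$, and a short matrix computation collapses this contribution to the commutator term $\sum_{\alpha,\beta=1}^q\|[H^\alpha,H^\beta]\|^2$ appearing in \eqref{eq:II}. For the intrinsic part, each of the two $R^M$ groups is quadratic in $\sff$ and linear in $R^M$; the key move is to use the Gauss equation~\ref{Gauss equation} to trade every bilinear expression $\scal{\sff(e_a,e_b)}{\sff(e_c,e_d)}$ for $R^M$ together with a Kronecker term carrying the ambient curvature $c$, also invoking the first Bianchi identity, the symmetries of $R^M$ (so that only the antisymmetrised parts of the $\sff$-bilinears survive the contraction against $R^M$), and the maximality identity $\sum_i\sff(e_i,e_i)=0$ to kill the trace terms. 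After the contractions, the intrinsic part reorganizes into $-c\,p\,\Scal+\sum_{i\ne j}\sec(e_i,e_j)^2+\sum_i\ric(e_i,e_i)^2+\sum_{(i,j)\ne(l,k)}(R^l_{ijk})^2$; here it helps to keep in mind the bookkeeping identity $\sum_{i\ne j}\sec(e_i,e_j)^2+\sum_{(i,j)\ne(l,k)}(R^l_{ijk})^2=\|R^M\|^2$, so that the intrinsic Riemann terms simply repackage the full norm of the curvature tensor. Combining the two parts with the displayed Bochner formula gives \eqref{eq:II}.

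The main obstacle is this last step: controlling the combinatorics and the signs of the index contractions when the Gauss equation is substituted into the two intrinsic groups, and checking that all the $c^2$-terms created in the process cancel, leaving precisely the single term $-c\,p\,\Scal$. The cleanest way to manage this is to first reduce both intrinsic groups to fully contracted scalar expressions of the form $\langle R^M,\sff\star\sff\rangle$ and $\langle\ric,\sff\star\sff\rangle$, and only then apply Gauss at that level, so that the output is directly a combination of $\|R^M\|^2$, of Ricci-norm terms, of $\Scal$ and of $c$, rather than substituting term by term. Once \eqref{eq:II} is established, \eqref{eq:trace} translates it freely into the corresponding formula for $\frac{1}{2}\Delta\Scal_M$.
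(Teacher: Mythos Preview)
Your approach is essentially the paper's: Bochner applied to the harmonic form $\A$, then $\scal{S(\A)}{\A}$ split into a normal-curvature piece (Ricci equation $\Rightarrow$ commutator term) and two intrinsic $R^M$ pieces rewritten via Gauss. Two small points of divergence are worth flagging. First, the term $\sum_i\ric(e_i,e_i)^2$ in \eqref{eq:II} is frame-dependent; the paper obtains it by choosing, at the given point, an orthonormal frame $(e_i)$ that \emph{diagonalises} the Ricci tensor, which you should make explicit. Second, your worry about $c^2$-cancellations is an artefact of substituting Gauss simultaneously into both $\sff$-bilinears; the paper avoids this entirely by inserting Gauss into only one factor of each product, so that the intrinsic contribution takes the form $R^M\cdot(R^M-c\,\delta)$ and no $c^2$ ever appears. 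With these two adjustments your outline matches the paper's computation.
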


We proved in Corollary~\ref{cor: sff harmonic iff PMC} that PMC spacelike $p-$submanifolds have harmonic second fundamental form, hence we can apply the Bochner formula described in Theorem~\ref{thm:bochner}. The next paragraphs are devoted to manipulate the formula to obtain geometric information from it.

\subsection{Notations}
Let $(\X^{p,q},g)$ be a pseudo-Riemannian space form, and let $M$ be a spacelike $p-$submanifold of $\X^{p,q}$. We introduce the following notations:
\begin{itemize}
    \item $E$ the vector bundle over $M$ of homomorphisms from the tangent space $\T M$ to the normal space $\No M$,
    \item $\A$ the second fundamental form of $M$ seen as a 1-form with values in $E$,
    \item $\sff$ the second fundamental form of $M$ seen as a symmetric 2-tensor with values in $\No M$,
    \item $H$ the mean curvature vector field, defined as being the trace of $\sff$ with respect to $g_\T$. 
    \item $(e_i)_{i=1}^{p+q}$ is an orthonormal frame of $\T\X^{p,q}_{|M}$ adapted to $M$.
\end{itemize}
The vector bundle $E$ can be rewritten $\T^*M\otimes\No M$. We also have the identification \[\ATN{M}=\T^*M\otimes\T^*M\otimes \No M.\] We introduce the following connections on various bundles over $M$:
\begin{itemize}
    \item $\nabla$ is the Levi-Civita connection of $M$, hence a connection on $\T M$,
    \item $\nabla$ also denotes the dual connection of $\nabla$, hence a connection on $\T^*M$,
    \item $\nabla^\No$ is the normal connection of $M$, hence a connection on $\No M$,
    \item $\nb=\nabla^{\otimes k}\otimes\nabla^\No$ denotes a connection on $\T^*M^{\otimes k}\otimes\No M$.
\end{itemize}

The context will always impose what connection we are considering.

\subsection{Bochner formula}
In Theorem \ref{thm:bochner}, we proved a Bochner formula that can be applied to the second fundamental form $A$, being a harmonic 1-form with values in the unitary bundle $(\mathrm{Hom}(\T M,\No M),\scal{\cdot}{\cdot},\nabla)$. We obtain the following
\begin{equation}
\frac{1}{2}\Delta \|A\|^2=\|\nabla A\|^2+\scal{S(A)}{A}\ ,
\end{equation}
where $S$ is the curvature operator defined by the following formula, for $X$ a vector field on $M$:
\[S(A)(X):=\sum_{i=1}^p\left(\bar{R}_{e_i,X}A\right)(e_i)\ .\]
\subsection{Explicit computations for $\scal{S(\A)}{\A}$}
We now want to compute 
\begin{equation}\label{eq:SA1}
    \scal{S(\A)}{\A}=\sum_{i,j,k}-g_\No\left((\bar{R}_{e_k,e_i}A)(e_k)(e_j)\ ,\ A(e_i)(e_j)\right)\ .
\end{equation}

For this, we need the following general fact. Let $F$ be a vector bundle over $M$ and $\nabla^F$ be a connection on $F$. Denote by $\nabla^{*F}$ the connection induced on $\T^*M\otimes F$ by the Levi-Civita connection $\nabla$ on $\T^*M$ and $\nabla^F$ on $F$. Denote by $R, R^F, R^{*F}$ the curvature tensors of $\nabla$ on $\T M$, $\nabla^F$ on $F$ and $\nabla^{*F}$ on $\T^*M\otimes F$, respectively. Then we have the following formula, for a section $T$ of $\T^*M\otimes F$ and vector fields $X,Y,Z$ on $M$:
    \[(R^{*F}_{X,Y}T)(Z)=R^F_{X,Y}(T(Z))-T(R_{X,Y}Z).\]
Applying twice this formula yields the following.
    \[(\bar{R}_{e_k,e_i}A)(e_k)(e_j)=R^\No _{e_k,e_i}(\sff(e_k,e_j))-\sff(R_{e_k,e_i}e_k,e_j)-\sff(e_k,R_{e_k,e_i}e_j).\]

Combining it with Equation~\eqref{eq:SA1}, we obtain 
\begin{equation}\label{eq:SA2}
    \scal{S(\A)}{\A}=\sum_{i,j,k}\scal{R^\No _{e_k,e_i}(\sff(e_k,e_j))-\sff(R_{e_k,e_i}e_k,e_j)-\sff(e_k,R_{e_k,e_i}e_j)}{\sff(e_i,e_j)}.
\end{equation}

We divide this in three parts to be computed separately. Namely,
\begin{align}\tag{Part 1}\label{equation part 1}
\sum_{i,j,k}\scal{R^\No _{e_k,e_i}(\sff(e_k,e_j))}{\sff(e_i,e_j)}\ ,\\
\tag{Part 2}\label{equation part 2}
\sum_{i,j,k}\scal{\sff(R_{e_k,e_i}e_k,e_j)}{\sff(e_i,e_j)}\ ,\\
\tag{Part 3}\label{equation part 3}
\sum_{i,j,k}\scal{\sff(e_k,R_{e_k,e_i}e_j)}{\sff(e_i,e_j)}\ .
\end{align}

\subsubsection{Shape operator and second fundamental form}

\begin{lem}\label{lemma link g(B,B) with g(II,II)}
Let $U,V$ be normal vector fields and $X,Y$ be tangent vector fields on $M$. Then
\[g_\T(B_U(X),B_V(Y))=\sum_{l=1}^pg_\No(U,\sff(X,e_l))g_\No(V,\sff(Y,e_l))\ .\]
\end{lem}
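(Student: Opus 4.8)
The plan is to unwind both sides of the claimed identity in a local orthonormal frame adapted to $M$ and check they agree term by term. First I would expand the left-hand side: since $B_U$ is the shape operator, $g_\T(B_U(X),Y)=g_\No(\sff(X,Y),U)$ for all tangent $X,Y$ and normal $U$; writing $B_U(X)=\sum_{l=1}^p g_\T(B_U(X),e_l)e_l$ and similarly for $B_V(Y)$, and using that $(e_l)_{1\le l\le p}$ is orthonormal for $g_\T$, one gets
\[
g_\T(B_U(X),B_V(Y))=\sum_{l=1}^p g_\T(B_U(X),e_l)\,g_\T(B_V(Y),e_l).
\]

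Next I would replace each factor using the defining relation between the shape operator and the second fundamental form, namely $g_\T(B_U(X),e_l)=g_\No(\sff(X,e_l),U)=g_\No(U,\sff(X,e_l))$ by symmetry of $g_\No$, and likewise $g_\T(B_V(Y),e_l)=g_\No(V,\sff(Y,e_l))$. Substituting these into the sum yields exactly the right-hand side, so the lemma follows.

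The identity $g_\T(B_U(X),Y)=g_\No(\sff(X,Y),U)$ is the only nontrivial input, and it is the standard relation tying the shape operator to the second fundamental form: it comes from differentiating $g(Y,U)=0$ along $X$ (the Weingarten/Gauss formula), using that $\bar\nabla$ is metric. In the pseudo-Riemannian/spacelike setting of the paper one must track signs — the induced scalar product on the normal bundle is $-g_\No$, positive definite — but this is already folded into the conventions fixed earlier in Section~\ref{section PseudoR spaceforms}, so no new sign subtlety arises. There is essentially no obstacle here; the only thing to be careful about is that the expansion $B_U(X)=\sum_l g_\T(B_U(X),e_l)e_l$ is legitimate because $g_\T$ is positive definite on $\T M$, which is exactly the spacelike hypothesis. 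I would present the argument as a two-line computation as above.
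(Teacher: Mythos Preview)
Your proof is correct and follows exactly the same approach as the paper: expand $B_U(X)$ and $B_V(Y)$ in the orthonormal tangent frame and then use the defining relation $g_\T(B_U(X),Z)=g_\No(U,\sff(X,Z))$. The paper's own proof is simply a two-line sketch of this computation.
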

\begin{proof}
It is a straightforward computation using
\[B_U(X)=\sum_{k=1}^pg_\T(B_U(X),e_i)e_i\ ,\]
and
\[g_T(B_U(X),Z)=g_\No(U,\sff(X,Z))\ .\qedhere\]
\end{proof}

\subsubsection{Computation of \eqref{equation part 1}}

\begin{lem}
\[\sum_{i,j,k}\scal{R^\No _{e_k,e_i}(\sff(e_k,e_j))}{\sff(e_i,e_j)}=\frac{1}{2}\sum_{\alpha,\beta}\|[H^\alpha,H^\beta]\|_2^2 .\]
\end{lem}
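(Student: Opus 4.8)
The plan is to expand the normal curvature term using the Ricci equation and then recognize the commutator structure. First I would write $R^\No_{e_k,e_i}(\sff(e_k,e_j))$ in the orthonormal normal frame $(e_\alpha)$, i.e.\ decompose $\sff(e_k,e_j)=\sum_\gamma g_\No(\sff(e_k,e_j),e_\gamma)\,e_\gamma=-\sum_\gamma h^\gamma_{kj}e_\gamma$, so that the whole sum becomes
\[
\sum_{i,j,k}\sum_\gamma (-h^\gamma_{kj})\,g_\No\!\left(R^\No_{e_k,e_i}e_\gamma,\ \sff(e_i,e_j)\right)
=\sum_{i,j,k}\sum_{\gamma,\delta} h^\gamma_{kj}h^\delta_{ij}\,g_\No\!\left(R^\No_{e_k,e_i}e_\gamma,\ e_\delta\right).
\]
Then I would invoke \ref{Ricci equation}, which gives $g_\No(R^\No_{e_k,e_i}e_\gamma,e_\delta)=g_\T(B_\gamma(e_i),B_\delta(e_k))-g_\T(B_\gamma(e_k),B_\delta(e_i))$, and apply Lemma~\ref{lemma link g(B,B) with g(II,II)} to turn each $g_\T(B_\gamma(\cdot),B_\delta(\cdot))$ into a sum over an index $l$ of products of components $h$. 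At this point everything is expressed purely in terms of the entries $h^\alpha_{ij}$, i.e.\ in terms of the matrices $H^\alpha=(h^\alpha_{ij})$.

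The key step is then to collect the resulting quadruple/quintuple sums into traces of products of the symmetric matrices $H^\alpha$. Using $g_\T(B_\gamma(e_i),B_\delta(e_k))=\sum_l h^\gamma_{il}h^\delta_{kl}=(H^\gamma H^\delta)_{ik}$ (by symmetry of $H^\delta$), the sum over $i,k$ against the remaining factors $h^\gamma_{kj}h^\delta_{ij}$ — which is $(H^\gamma)_{kj}(H^\delta)_{ij}$ — produces, after summing over $j$ as well, expressions of the form $\tr(H^\delta H^\gamma H^\delta H^\gamma)$ and $\tr(H^\gamma H^\delta H^\gamma H^\delta)$ with signs coming from the antisymmetry in the Ricci equation. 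Carefully bookkeeping the four terms (two from the first $B_\gamma B_\delta$ piece, two from the swapped piece) yields a combination of $\tr((H^\alpha H^\beta)^2)$ and $\tr(H^\alpha H^\beta H^\beta H^\alpha)=\tr((H^\alpha)^2(H^\beta)^2)$, which is exactly $-\tfrac12\sum_{\alpha,\beta}\|[H^\alpha,H^\beta]\|_2^2$ up to sign, since $\|[H^\alpha,H^\beta]\|_2^2=\tr([H^\alpha,H^\beta][H^\alpha,H^\beta]^{\!*})=2\tr((H^\alpha)^2(H^\beta)^2)-2\tr(H^\alpha H^\beta H^\alpha H^\beta)$ for symmetric matrices. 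Getting the overall sign to come out to $+\tfrac12$ requires tracking that $g_\No$ is negative definite on $\No M$ (so $g_\No(e_\gamma,e_\gamma)=-1$), which is the source of the sign flips between $\scal{\cdot}{\cdot}$ and $g_\No$.

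The main obstacle I expect is precisely this sign and index bookkeeping: there are two sources of minus signs (the antisymmetrization in the Ricci equation, and the negative-definiteness of $g_\No$ versus the positive-definite $\scal{\cdot}{\cdot}$ used to define $\|\cdot\|$), and one must also be careful that the Ricci equation as stated is antisymmetric in $(e_k,e_i)$ while the summation is symmetric in a way that halves (rather than cancels) the contribution — this is where the factor $\tfrac12$ on the right-hand side originates. I would organize the computation by first substituting the Ricci equation, then immediately applying Lemma~\ref{lemma link g(B,B) with g(II,II)}, then freely relabeling dummy indices to merge the terms, and only at the very end rewrite $2\tr((H^\alpha)^2(H^\beta)^2)-2\tr((H^\alpha H^\beta)^2)$ as $\|[H^\alpha,H^\beta]\|_2^2$. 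No Omori--Yau or completeness input is needed here; this lemma is a purely pointwise algebraic identity.
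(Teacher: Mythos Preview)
Your proposal is correct and follows essentially the same route as the paper: Ricci equation, then Lemma~\ref{lemma link g(B,B) with g(II,II)}, then recognition of the matrix combination $\|H^\alpha H^\beta\|_2^2-\langle H^\alpha H^\beta,H^\beta H^\alpha\rangle_2$ as $\tfrac12\|[H^\alpha,H^\beta]\|_2^2$. The only cosmetic difference is that the paper applies the Ricci equation directly with $U=\sff(e_k,e_j)$ and $V=\sff(e_i,e_j)$ (and then invokes Lemma~\ref{lemma link g(B,B) with g(II,II)}), whereas you first expand $\sff$ in the normal frame and apply Ricci with $U=e_\gamma$, $V=e_\delta$; the resulting index computations are identical.

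One small correction to your commentary: the factor $\tfrac12$ does not arise from an antisymmetry-in-$(e_k,e_i)$ halving of the sum. The sum over all $i,j,k$ and the Ricci equation together already produce exactly $\sum_{\alpha,\beta}\big(\|H^\alpha H^\beta\|_2^2-\langle H^\alpha H^\beta,H^\beta H^\alpha\rangle_2\big)$; the $\tfrac12$ then comes purely from the algebraic identity $\|[H^\alpha,H^\beta]\|_2^2=2\|H^\alpha H^\beta\|_2^2-2\langle H^\alpha H^\beta,H^\beta H^\alpha\rangle_2$ for symmetric matrices.
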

\begin{proof}
By \ref{Ricci equation}, given $i,j,k$ fixed indices,
\[-g_\No\left(R^\No_{e_k,e_i}(\sff(e_k,e_j)),\sff(e_i,e_j)\right)=-g_\T\left(B_{\sff(e_k,e_j)}(e_i),B_{\sff(e_i,e_j)}(e_k)\right)+g_\T\left(B_{\sff(e_k,e_j)}(e_k),B_{\sff(e_i,e_j)}(e_i)\right)\ .\]
Applying Lemma \ref{lemma link g(B,B) with g(II,II)} gives
	\begin{align*}
        g_\T\left( B_{\sff(e_k,e_j)}(e_i) , B_{\sff(e_i,e_j)}(e_k)\right)&=\sum_{l=1}^p g_\No \left( \sff(e_k,e_j),\sff(e_i,e_l)\right) g_\No\left(\sff(e_i,e_j),\sff(e_k,e_l)\right)\\
        &=\sum_{l=1}^p\sum_{\alpha,\beta=1}^q h_{kj}^\alpha h_{il}^\alpha h_{ij}^\beta h_{kl}^\beta\\
        &=\sum_{\alpha,\beta}(H^\alpha H^\beta)_{ik}h_{kj}^\alpha h_{ij}^\beta\ .\qquad \text{($H^\beta$ is symmetric)}
    \end{align*}
Now, taking the sum, we obtain
\begin{align*}
\sum_{i,j,k}g_\T\left( B_{\sff(e_k,e_j)}(e_i) , B_{\sff(e_i,e_j)}(e_k)\right) & = \sum_{i,k}\sum_j g_\T\left( B_{\sff(e_k,e_j)}(e_i) , B_{\sff(e_i,e_j)}(e_k)\right) \\
& = \sum_{i,k} \sum_{\alpha,\beta}(H^\alpha H^\beta)_{ik}(H^\beta H^\alpha)_{ik}\quad\text{($H^\alpha$ is symmetric)}\\
& = \sum_{\alpha,\beta}\scal{H^\alpha H^\beta}{H^\beta H^\alpha}_2\ .
\end{align*}

Similarly,
\[\sum_{i,j,k}g_\T\left(B_{\sff(e_k,e_j)}(e_k),B_{\sff(e_i,e_j)}(e_i)\right)=\sum_{\alpha,\beta}\| H^\alpha H^\beta\|^2_2\ .\]

This finishes the proof, since
\[\|[H^\alpha,H^\beta]\|^2_2=2\left(\|H^\alpha H^\beta\|_2^2-\scal{H^\alpha H^\beta}{H^\beta H^\alpha}_2\right)\ .\qedhere\]
\end{proof}

\subsubsection{Computation of \eqref{equation part 2}}

\begin{lem}
\[\sum_{i,j,k}\scal{\sff(R_{e_k,e_i}e_k,e_j)}{\sff(e_i,e_j)}=c\,(p-1)\,\Scal-\sum_i \ric(e_i,e_i)^2\ .\]
\end{lem}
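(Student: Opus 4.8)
The plan is to expand the ambient curvature term appearing in \eqref{equation part 2} by using the Gauss equation, which expresses the intrinsic curvature tensor $R$ of $M$ in terms of $c$ and the second fundamental form. Concretely, for a spacelike $p$-submanifold of $\X^{p,q}(c)$ one has, for tangent vectors,
\[
g_\T(R_{X,Y}Z,W) = c\bigl(g_\T(X,W)g_\T(Y,Z)-g_\T(Y,W)g_\T(X,Z)\bigr) + g_\No(\sff(X,W),\sff(Y,Z)) - g_\No(\sff(Y,W),\sff(X,Z)).
\]
First I would use this to rewrite $R_{e_k,e_i}e_k$ as a tangential vector field expanded in the frame $(e_l)$, so that the term $\scal{\sff(R_{e_k,e_i}e_k,e_j)}{\sff(e_i,e_j)}$ becomes a sum over $l$ of products of components $h^\alpha_{ab}$ weighted by curvature coefficients $R^l_{kik}$ (equivalently, by the Ricci tensor after summing over $k$). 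Since $M$ is maximal, $H=0$, i.e.\ $\sum_k h^\alpha_{kk}=0$ for all $\alpha$, which should make several terms collapse.

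The key step is to recognize the Ricci tensor on the nose: summing $R_{e_k,e_i}e_k$ over $k$ produces $-\ric(e_i,\cdot)$ (up to sign conventions fixed in the paper), and the remaining traces over $i,j$ of $\scal{\sff(\cdot,e_j)}{\sff(e_i,e_j)}$ assemble into $\sum_i\ric(e_i,e_i)^2$ together with a scalar-curvature term. More precisely, one writes $\ric(e_i,e_i) = c(p-1) + \sum_{k}\bigl(g_\No(\sff(e_i,e_i),\sff(e_k,e_k)) - g_\No(\sff(e_i,e_k),\sff(e_i,e_k))\bigr)$ (this is the traced Gauss equation, using $H=0$ so no $\|H\|^2$ correction) and $\Scal = \sum_i \ric(e_i,e_i)$. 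Plugging the Gauss expansion into Part 2 and carefully contracting indices, the $c$-terms should produce exactly $c\,(p-1)\,\Scal$ — one factor $c$ from the ambient curvature, and a factor $(p-1)$ from the combinatorics of the trace over the remaining index, with $\Scal$ arising as the contraction of the leftover second-fundamental-form pairings — while the purely extrinsic (quartic in $\sff$) terms reorganize into $-\sum_i\ric(e_i,e_i)^2$.

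I expect the main obstacle to be the bookkeeping of which index contractions yield $\ric$ and which yield $\Scal$, and in particular ensuring the signs are consistent with the paper's curvature convention $R_{u,v}w=[\nabla_u,\nabla_v]w-\nabla_{[u,v]}w$ and with the sign in the definition $H^\alpha=-\scal{\sff}{e_\alpha}$. The temptation is to miscount the factor $(p-1)$: it comes from the fact that in $R_{e_k,e_i}e_k$ the "diagonal" term $R_{e_k,e_i}e_k$ with a repeated $e_k$ behaves differently from an off-diagonal one, and after summing over $k$ one Kronecker delta forces $p-1$ surviving terms rather than $p$. A clean way to avoid errors is to first prove the auxiliary identity $\sum_{k}\scal{\sff(R_{e_k,e_i}e_k,e_j)}{\sff(e_i,e_j)} = -\scal{\sff(\ric^\sharp(e_i),e_j)}{\sff(e_i,e_j)}$ (with $\ric^\sharp$ the Ricci endomorphism), and only then use the traced Gauss equation and maximality to identify the remaining double sum over $i,j$ with $c(p-1)\Scal - \sum_i\ric(e_i,e_i)^2$.
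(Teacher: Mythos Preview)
Your plan is correct and coincides with the paper's argument: both recognize that $\sum_k R_{e_k,e_i}e_k$ gives $-\ric^\sharp(e_i)$, and then use the traced Gauss equation (with $H=0$) to convert $\sum_j\scal{\sff(e_l,e_j)}{\sff(e_i,e_j)}$ into $\ric(e_i,e_l)-c(p-1)\delta_{il}$. The one shortcut the paper takes that you do not mention is to choose the frame $(e_i)$ to diagonalize $\ric$ at the point; without this the computation yields $-\|\ric\|^2=-\sum_{i,l}\ric(e_i,e_l)^2$ rather than $-\sum_i\ric(e_i,e_i)^2$, and these only agree in a Ricci-diagonalizing basis (which is implicitly what the lemma's statement assumes).
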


\begin{proof}
By the same arguments as in Lemma \ref{lemma link g(B,B) with g(II,II)}, for every $i,j,k$,
\[g_\No(\sff(R_{e_k,e_i}e_k,e_j),\sff(e_i,e_j))=\sum_l g_\T(R_{e_k,e_i}e_k,e_l)g_\No(\sff(e_i,e_j),\sff(e_j,e_l))\ .\]
Summing over $k$, we obtain
\[\sum_k g_\No(\sff(R_{e_k,e_i}e_k,e_j),\sff(e_i,e_j))=-\sum_l \ric(e_i,e_l)g_\No(\sff(e_i,e_j),\sff(e_j,e_l))\ .\]
The formula we want to prove is tensorial, hence it suffices to prove it at a point $x$ of $M$. Since $\ric_x$ is a $g_\T$-symmetric covariant 2-tensor, we can chose a basis $(e_i)_{i=1}^p$ that diagonalizes it, namely $\ric(e_i,e_j)_x=0$ for every distinct indices $i$ and $j$. We then have
\[\sum_k g_\No(\sff(R_{e_k,e_i}e_k,e_j),\sff(e_i,e_j))=-\ric(e_i,e_i)g_\No(\sff(e_i,e_j),\sff(e_j,e_i))\ .\]
By \ref{Gauss equation},
\[\sum_jg_\No(\sff(e_i,e_j),\sff(e_j,e_i))=-\ric(e_i,e_i)+c\,(p-1)\ ,\]
hence
\begin{align*}
\sum_{i,j,k}g_\No(\sff(R_{e_k,e_i}e_k,e_j),\sff(e_i,e_j)) & =\sum_i (\ric(e_i,e_i)^2-c\,(p-1)\ric(e_i,e_i))\\
& = \sum_i\ric(e_i,e_i)^2-c\,(p-1)\,\Scal\ .\qedhere
\end{align*}
\end{proof}

\subsubsection{Computation of \eqref{equation part 3}}

\begin{lem}
\[\sum_{i,j,k}\scal{\sff(R_{e_k,e_i}e_j,e_k)}{\sff(e_i,e_j)}=-\sum_{i\neq k}sec(e_i,e_k)^2-\sum_{(i,j)\neq(l,k)}\scal{R_{e_k,e_i}e_j}{e_l}^2+c\,\Scal\ .\]
\end{lem}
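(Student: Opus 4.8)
The plan is to reduce the identity, which is tensorial, to a pointwise statement in the adapted orthonormal frame $(e_i)$ and then to feed it into \ref{Gauss equation}. First I would expand the curvature term in the frame: writing $R_{e_k,e_i}e_j=\sum_{l=1}^p\scal{R_{e_k,e_i}e_j}{e_l}e_l$ and using the symmetry of $\sff$, the left--hand side becomes
\[
\sum_{i,j,k,l}\scal{R_{e_k,e_i}e_j}{e_l}\,\scal{\sff(e_l,e_k)}{\sff(e_i,e_j)}\ .
\]
The key point is that $\scal{\sff(e_l,e_k)}{\sff(e_i,e_j)}$ is not, by itself, intrinsic to $M$; only a suitable antisymmetrization of its contraction against the curvature is, and that is exactly what the polarized form of \ref{Gauss equation} provides.

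To exploit this, I would use that the endomorphism $R_{e_k,e_i}$ is skew--adjoint for $g_\T$, so that $\scal{R_{e_k,e_i}e_j}{e_l}$ is antisymmetric under $j\leftrightarrow l$; averaging the sum over this exchange rewrites it as
\[
\tfrac12\sum_{i,j,k,l}\scal{R_{e_k,e_i}e_j}{e_l}\Bigl(\scal{\sff(e_l,e_k)}{\sff(e_i,e_j)}-\scal{\sff(e_j,e_k)}{\sff(e_i,e_l)}\Bigr)\ .
\]
Polarizing \ref{Gauss equation} and evaluating it on the quadruple $(e_k,e_i,e_j,e_l)$ yields precisely
\[
\scal{\sff(e_l,e_k)}{\sff(e_i,e_j)}-\scal{\sff(e_j,e_k)}{\sff(e_i,e_l)}=c\bigl(\delta_{kl}\delta_{ij}-\delta_{kj}\delta_{il}\bigr)-\scal{R_{e_k,e_i}e_j}{e_l}\ ,
\]
so that, after substitution, the sum breaks into a ``$c$--term'' and a ``curvature--squared term''.

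For the $c$--term, contracting the Kronecker deltas leaves $\sum_{i,k}\scal{R_{e_k,e_i}e_i}{e_k}$ and $\sum_{i,k}\scal{R_{e_k,e_i}e_k}{e_i}$, which equal $\Scal$ and $-\Scal$; hence the $c$--term equals $c\,\Scal$. The curvature--squared term is $-\tfrac12\sum_{i,j,k,l}\scal{R_{e_k,e_i}e_j}{e_l}^2$; isolating from this double sum the indices with $i=l$ and $j=k$, whose contribution is exactly $\sum_{i\neq k}\sec(e_i,e_k)^2$, leaves the term $\sum_{(i,j)\neq(l,k)}\scal{R_{e_k,e_i}e_j}{e_l}^2$. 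Collecting the three contributions gives the identity of the statement.

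The step I expect to be most delicate is the bookkeeping around the antisymmetrization and \ref{Gauss equation}: one must keep track of the fact that the normal inner product $g_\No$ is \emph{negative} definite whereas $\scal{\cdot}{\cdot}$ restricted to $\No M$ is its opposite, and one must slot the four vectors into the Gauss equation so that the antisymmetrized $\sff$--contraction matches exactly the combination it controls, with no uncontrolled remainder. A convenient sanity check is to test the formula on a totally geodesic copy of $\hyp^p\subset\hpq$, where the left--hand side vanishes identically; this also pins down the overall normalization of the squared--curvature terms.
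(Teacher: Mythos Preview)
Your approach is correct and is essentially the paper's own: expand $R_{e_k,e_i}e_j$ in the frame, antisymmetrize the sum in $(j,l)$ using the skew--adjointness of $R_{e_k,e_i}$, replace the resulting $\sff$--combination by the polarized \ref{Gauss equation}, and then split off the $c$--contribution (which traces to $c\,\Scal$) from the curvature--squared contribution. The only cosmetic difference is that the paper also antisymmetrizes in $(i,k)$ and works over the range $i<k,\ j<l$, separating the diagonal case $i=j,\ k=l$ from its complement, whereas you keep the full sum; either route leads to the same decomposition, and your caveat about the $g_\No$ versus $\scal{\cdot}{\cdot}$ sign and the final $\tfrac12$--bookkeeping is exactly the point to watch.
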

\begin{proof}
By the same arguments as in Lemma \ref{lemma link g(B,B) with g(II,II)}, for every $i,j,k$,
\[g_\No(\sff(R_{e_k,e_i}e_j,e_k),\sff(e_i,e_j))=\sum_l g_\T(R_{e_k,e_i}e_j,e_l)g_\No(\sff(e_i,e_j),\sff(e_k,e_l))\ .\]
Since $g_\T\left(R_{X,Y}\cdot,\cdot\right)$ is skew-symmetric 
\begin{align*}
        &\sum_{j,l=1}^p g_\T\left(R_{e_k,e_i}e_j,e_l\right) g_\No\left(\sff(e_i,e_j),\sff(e_k,e_l)\right)\\
        &=\sum_{j<l} g_\T\left(R_{e_k,e_i}e_j,e_l\right) g_\No\left(\sff(e_i,e_j),\sff(e_k,e_l)\right) + \sum_{j>l} g_\T\left(R_{e_k,e_i}e_j,e_l\right) g_\No\left(\sff(e_i,e_j),\sff(e_k,e_l)\right)\\
        &=\sum_{j<l} g_\T\left(R_{e_k,e_i}e_j,e_l\right) g_\No\left(\sff(e_i,e_j),\sff(e_k,e_l)\right) + \sum_{j<l} g_\T\left(R_{e_k,e_i}e_l,e_j\right) g_\No\left(\sff(e_i,e_l),\sff(e_k,e_j)\right)\\
        &=\sum_{j<l} g_\T\left(R_{e_k,e_i}e_j,e_l\right)\big(g_\No\left(\sff(e_i,e_j),\sff(e_k,e_l)\right)-g_\No\left(\sff(e_i,e_l),\sff(e_k,e_j)\right)\big).
\end{align*}
Both factors in the last line are skew-symmetric with respect to the variables $(i,k)$. This implies that the formula is invariant by permutation of $(i,k)$, and vanishes for $k=i$. Then,
    \begin{align*}
    &\sum_{i,k=1}^p\sum_{j<l} g_\T\left(R_{e_k,e_i}e_j,e_l\right)\big(g_\No\left(\sff(e_i,e_j),\sff(e_k,e_l)\right)-g_\No\left(\sff(e_i,e_l),\sff(e_k,e_j)\right)\big)\\
    &=2\sum_{i<k}\sum_{j<l} g_\T\left(R_{e_k,e_i}e_j,e_l\right)\big(g_\No\left(\sff(e_i,e_j),\sff(e_k,e_l)\right)-g_\No\left(\sff(e_i,e_l),\sff(e_k,e_j)\right)\big)
    \end{align*}
    
We distinguish two cases: first, consider $i=j<k=l$. In this case,
\begin{align*}
        &2\sum_{i=j<k=l}g_\T\left(R_{e_k,e_i}e_j,e_l\right)\big(g_\No\left(\sff(e_i,e_j),\sff(e_k,e_l)\right)-g_\No\left(\sff(e_i,e_l),\sff(e_k,e_j)\right)\big)\\
        &=2\sum_{i<k}\left( sec(e_i,e_k)^2-c\,sec(e_i,e_k)\right) = \sum_{i\ne k}sec(e_i,e_k)^2-c\,\Scal\ .
\end{align*}
    
Secondly, consider $i\neq j$ or $k\neq l$. By \ref{Gauss equation},
\[g_\No\left(\sff(e_i,e_j),\sff(e_k,e_l)\right)-g_\No\left(\sff(e_i,e_l),\sff(e_k,e_j)\right) =g_\T\left(R_{e_k,e_i}e_j,e_l\right)-g_\T\left(\bar{R}_{e_k,e_i}e_j,e_l\right)\ .\]
In particular, since $\X^{p,q}$ has constant sectional curvature c, for $i\neq k$ and $j\neq l$, we have
    \[\bar{R}_{e_k,e_i}e_j,e_l=c\,\delta_{ij}\delta_{lk}\ .\]
Hence,
    \begin{align*}
        g_\T\left(R_{e_k,e_i}e_j,e_l\right)\big(g_\T\left(R_{e_k,e_i}e_j,e_l\right)-g_\T\left(\bar{R}_{e_k,e_i}e_j,e_l\right)\big)
=g_\T\left(R_{e_k,e_i}e_j,e_l\right)^2\ ,
    \end{align*}
concluding the proof.
\end{proof}

We recall that the restriction of $\scal{\cdot}{\cdot}$ to $\No M$ equals to $-g_\No$, hence combining the computations of \eqref{equation part 1}, \eqref{equation part 2} and \eqref{equation part 3}, we have proven Proposition~\ref{proposition formula}.

\subsection{Maximum principle} We focus on the pseudo-hyperbolic space $\hyp^{p,q}$, that is hereafter $c=-1$. The proofs of Theorem~\ref{pro:scal} and Theorem~\ref{pro:II} are equivalent. Indeed, by tracing twice \ref{Gauss equation}, we have that the scalar curvature and the second fundamental form of spacelike $p-$submanifolds in $\hyp^{p,q}$ are linked by the following relation
\[\Scal=-p(p-1)+\|\sff\|-\|H\|^2.\]
For this reason, we only prove Theorem~\ref{pro:scal}.

We prove the first part of Theorem~\ref{pro:scal}, that we recall now:
\begin{repthmx}{pro:scal}
    Let $M$ be a complete maximal $p-$submanifold in $\hpq$. Then its scalar curvature $\Scal_M$ is non-positive. If $\Scal_M(x)=0$ at a point $x$, then $\Scal_M\equiv0$ and $M$ is a pseudo-flat submanifold.
\end{repthmx}

The proof goes in two steps:\\
First, we use a strong maximum principle, allowed by the compactness of pointed maximal $p$-submanifolds \cite[Proposition 5.1]{sst23}. This shows that the scalar curvature of $M$ is either negative or constantly zero. In the last case, this implies that the normal bundle is flat and that the second fundamental form is parallel.\\
Second, we use our study of product submanifolds to show that if $M$ is scalar-flat, $M$ must be a pseudo-flat.
\begin{proof}
\textbf{Step 1.} By \cite[Proposition 5.1]{sst23}, the group $\mathrm{O}(p,q+1)$ acts cocompactly on the space of pointed $p$-dimensional maximal submanifolds. Hence the continuous function $(M,x)\mapsto\Scal_M(x)$ achieves its maximum at a pointed maximal $p$-submanifold $(M_0,x_0)$.

The second fundamental form of a maximal submanifold is harmonic by Proposition \ref{cor: sff harmonic iff PMC}. We can hence apply the Bochner formula to the norm of the second fundamental form. By \ref{Gauss equation} and Proposition \ref{proposition formula}, we have
\[\Delta\Scal_{M_0}\geq 2p\Scal_{M_0}\ .\]

By the strong maximum principle \cite[Theorem 3.5]{gt}, either $\Scal_{M_0}(x_0)$ is negative or $\Scal_{M_0}\equiv 0$. In particular, the scalar curvature of every maximal $p$-submanifold of $\hpq$ is nonpositive.

\textbf{Step 2.} Suppose that $M$ is a pointed maximal $p$-submanifold such that its scalar curvature is identically $0$. By Proposition \ref{proposition formula}, for every $i\neq j$ and $\alpha\neq \beta$,
\[\|\nabla\sff\|=0,\quad \|[H^\alpha,H^\beta]\|,\quad sec(e_i,e_j)=0\ ,\]
so $M$ is flat and has parallel second fundamental form. Moreover, $M$ has flat normal bundle. Indeed, the terms $[H^\alpha,H^\beta]$ vanishing implies that the shape operators commute, and we conclude with Lemma \ref{cor:commutator}.

Finally, by Proposition \ref{pro:parallel}, $M$ is a product submanifold. The only flat product submanifolds being the pseudo-flats, we conclude that $M$ is a pseudo-flat.
\end{proof}

\section{Applications}\label{sec:applications}
PMC $p-$submanifolds are more difficult to deal with than maximal ones. For that reason, we are not able to give a sharp bound on the second fundamental form of spacelike complete PMC $p-$submanifolds. However, by compactness, we manage to improve the bound given by \cite{cc93} (Proposition \ref{pro:HII}). 

The scalar curvature is a weaker invariant than the Ricci curvature or the sectional curvature. However, the rigidity part of Theorem~\ref{thm:scal} allows to extract informations about those stronger invariants from the scalar curvature of a maximal $p-$submanifold in the pseudo hyperbolic space $\hyp^{p,q}$. The second application shows that in codimension $1$, the sign of scalar curvature determines the sign of the Ricci tensor (\ref{thm:Ricci}).

The third application consists in studying the $\delta-$hyperbolicity and the sectional curvature of complete spacelike maximal $p-$submanifolds in $\hyp^{p,q}$: we show that spacelike complete PMC $p-$submanifolds with small scalar curvature are $\delta-$hyperbolic (Proposition~\ref{lem:delta}), while maximal $p-$submanifolds with big scalar curvature are not $\delta-$hyperbolic (Proposition~\ref{pro:delta}). Unfortunately, the scalar curvature is an invariant too weak to completely characterize the hyperbolicity of maximal $p-$submanifolds (Remark~\ref{rem:delta})

\subsection{PMC submanifolds} We prove Proposition~\ref{pro:HII}, which is a corollary of Theorem~\ref{thm:II} and Proposition~\ref{pro:compact}.

\begin{repprox}{pro:HII}
For every positive number $\varepsilon$, there exists a positive constant $h$ depending on $\varepsilon$ and $p$ with the following properties. For any complete spacelike PMC $p-$submanifold in $\hyp^{p,q}$ with mean curvature $H$ such that $\|H\|\le h$, then
    \[\|\sff\|^2_{M_n}<p(p-1)+\varepsilon.\]
\end{repprox}
\begin{proof}
    Let us fix $\varepsilon>0$. By contradiction, assume that for any $n\in\mathbb{N}$, there exists a spacelike complete PMC $p-$submanifold $M_n$ with mean curvature $H_n$ such that
    \[\|H_n\|<\frac{1}{n},\qquad \|\sff\|^2\ge p(p-1)+\varepsilon. \]
    In particular, for any $n$, we can pick $x_n\in M_n$ such that 
    \begin{equation}\label{eq:HII}
    \|\sff_{M_n}(x_n)\|^2> p(p-1)+\frac{\varepsilon}{2}.
    \end{equation}
    Finally, let us fix a pointed totally geodesic spacelike $p-$submanifold $(x_0,H)$. For any $n\in\N$, let $\phi_n\in\mathrm{Isom}(\hyp^{p,q})$ an isometry such that 
\[\phi_n(x_n)=x_0,\qquad d_{x_n}\phi_n(T_{x_n}M_n)=H.\] 
By Proposition~\ref{pro:compact}, up to extract a subsequence, we can assume \[M_\infty:=\lim_{n\to+\infty}\phi_n(M_n)\] is a complete spacelike PMC $p-$submanifold with mean curvature $H_\infty=\lim_{n\to+\infty} H_n$. 

By continuity, Equation~\eqref{eq:HII} tells us that the norm of the second fundamental form of $M_\infty$ at $x_\infty$ is strictly greater than $p(p-1)$. However, we chose $\|H_n\|<1/n$, hence $M_\infty$ is a maximal $p-$submanifold: this contradicts Theorem~\ref{thm:II}, concluding the proof.
\end{proof}

\subsection{Ricci curvature}
The Lorentzian pseudo-hyperbolic space $\hyp^{p,1}$ is also known as \textit{Anti-de Sitter space}. In Anti-de Sitter spaces, a direct application of Theorem~\ref{thm:scal} is that the Ricci tensor of a maximal hypersurface is negative semi-definite.
\begin{repthmx}{thm:Ricci}
Let $M$ be a properly embedded spacelike maximal hypersurface in the Anti-de Sitter space $\hyp^{p,1}$, then $M$ is Ricci non-positive.

Moreover, if $\ric$ vanishes at a point $v\in\T M$, then $M$ is a product hypersurface $\hyp^{p-1}\times\hyp^{1}$.
\end{repthmx}
\begin{proof}
    Let us fix $x$ in $M$ and consider a unitary vector $v$ in $\T_x M$. By tracing \ref{Gauss equation}, we obtain for $(v_i)_{i=2}^p$ an orthonormal frame of $v^\perp$ in $\T_xM$
    \begin{equation}\label{eq:ricv}
    \ric(v,v)=-(p-1)+\|\sff(v,v)\|^2+\sum_{i=2}^p\|\sff(v,v_i)\|^2\le-(p-1)+\|\sff\|^2\le0,
    \end{equation}
    where the last inequality follows from Theorem~\ref{thm:II}.

    Moreover, if the bound is achieved at $x$, that is $\|\sff_x\|^2=p-1$, the rigidity part of Theorem~\ref{thm:II} implies that $M$ is a product hypersurface.

    One can explicitly compute the shape operator of a maximal product hypersurface $\hyp^{k}\times\hyp^{n-k}$ (see for example \cite[Equation~(18)]{ecrin}), which turns out to be
    \[B=\sqrt{\frac{p-k}{k}}\mathrm{Id}_{k}-\sqrt{\frac{k}{p-k}}\mathrm{Id}_{p-k}.\]

    To conclude, remark that, for any orthonormal basis $v_i$ of $T_x M$, we have
    \[\sum_{i=1}^p\|\sff(v,v_i)\|^2=\|B(v)\|^2\le\|B\|^2_2\|v\|^2,\]
    for $\|\cdot\|_2$ the operator norm.
    
    Comparing with Equation~\eqref{eq:ricv}, the maximum of the Ricci tensor is achieved at the eigenvector relative to the highest eigenvalue of $B$. Without loss of generality, we assume $k\le p/2$, so that the maximum eigenvalue is $(p-k)/k$. Substituting in Equation~\eqref{eq:ricv}, we conclude that
    \[\sup_{T\left(\hyp^k\times\hyp^{p-k}\right)}\ric(v,v)=-(p-1)+\frac{p-k}{k},\]
    which vanishes exactly when $k=1$.
\end{proof}

\subsection{Hyperbolicity} We first recall the basic definitions needed in order to understand the result. A length metric space $(X,d)$ is \textit{geodesic} if for each pair $(x,y)$ of points in $X$, the distance between $x$ and $y$ can be realized by a path, that is there exists an isometry $c\colon\left[0,d(x,y)\right]\to X$ such that $c(0)=x$ and $c\left(d(x,y)\right)=y$. Such path is called a \textit{minimizing geodesic}. Clearly, any connected complete Riemannian manifold is a geodesic metric space for the path distance.

Let $x_1,x_2,x_3$ be three points in a geodesic metric space $(X,d)$. A \textit{geodesic triangle} $T$ with vertex $x_1,x_2,x_3$ is the union of three minimizing geodesics $c_{ij}$ connecting $x_i$ to $x_j$, for $i\ne j$. Such geodesics are called \textit{edges} of the $T$. 

A geodesic triangle $T=c_{12}\cup c_{23}\cup c_{31}$ is $\delta-$thin, for $\delta>0$, if each edge is contained in the $\delta-$neighborhood of the union of the other two. In other words, for each $x\in c_{12}$, there exists $y\in c_{23}\cup c_{31}$ such that $d(x,y)\le\delta$, and the same for the other two edges.
\begin{de}\label{de:gromov-hyperbolic}
    Let $\delta>0$. A geodesic metric space $(X,d)$ is $\delta-$hyperbolic if any geodesic triangle is $\delta-$thin.
\end{de}

\begin{lem}\label{lem:deltaconv}
Let $M_n$ be a sequence of $\delta-$hyperbolic metric space. If $M_\infty$ is the limit of $M_n$ in the Gromov-Hausdorff topology, then $M_\infty$ is $\delta-$hyperbolic.
\end{lem}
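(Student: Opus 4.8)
The plan is to argue directly from the definition of $\delta$-hyperbolicity, transporting a geodesic triangle in the limit space $M_\infty$ back to approximate triangles in the $M_n$ via the maps realizing Gromov--Hausdorff convergence. First I would fix a geodesic triangle $T = c_{12}\cup c_{23}\cup c_{31}$ in $M_\infty$ with vertices $x_1,x_2,x_3$, and a point $x$ on one edge, say $x\in c_{12}$; the goal is to produce $y\in c_{23}\cup c_{31}$ with $d_{M_\infty}(x,y)\le\delta$. Using the Gromov--Hausdorff convergence, for each (small) $\varepsilon>0$ and $n$ large there is an $\varepsilon$-isometry $f_n\colon M_\infty\supset K\to M_n$ defined on a large compact set $K$ containing $T$; one can also go the other way with an $\varepsilon$-isometry $g_n\colon M_n\to M_\infty$ which is an $\varepsilon$-inverse to $f_n$.

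The key steps, in order: (1) push the vertices $f_n(x_i)$ into $M_n$, and connect them by \emph{minimizing} geodesics $c^n_{ij}$ in $M_n$ (here completeness of the $M_n$, or at least that they are geodesic metric spaces, is used, as in the hypothesis of Definition~\ref{de:gromov-hyperbolic}); this gives a genuine geodesic triangle $T^n$ in $M_n$. (2) Apply $\delta$-hyperbolicity of $M_n$: $T^n$ is $\delta$-thin. (3) Take the point $x\in c_{12}$; its image $f_n(x)$ is within $O(\varepsilon)$ of $c^n_{12}$ (because $c^n_{12}$ fellow-travels the $f_n$-image of $c_{12}$ up to an error controlled by $\varepsilon$ and by the stability of geodesics under $\varepsilon$-isometries between geodesic spaces), so there is a point $x'\in c^n_{12}$ with $d_{M_n}(f_n(x),x')=O(\varepsilon)$. (4) By $\delta$-thinness there is $y'\in c^n_{23}\cup c^n_{31}$ with $d_{M_n}(x',y')\le\delta$. (5) Pull $y'$ back via $g_n$: the point $g_n(y')$ lies within $O(\varepsilon)$ of $c_{23}\cup c_{31}$, and $d_{M_\infty}(x, g_n(y'))\le d_{M_n}(f_n(x),y') + O(\varepsilon) \le \delta + O(\varepsilon)$. (6) Let $\varepsilon\to 0$ (for each $\varepsilon$ choosing $n$ large, then extracting a limit point of the resulting $y$'s on the compact set $c_{23}\cup c_{31}$) to obtain an honest $y\in c_{23}\cup c_{31}$ with $d_{M_\infty}(x,y)\le\delta$. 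Repeating for the other two edges shows $T$ is $\delta$-thin, hence $M_\infty$ is $\delta$-hyperbolic.

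The main obstacle is Step (3): controlling how well the $M_n$-geodesic $c^n_{ij}$ approximates the (image of the) $M_\infty$-geodesic $c_{ij}$. An $\varepsilon$-isometry does not send geodesics to geodesics, so one must use a stability statement for geodesics: in a geodesic metric space, a path which is an $(1,O(\varepsilon))$-quasigeodesic with the same endpoints as a minimizing geodesic stays uniformly close to it — but this is \emph{not} true in general metric spaces (it requires hyperbolicity, Morse lemma). Here, however, we only need the weaker fact that any two minimizing geodesics with endpoints $O(\varepsilon)$-close have Hausdorff distance $O(\varepsilon)$ bounded in terms of $\varepsilon$ and the \emph{lengths} involved (not a Morse-type bound), which does hold in arbitrary geodesic spaces by a direct reparametrization-and-triangle-inequality argument; alternatively, one invokes the standard fact that Gromov--Hausdorff limits preserve the property of being a geodesic space and that $\delta$-hyperbolicity is a Gromov--Hausdorff-closed condition, which is classical (e.g.\ in Bridson--Haefliger or Gromov's work). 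Given that this is a known result, in the final write-up it may suffice to cite it; the sketch above indicates the mechanism.
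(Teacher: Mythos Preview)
The paper does not prove this lemma: it is stated without proof and used as a standard fact about Gromov--Hausdorff limits. For the application in Proposition~\ref{pro:delta} only the qualitative conclusion matters anyway, since the limit is $\R^p$, which is not $\delta'$-hyperbolic for any finite $\delta'$. Your fallback suggestion of simply citing the result is therefore exactly what the paper does.

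That said, your detailed sketch has a real gap at Step~(3). The claim that ``any two minimizing geodesics with endpoints $O(\varepsilon)$-close have Hausdorff distance $O(\varepsilon)$ \ldots\ by a direct reparametrization-and-triangle-inequality argument'' is false in arbitrary geodesic spaces: in a theta-graph (two vertices joined by three unit arcs), distinct minimizing geodesics with \emph{identical} endpoints sit at Hausdorff distance $1$. The triangle inequality alone only gives $d(\gamma_1(t),\gamma_2(t))\le\min(2t,2(L-t))+\varepsilon$, which is useless. You dismiss the Morse lemma because ``it requires hyperbolicity'', but $M_n$ \emph{is} $\delta$-hyperbolic by hypothesis, so the Morse lemma is available there and is the standard way to run Step~(3); you have mislocated which space needs to be hyperbolic. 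The cost is that the Morse constant for $(1,\varepsilon)$-quasigeodesics in a $\delta$-hyperbolic space already involves $\delta$, so this route yields $M_\infty$ is $C\delta$-hyperbolic for some universal $C$, not $\delta$-hyperbolic on the nose. To keep the exact constant one usually passes through the four-point inequality (manifestly GH-closed with the same $\delta$) or through ultralimits; alternatively, in the paper's actual setting the convergence comes from Proposition~\ref{pro:compact} and is smooth convergence of Riemannian submanifolds, so geodesics genuinely converge to geodesics and the difficulty disappears.
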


\begin{repprox}{pro:delta}
     Let $M$ a complete maximal $p-$submanifold in $\hyp^{p,q}$. If $M$ is $\delta-$hyperbolic, then there exists a constant $c=c(\delta)>0$ such that 
     \[\Scal_M<-c.\]
\end{repprox}
\begin{proof}
We divide the proof in two steps: first, we prove that if $M$ is $\delta-$hyperbolic, then $\Scal_M$ is uniformly bounded by some negative constant $-c$. Then, we prove that $c$ only depends on $\delta$.

Both proofs are by contradiction: we build a sequence of spacelike complete maximal $p-$submanifold which are $\delta-$hyperbolic, and we prove that such sequence converges (up to a subsequence) to a pseudo-flat, which is not $\delta-$hyperbolic. By Lemma~\ref{lem:deltaconv}, this is not possible, concluding the proof.

\step{Scalar curvature uniformly bounded}
We prove that if $\sup_M\Scal_M=0$, then $M$ is not $\delta-$hyperbolic.

Let $x_n$ a sequence in $M$ such that 
\[\lim_{n\to+\infty}\Scal_M(x_n)=\sup_M\Scal_M=0.\]

Let us fix a pointed spacelike totally geodesic $p-$submanifold $(x_0,H)$. For any $n\in\N$, let $\phi_n\in\mathrm{Isom}(\hyp^{p,q})$ an isometry such that 
\[\phi_n(x_n)=x_0,\qquad d_{x_n}\phi_n(T_{x_n}M)=H.\] 
The sequence $\phi_n(M)$ is then a sequence of spacelike complete maximal $p-$submanifold which are $\delta-$hyperbolic by hypothesis. Up to extracting a subsequence, by Proposition~\ref{pro:compact}, we have that \[M_\infty:=\lim_{n\to+\infty}\phi_n(M)\] is a complete maximal spacelike $p-$submanifold and $\Scal_{M_\infty}(x_0)=0$. By Theorem~\ref{pro:scal}, we have that $\Scal_{M_\infty}\equiv0$ and $M_\infty$ is a pseudo-flat $p-$submanifold, which is isometric to $\R^p$, hence it is not $\delta-$hyperbolic, leading to a contradiction.\cite{ONEILL}

\step{$c=c(\delta)$}
Let us fix $\delta>0$. By contradiction, let $M_n$ be a sequence of $\delta-$hyperbolic $p-$submanifolds in $\hyp^{p,q}$ such that 
\[\sup_{M_n}\Scal_{M_n}\to0.\]
We can build a sequence $x_n\in M_n$ such that
\[\Scal_{M_n}(x_n)>\sup_{M_n}\Scal_{M_n}-\frac{1}{n}.\]
For any $n\in\mathbb{N}$, let $\phi_n\in\mathrm{Isom}(\hyp^{p,q})$ an isometry such that 
\[\phi_n(x_n)=x_0,\qquad d_{x_n}\phi_n(T_{x_n}M_n)=H.\] 
As above, the limit \[M_\infty:=\lim_{n\to+\infty}\phi_n(M_n)\] is a copy of $\R^p$, leading to a contradiction and concluding the proof.
\end{proof}
\begin{rem}\label{rem:delta}
    The converse is false: indeed, all product submanifolds which are not pseudo-flat have strictly negative scalar curvature, and yet they are not $\delta-$hyperbolic, since they contains totally geodesic flat submanifolds. 
\end{rem}

Using Proposition~\ref{pro:compact}, we prove that complete spacelike maximal $p-$submanifolds with sufficiently negative scalar curvature are in fact uniformely negatively curved.
\begin{pro}\label{lem:delta}
    Let $p$ be an integer bigger than 1. There exists a positive constant $\varepsilon$ depending only on $p$ with the following property: let $M$ be a complete spacelike maximal $p-$submanifold of $\hyp^{p,q}$. If \[\sup_M\Scal_M<\varepsilon-p(p-1),\]
    then $M$ is uniformly negatively curved, that is, there exists a positive constant $c$ such that
    \[\sup_{\mathrm{Gr}_{2}(TM)}\sec_M\le-c<0.\]

    In particular, $M$ is $\delta-$hyperbolic, for some positive $\delta$.
\end{pro}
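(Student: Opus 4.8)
The strategy is to combine the rigidity statement of Theorem~\ref{pro:scal} with the compactness result of Proposition~\ref{pro:compact}, via a contradiction-and-rescaling argument, exactly in the spirit of the proof of Proposition~\ref{pro:delta}. The point is that a quantitative gap in the scalar curvature should promote to a quantitative gap in the \emph{sectional} curvature, because the pointed moduli space of complete maximal $p-$submanifolds is compact, and the only way for sectional curvature to fail to be uniformly negative is to degenerate to a limit submanifold with a zero sectional curvature somewhere, which by Gauss would force $\Scal$ of the limit to be too large.

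First I would argue that it suffices to produce a \emph{single} constant $\varepsilon=\varepsilon(p)>0$ and a constant $c=c(p)>0$ such that: whenever $M$ is a complete spacelike maximal $p-$submanifold with $\sup_M\Scal_M < \varepsilon - p(p-1)$, one has $\sup_{\mathrm{Gr}_2(\T M)}\sec_M \le -c$. Assume for contradiction that no such pair works. Then for every $n$ there is a complete maximal $p-$submanifold $M_n$ with $\sup_{M_n}\Scal_{M_n} < \tfrac1n - p(p-1)$ (taking $\varepsilon = 1/n$) and a $2-$plane $\Pi_n \subset \T_{x_n}M_n$ with $\sec_{M_n}(\Pi_n) > -\tfrac1n$. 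Using the transitivity of $\mathrm{Isom}(\hyp^{p,q})$ on pointed totally geodesic spacelike $p-$planes, fix a reference $(x_0,H)$ and choose isometries $\phi_n$ with $\phi_n(x_n)=x_0$ and $d_{x_n}\phi_n(\T_{x_n}M_n)=H$; moreover, by the compactness of the Grassmannian $\mathrm{Gr}_2(H)$ we may assume (after a further subsequence) that $d_{x_n}\phi_n(\Pi_n)\to \Pi_\infty$, a fixed $2-$plane in $H$. By Proposition~\ref{pro:compact} we extract a subsequence so that $M_\infty := \lim_n \phi_n(M_n)$ is a complete maximal spacelike $p-$submanifold through $x_0$ with $\T_{x_0}M_\infty = H$.

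Next I would pass the two curvature inequalities to the limit. The scalar and sectional curvatures depend continuously (in $C^2$, say, which is what Proposition~\ref{pro:compact} provides via elliptic regularity) on the submanifold in the topology of that convergence, so $\Scal_{M_\infty}(x_0) \le 0$ — consistent with Theorem~\ref{pro:scal} — while at the same time $\sec_{M_\infty}(\Pi_\infty) \ge 0$. By the Gauss equation applied at $x_0$ with an orthonormal basis adapted to $\Pi_\infty$, a nonnegative sectional curvature forces, together with $\Scal_{M_\infty}\le 0$, that actually $\Scal_{M_\infty}(x_0)=0$: indeed writing $\Scal_{M_\infty}(x_0)$ as a sum of sectional curvatures, the nonpositivity of each summand (Theorem~\ref{pro:scal} applied after noting the bound holds \emph{pointwise} along $M_\infty$, since every sectional curvature of a complete maximal $p-$submanifold is $\le 0$ by \ref{Gauss equation} combined with Theorem~\ref{pro:II} — this is the content of Theorem~\ref{thm:Ricci}'s argument in codimension one, but here I only need $\sec\le 0$, which follows from $\|\sff\|^2 \le p(p-1)$ and the two-plane Gauss identity) together with one summand being $\ge 0$ forces that summand, hence $\Scal_{M_\infty}(x_0)$, to vanish. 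Then the rigidity clause of Theorem~\ref{pro:scal} gives that $M_\infty$ is a pseudo-flat, hence isometric to $\R^p$; in particular $\sec_{M_\infty}\equiv 0$. This is fine so far; the contradiction must come from the \emph{quantitative} gap.

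The main obstacle is precisely to turn the weak limiting statement ``$\sec_{M_\infty}(\Pi_\infty)=0$ at one point'' into a contradiction with $\sup_{M_n}\Scal_{M_n} < \tfrac1n - p(p-1)$, i.e.\ with $\Scal_{M_n} + p(p-1)$ small. The resolution: from $\sup_{M_n}\Scal_{M_n} + p(p-1) \to 0$ and the pointwise identity $\Scal = -p(p-1) + \|\sff\|^2$ (maximality, see the displayed relation before Theorem~\ref{pro:scal}), we get $\sup_{M_n}\|\sff_{M_n}\|^2 \to 0$, so $\|\sff_{M_\infty}\|^2 \equiv 0$ at $x_0$; but a pseudo-flat has $\|\sff\|^2 = p(p-1) \ne 0$ (its second fundamental form is nonzero, as computed in Section~\ref{section pseudo-flats}), which is the contradiction. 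Wait — more carefully, I should not even need the sectional curvature plane $\Pi_n$ for \emph{this} contradiction; the sectional input is only needed to guarantee $M_\infty$ is a pseudo-flat rather than some other scalar-flat object, but by Theorem~\ref{pro:scal} \emph{every} scalar-flat complete maximal $p-$submanifold is a pseudo-flat, so in fact $\sup_{M_n}\Scal_{M_n}\to 0$ alone already forces $M_\infty$ pseudo-flat, and then $\|\sff_{M_\infty}\|^2 = p(p-1) > 0$ contradicts $\|\sff_{M_\infty}\|^2 = \lim \|\sff_{M_n}(x_n')\|^2$ for suitably chosen $x_n'$ with $\Scal_{M_n}(x_n')\to 0$. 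So the role of the sectional hypothesis and the plane $\Pi_n$ is purely to extract, from the eventual uniform sectional bound $\sec \le -c$, the \emph{quantitative} dependence of $c$ only on $p$: one reruns the same rescaling with $\delta$-independent choices, and the limit being $\R^p$ contradicts $\sec_{M_n}(\Pi_n) > -1/n \to 0$, since a $C^2$-limit of submanifolds with a $2-$plane of curvature bounded below by $-1/n$ has a $2-$plane of curvature $\ge 0$, impossible on $\R^p$ whose curvature is identically zero — no, that's not a contradiction either. The honest contradiction is the one through $\|\sff\|$: on a pseudo-flat $\|\sff\|^2 = p(p-1)$, while the hypothesis forces $\|\sff_{M_n}\| \to \sqrt{p(p-1)-?}$...

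Let me restate cleanly: the one and only contradiction is $0 < p(p-1) = \|\sff_{M_\infty}\|^2 = \lim_n \|\sff_{M_n}(x_n)\|^2 \le \lim_n \big(\sup_{M_n}\Scal_{M_n} + p(p-1)\big) = 0$ — provided $x_n$ is chosen realizing $\Scal_{M_n}(x_n)$ within $1/n$ of its supremum; here I used $\|\sff_{M_n}(x_n)\|^2 = \Scal_{M_n}(x_n) + p(p-1) \le \sup_{M_n}\Scal_{M_n} + p(p-1)$ by maximality. This already gives the existence of $\varepsilon(p)$ and $c(p)$ for the sectional bound by the standard ``if not, extract a bad sequence'' packaging, since once $\|\sff\|$ is uniformly small the Gauss equation gives $\sec = -1 + O(\|\sff\|^2)$ uniformly, hence $\sec \le -1/2$, say, once $\|\sff\|^2 < 1/2$, i.e.\ once $\Scal < 1/2 - p(p-1)$; so one may take $\varepsilon = 1/2$ and $c = 1/2$. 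Thus I expect the actual write-up to be short and the ``obstacle'' to be mostly bookkeeping: verifying that the convergence of Proposition~\ref{pro:compact} is strong enough ($C^2$) that $\|\sff\|$ and $\sec$ pass to the limit, and that the supremum of $\Scal$ (resp.\ of $\|\sff\|$) is attained in the limit along the chosen basepoints. The final sentence ``$M$ is $\delta$-hyperbolic for some $\delta$'' then follows from the classical fact that a complete simply connected Riemannian manifold (maximal $p$-submanifolds are diffeomorphic to $\R^p$, hence simply connected) with sectional curvature $\le -c < 0$ is CAT$(-c)$, hence $\delta$-hyperbolic with $\delta$ depending only on $c$, and here $c = c(p)$.
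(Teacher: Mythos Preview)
Your write-up contains two parallel threads, and they have very different status.

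\textbf{The compactness thread is confused.} Midway through you claim that the limit $M_\infty$ is a pseudo-flat and try to manufacture a contradiction out of $0<p(p-1)=\|\sff_{M_\infty}\|^2=\lim\|\sff_{M_n}(x_n)\|^2\le0$. This is wrong: the hypothesis $\sup_{M_n}\Scal_{M_n}<\tfrac1n-p(p-1)$ says $\sup_{M_n}\|\sff_{M_n}\|^2<\tfrac1n$, so the limit has $\|\sff_{M_\infty}\|\equiv0$, i.e.\ $M_\infty$ is \emph{totally geodesic} (a copy of $\hyp^p$), not a pseudo-flat. Theorem~\ref{pro:scal} is irrelevant here --- it concerns submanifolds with $\Scal=0$, whereas your limit has $\Scal\equiv -p(p-1)$. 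You also assert in passing that ``every sectional curvature of a complete maximal $p$-submanifold is $\le 0$''; this is precisely the open problem discussed in the introduction and is not a consequence of Theorem~\ref{pro:II} (the bound $\|\sff\|^2\le p(p-1)$ only gives $\sec\le -1+\tfrac12 p(p-1)$ via Gauss). Once you recognise that $M_\infty$ is totally geodesic, the contradiction is simply $\sec_{M_\infty}\equiv -1$ against $\sec_{M_\infty}(\Pi_\infty)\ge 0$; this is exactly the paper's proof.

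\textbf{The direct Gauss estimate at the end is correct and in fact cleaner than the paper's argument.} From $\Scal_M<\varepsilon-p(p-1)$ and the maximal identity $\|\sff\|^2=\Scal_M+p(p-1)$ one gets $\|\sff\|^2<\varepsilon$ pointwise; then for orthonormal $u,v$, completing to a frame and applying Cauchy--Schwarz on $\No M$,
\[
\sec(u,v)=-1-\scal{\sff(u,u)}{\sff(v,v)}+\|\sff(u,v)\|^2\le -1+\tfrac12\|\sff\|^2<-1+\tfrac{\varepsilon}{2},
\]
so $\varepsilon=1$, $c=\tfrac12$ works with explicit constants. The paper instead packages the same fact through Proposition~\ref{pro:compact}: it extracts a limit $M_\infty$ with $\Scal_{M_\infty}\equiv -p(p-1)$, hence totally geodesic, hence $\sec_{M_\infty}\equiv-1$, contradicting the persistence of a $2$-plane with $\sec\ge0$. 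Your direct route avoids compactness entirely and yields effective constants; the paper's route is nonconstructive but fits the style used for Propositions~\ref{pro:HII} and~\ref{pro:delta}.
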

\begin{proof}
    By contradiction, let $M_n$ be a sequence of spacelike complete maximal $p-$submanifolds such that
    \begin{align*}
        &\sup_{M_n}\Scal_{M_n}<\frac{1}{n}-p(p-1),\\
        &\sup_{\mathrm{Gr}_{2}(TM_n)}\sec_{M_n}\ge0.
    \end{align*}
    We can build a sequence of points $x_n\in M_n$ such that \[\max_{\mathrm{Gr}_{2}(T_{x_n}M_n)}\sec_{M_n}>-1/n.\] 

    Let us fix a pointed spacelike totally geodesic $p-$submanifold $(x_0,H)$. For any $n\in\N$, let $\phi_n\in\mathrm{Isom}(\hyp^{p,q})$ an isometry such that 
\[\phi_n(x_n)=x_0,\qquad d_{x_n}\phi_n(T_{x_n}M_n)=H.\] 
By Proposition~\ref{pro:compact}, up to extract a subsequence, we can assume that \[M_\infty:=\lim_{n\to+\infty}\phi_n(M_n)\] is a complete spacelike maximal $p-$submanifold with
\begin{equation}\label{eq:secdelta}
    \max_{\mathrm{Gr}_{2}(T_{x_0}M_\infty)}\sec_{M_\infty}\ge0.
\end{equation}
However, $\Scal_{M_\infty}\equiv-p(p-1)$, hence $M_\infty$ is totally geodesic. This contradicts Equation~\eqref{eq:secdelta} and concludes the proof.
\end{proof}
\begin{rem}
    The converse holds if the supremum of the scalar curvature is sufficiently small.
\end{rem}

\printbibliography

\end{document}